\renewcommand{\orcid}[1]{\href{https://orcid.org/#1}{\textcolor[HTML]{A6CE39}{orcid.org/#1}}}
\setlist[enumerate]{leftmargin=.5in}
\setlist[itemize]{leftmargin=.5in}
\title{A study of the local dynamics of modified Patankar DeC and higher order modified Patankar--RK methods\thanks{
\monthyeardate\today 
\corresponding{Philipp \"Offfner} 
\funding{This work was partially supported by the  Deutsche Forschungsgemeinschaft (DFG) through grant ME 1889/10-1 (Izgin) and the Gutenberg Research College, JGU Mainz (\"Offner).}
}}
\author{
Thomas Izgin\thanks{Department of Mathematics, University of Kassel, Germany, (\email{izgin@mathematik.uni-kassel.de}, \orcid{0000-0003-3235-210X}), 
} 
\and 
Philipp \"Offner\thanks{Institute of Mathematics, Johannes Gutenberg University, Mainz, Germany, (\email{poeffner@uni-mainz.de}, \orcid{0000-0002-1367-1917})} 
}
\DeclareMathOperator{\diag}{diag}
\DeclareMathOperator{\Span}{span}
\newtheorem{thm}{Theorem}[section]
\newtheorem{defn}[thm]{Definition}
\newtheorem{prop}[thm]{Proposition}
\newtheorem{rem}[thm]{Remark}
\newtheorem{cor}[thm]{Corollary}
\newcommand{\bp}{\mathbf p}
\newcommand{\bx}{\mathbf x}
\newcommand{\bg}{\mathbf g}
\newcommand{\bA}{\mathbf A}
\newcommand{\bN}{\mathbf N}
\newcommand{\bD}{\mathbf D}
\newcommand{\bI}{\mathbf I}
\newcommand{\bzero}{\mathbf 0}
\newcommand{\bv}{\mathbf v}
\newcommand{\by}{\mathbf y}
\newcommand{\bn}{\mathbf n}
\newcommand{\bbf}{\mathbf f}
\newcommand{\bw}{\mathbf w}
\newcommand{\tm}{\subseteq}
\newcommand{\R}{\mathbb{R}}
\newcommand{\N}{\mathbb{N}}
\newcommand{\C}{\mathbb{C}}
\newcommand{\from}{\colon}
\newcommand{\dt}{\Delta t}
\renewcommand{\prod}{p}
\newcommand{\dest}{d}
\newcommand{\ii}{\mathrm{i}}
\newcommand{\vast}{\bBigg@{3}}
\newcommand{\Vast}{\bBigg@{4}}
\newif\ifcenter@asb@\center@asb@false
\def\center@arstrutbox{%
	\setbox\@arstrutbox\hbox{$\vcenter{\box\@arstrutbox}$}%
}
\newcommand*{\CenteredArraystretch}[1]{%
	\ifcenter@asb@\else
	\pretocmd{\@mkpream}{\center@arstrutbox}{}{}%
	\center@asb@true
	\fi
	\renewcommand{\arraystretch}{#1}%
}
\definecolor{colorA}{rgb}{0,0.447,0.741}
\definecolor{colorB}{rgb}{0.85,0.325,0.098}
\definecolor{colorE}{rgb}{0.929,0.694,0.125}
\definecolor{colorF}{rgb}{0.494,0.184,0.556}
\definecolor{colorD}{rgb}{0.466,0.674,0.188}
\definecolor{colorC}{rgb}{0.301,0.745,0.933}
\definecolor{colorG}{rgb}{0.635,0.078,0.184}
\newcommand{\subalign}[1]{%
	\vcenter{%
		\Let@ \restore@math@cr \default@tag
		\baselineskip\fontdimen10 \scriptfont\tw@
		\advance\baselineskip\fontdimen12 \scriptfont\tw@
		\lineskip\thr@@\fontdimen8 \scriptfont\thr@@
		\lineskiplimit\lineskip
		\ialign{\hfil$\m@th\scriptstyle##$&$\m@th\scriptstyle{}##$\hfil\crcr
			#1\crcr
		}%
	}%
}
\newsavebox{\measure@tikzpicture}
	\def\tikz@width{#1}%
\begin{document}

\maketitle
\begin{abstract}
Patankar schemes have attracted increasing interest in recent years because they preserve the positivity of the analytical solution of a production-destruction system (PDS) irrespective of the chosen time step size. 
Although they are now of great interest, for a long time it was not clear what stability properties such schemes have. Recently a new stability approach based on Lyapunov stability with an extension of the center manifold theorem has been proposed to study the stability properties of positivity-preserving time integrators.  
In this work, we study the stability properties of the classical modified Patankar--Runge--Kutta schemes (MPRK) and the modified Patankar Deferred Correction (MPDeC) approaches. 
We prove that most of the considered MPRK schemes are stable for any time step size and compute the stability function of MPDeC. We investigate its properties numerically revealing that also most MPDeC are stable irrespective of the chosen time step size. Finally, we verify our theoretical results with numerical simulations. 
\end{abstract}

%
%
\section{Introduction}

The derivation of structure preserving methods is essentially important 
and has attracted much attention recently  in various different ways, cf.  \cite{offner2020arbitrary, kopecz2018unconditionally, ranocha2020general,abgrall2022reinterpretation}. 
One specific family of schemes which ensures the positivity and conservation property of the numerical approximations are modified Patankar (MP) schemes. They are constructed 
for conservative and positive production-destruction systems (PDS) which are used to describe chemical reactions or biological models \cite{meister2003high}, but can also be found as the semi-discretization  of  convection equations (with and without) stiff source terms \cite{ciallella2021arbitrary, huang2019third}. Such a PDS  is defined through the following system of ordinary differential equations (ODEs):
\begin{equation}\label{eq:PDS}
	y_i'=P_i(\by) -D_i(\by), \qquad i=1, \dotsc, N
\end{equation}
with $P_i(\by) = \sum_{j=1}^N p_{ij} (\by)$, $D_i(\by) = \sum_{j=1}^N d_{ij} (\by)$, $p_{ij}(\by)=d_{ji}(\by)\geq 0$ and $\by=(y_1, \dotsc, y_N)^T$, where  $y_i$ denotes the $i$-th concentration, e.\,g.\ the density of a chemical or biological component. Further, the production rate $p_{ij}$ denotes the rate of change from the $j$-th component to the $i$-th  component whereas the destruction $d_{ij}$ tells us the change from the $i$-th component to the $j$-th component. 

The PDS is positive if the componentwise inequality $\by(0)>\bzero$ implies $\by(t)> \bzero $ for all $t>0$.
It is easy to see that due to $p_{ij}(\by)=d_{ji}(\by)$, solutions of \eqref{eq:PDS} fulfill additionally the conservation property $\sum_{i=1}^N y_i(t)=\text{const}$ for all $t\geq 0$. 

To mimic conservation and positivity discretely, modified Patankar schemes have been first developed in \cite{meister2003high} and further extended and applied in \cite{kopecz2018unconditionally, kopecz2019existence, meister2014unconditionally}. The proposed families of second- and third-order accurate modified Patankar–Runge–Kutta (MPRK) methods from \cite{kopecz2018unconditionally, kopecz2019existence, meister2014unconditionally} are based on explicit RK schemes. Huang and Shu have used as their starting point for the  MP developments strong stability preserving (SSP) RK schemes in Shu-Osher form introducing SSPMPRK methods \cite{huang2019positive, huang2019third}. However, all of those approaches have maximum order three, whereas  arbitrary high-order MP schemes have later been proposed in \cite{offner2020arbitrary}. These schemes are    based on the Deferred Correction method \cite{dutt2000spectral}. 
All of those MP schemes are unconditionally positive and conservative for $\by(0)>\bzero$.

Although the benefits of MP schemes have been repeatedly demonstrated numerically since their introduction in 2003, an analytical study of their stability and robustness has been lacking until recently. 
In \cite{torlo2021issues} some issues of MP schemes have been analyzed whereas in  \cite{izgin2021recent,IKM2D, IKMSys}, a suitable stability theory for positivity-preserving time integration schemes has been proposed and applied to a family of second order MPRK schemes. In \cite{huang2022stability}, this theory has been used to investigate the stability properties of the SSPMPRK schemes.
In this paper, we follow the lines of 
\cite{IKM2D, IKMSys} and investigate the stability behavior of several MP schemes. We concentrate on the classical MPRK schemes from \cite{kopecz2018unconditionally, kopecz2019existence} and the MPDeC approach up to order 14 \cite{offner2020arbitrary}.

The rest of the paper is organized as follows.  In the next section, we shortly repeat the notation, the main definitions and results regarding the stability of fixed points from \cite{IKMSys}. Next, in Section \ref{se_MPRK} and \ref{se_dec}, we investigate the stability of MP schemes. First, we focus on a highly efficient MPRK scheme proposed in \cite{torlo2021issues} and explain the general setting. Later, we consider the general families of third order MPRK schemes from \cite{kopecz2018unconditionally,kopecz2019existence}, denoted by MPRK43$(\alpha, \beta)$ and MPRK43$(\gamma)$.
In Section \ref{se_dec}, we extend our stability investigation to the MPDeC schemes from \cite{offner2020arbitrary}. We want to mention here, that the resulting stability functions are included in our repository \cite{ourrepo}.
 In numerical simulations, see Section \ref{se_numerics}, we verify our theoretical results. Finally, we formulate our conclusions and future research interests in section \ref{se_conclusion}. In the Appendix \ref{se_Appendix}, we give some additional material for completeness.
\section{Preliminary Results and Notation}
In the following part we shortly introduce the notation and the considered test equation where we also repeat  the preliminary stability results for positivity-preserving schemes following \cite{izgin2021recent,IKM2D, IKMSys}. 
\subsection{Notation and Test Case}
We consider a stable linear positive and conservative PDS of the form 
\begin{equation}\label{eq_basic}
	\by'(t)=\bA\by(t)
\end{equation}
with $\bA \in \R^{N\times N}$ and the initial condition 
\begin{equation}\label{eq_inital}
	\by(0)=\by^0> \mathbf{0}
\end{equation}
following the notation and investigation in 
\cite{huang2022stability, IKMSys}.
In case that we have exactly $k>0$ linear invariants, there exist 
vectors $\bn_1, \dotsc, \bn_k $ which form a basis of the $\mathrm{ker}(\bA^T)$. As a consequence, they fulfill $\bn_i^T\by(t)= \bn_i^T\by^0$ for all $t\geq0$ and $i=1, \dotsc, k$. Note that for conservative PDS we find $\mathbf{1} \in \mathrm{ker}(\bA^T)$, i.\,e.\ the sum of each column vanishes, that is $\sum_{j=1}^N a_{ji} = 0$ for $i=1,\dotsc, N$. Due to a classical result from dynamical system theory \cite{Luenberger1979}, we know that \eqref{eq_basic} is positive if and only if $\bA$ is a Metzler matrix, i.\,e.\ the off-diagonal elements are nonnegative. Altogether, \eqref{eq_basic} can be rewritten as a positive and conservative PDS with $p_{ij}(\by) = d_{ji}(\by) = a_{ij}y_j$ and $p_{ii}(\by)=d_{ii}(\by)=0$  for $i\neq j$ and $i,j=1,\dotsc,N$. 
Moreover,
\begin{equation}\label{eq:-sum(dij)}
	-\sum_{\substack{j=1}}^Nd_{ij}(\by) = -\sum_{\substack{j=1\\j\neq i}}^Na_{ji}y_i = a_{ii}y_i
\end{equation}
holds true, which was already used in \cite{huang2022stability} to rewrite the SSPMPRK schemes in matrix-vector notation when applied to \eqref{eq_basic}. 

In what follows we are interested in comparing the stability properties of steady states of the initial value problem \eqref{eq_basic}, \eqref{eq_inital} with those of the corresponding fixed points of the method. To that end, we recall the definition of (asymptotically) stable fixed points in the next section and refer to the analogue for steady states to \cite{deuflhard2002,SH98}.

To ensure stable steady states $\by^* \in \ker(\bA)$, the matrix $\bA$ has a spectrum \[\sigma(\bA)\tm \C^-=\{z\in \C\mid \Re(z)\leq 0\}\] and the eigenvalues of $\bA$ with zero real part have to be associated to a Jordan block size of 1, cf. \cite[Theorem 2]{deuflhard2002}. Also, according to the same theorem, no steady state of the test equation \eqref{eq_basic} is asymptotically stable since $\ker(\bA)\neq \{\bzero\}$.

To summarize the assumptions on the linear PDS \eqref{eq_basic}, we introduce the  algebraic multiplicity $\mu_\bA(\lambda)$ of the eigenvalue $\lambda\in \sigma(\bA)$ as well as the corresponding geometric multiplicity $\gamma_\bA(\lambda)$. Altogether we consider in the following systems of the form
\begin{equation}\label{eq:PDS_Sys}
	\by'=\bA\by,\quad \bA\neq\bzero,\quad  \bA-\diag(\bA)\geq \bzero,\quad \mu_\bA(0)=\gamma_\bA(0)=k\geq 1,\quad \bm1\in \ker(\bA^T),\quad \sigma(\bA)\tm \C^-,
\end{equation}
where $\diag(\bA)$ denotes the diagonal of $\bA$. 
\begin{rem}
We want to note, that  $\bm1\in\ker(\bA^T)$ and $\bA\neq \bzero$ imply that the matrix $\bA$ is even a proper Metzler matrix, which means that in addition to being a Metzler matrix, $\bA$ possesses at least one negative diagonal element. As a result of \cite[Theorem 10, Corollary 11]{StabMetz}, the only eigenvalue $\lambda\in \sigma(\bA)\tm\C^-$ with $\Re(\lambda)=0$ is $\lambda=0$. Hence, the only eigenvalue for which we require equality of algebraic and geometric multiplicity in \cite[Theorem 2]{deuflhard2002} is $\lambda=0$.
\end{rem}
As a final notation we want to introduce the diagonal matrix $\diag(\by)$ for a given vector $\by\in \R^N$ by $(\diag(\by))_{ii}=y_i$ for $i=1,\dotsc,N$.

\subsection{Stability of Positive and Conservative Schemes}
Here we recall the main theorem from \cite{IKMSys} that we use for the subsequent stability investigation. The theorem provides sufficient conditions for the stability analysis of non-hyperbolic fixed points, i.\,e.\ fixed points $\by^*$ of a map $\bg\colon D\to D$ with $D\tm \R^N$, where the Jacobian $\bD\bg(\by^*)$ of $\bg$ at $\by^*$ possesses at least one eigenvalue $\lambda$ with $\lvert\lambda\rvert=1.$ We consider the following stability notion of $\by^*$.
\begin{defn}\label{Def_Lyapunov_Diskr}
	Let $\by^*$ be a fixed point of an iteration scheme $\by^{n+1}=\bg(\by^n)$, that is $\by^*=\bg(\by^*)$. 
	\begin{enumerate}
		\item\label{def:stab} $\by^*$ is called \emph{Lyapunov stable} if, for any $\epsilon>0$, there exists a $\delta=\delta(\epsilon)>0$ such that $\norm{\by^0-\by^*}<\delta$ implies $\norm{\by^n- \by^*}<\epsilon$ for all $n\geq 0$.
		\item If in addition to \ref{def:stab}, there exists a constant $c>0$ such that $\Vert \by^0-\by^*\Vert<c$ implies $\Vert \by^n-\by^*\Vert \to 0$ for $n\to \infty$, we call $\by^*$ \emph{asymptotically stable.}
		\item A fixed point that is not Lyapunov stable is said to be \emph{unstable}.
	\end{enumerate}
\end{defn}
In the following we will also just speak of stable fixed points. A common result concerning the stability of fixed points is the following.
\begin{thm}[{\cite[Theorem 1.3.7]{SH98}}]\label{Thm:_Asym_und_Instabil}
	Let  $\by^{n+1}=\bg(\by^n)$ be an iteration scheme with fixed point $\by^*$. Suppose the Jacobian $\bD\bg(\by^*)$ exists and denote its spectral radius by $\rho(\bD\bg(\by^*))$. Then
	\begin{enumerate}
		\item $\by^*$ is asymptotically stable if $\rho(\bD\bg(\by^*))<1$. 
		\item $\by^*$ is unstable if $\rho(\bD\bg(\by^*))>1$.
	\end{enumerate}
\end{thm}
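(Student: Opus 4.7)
The plan is to reduce the stability question to spectral properties of the linearization $\bD\bg(\by^*)$ via a first-order Taylor expansion $\bg(\by)=\by^*+\bD\bg(\by^*)(\by-\by^*)+\bR(\by)$ with $\|\bR(\by)\|=o(\|\by-\by^*\|)$, and then to exploit the standard lemma that for any matrix $\bB\in\R^{N\times N}$ and any $\varepsilon>0$ there is a submultiplicative norm on $\R^N$ with $\|\bB\|\leq \rho(\bB)+\varepsilon$. This norm-tuning fact is what translates a purely spectral hypothesis into a bona fide contraction or expansion estimate.

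For the first assertion, assuming $\rho(\bD\bg(\by^*))<1$, I would choose $\varepsilon>0$ small enough that $c:=\rho(\bD\bg(\by^*))+\varepsilon<1$ and pick the above norm for $\bB=\bD\bg(\by^*)$. Using the $o(\cdot)$-control on $\bR$, there is a neighborhood $U$ of $\by^*$ on which $\|\bR(\by)\|\leq \tfrac{1-c}{2}\|\by-\by^*\|$, yielding $\|\bg(\by)-\by^*\|\leq \tfrac{1+c}{2}\|\by-\by^*\|$ on $U$. Since $U$ is forward-invariant under this inequality, a direct induction gives $\|\by^n-\by^*\|\leq \bigl(\tfrac{1+c}{2}\bigr)^n\|\by^0-\by^*\|$ for every $\by^0\in U$, which simultaneously establishes Lyapunov stability (pick $\delta=\delta(\varepsilon)$ so that the ball $B_\delta(\by^*)$ lies in $U$ and is mapped into $B_\varepsilon(\by^*)$) and the asymptotic decay $\by^n\to\by^*$.

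For the second assertion, assuming $\rho(\bD\bg(\by^*))>1$, I would split $\R^N$ into two $\bD\bg(\by^*)$-invariant real subspaces corresponding to the spectral sets $\{\lvert\mu\rvert>1\}$ and $\{\lvert\mu\rvert\leq 1\}$, with associated invariant projector $\PR$. On $\ran(\PR)$ the restriction of $\bD\bg(\by^*)$ is invertible with all eigenvalues of modulus strictly larger than $1$, so applying the same norm lemma to the inverse of this restriction yields an adapted norm and a constant $\kappa>1$ with $\|\PR\bD\bg(\by^*)\by\|\geq \kappa\|\PR\by\|$. Using the Taylor expansion together with this lower bound, one shows that along any orbit starting at some $\by^0$ arbitrarily close to $\by^*$ with $\PR(\by^0-\by^*)\neq\bzero$, the component $\|\PR(\by^n-\by^*)\|$ grows geometrically as long as the orbit remains in a small ball where $\bR$ is dominated by the expansion factor. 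Consequently $\by^n$ must leave every prescribed small neighborhood of $\by^*$ in finitely many steps, proving instability.

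The main obstacle is making the second part rigorous: unlike the contractive case, one cannot simply iterate a global estimate, because trajectories may initially drift towards $\by^*$ along the stable directions before being expelled along the unstable ones. The standard way around this is to introduce a Lyapunov-type quadratic functional adapted to the splitting, for example $V(\by)=\|\PR(\by-\by^*)\|^2-\|(\bI-\PR)(\by-\by^*)\|^2$ in the adapted norm, and to prove an inequality of the form $V(\bg(\by))\geq qV(\by)$ with $q>1$ on a suitable cone about the unstable subspace. This cone argument controls the coupling between stable and unstable modes introduced by $\bR$ and rules out the possibility that orbits remain trapped in a neighborhood of $\by^*$.
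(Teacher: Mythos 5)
This theorem is not proved in the paper at all: it is quoted from the literature (cited as [SH98, Theorem~1.3.7]) and used as a black box, so there is no in-paper argument to compare yours against. Your proposal is the standard linearization proof of this classical result and is essentially sound. Part~1 is complete as written: differentiability of $\bg$ at $\by^*$ gives exactly the $o(\norm{\by-\by^*})$ remainder you invoke, and the adapted-norm lemma converts $\rho(\bD\bg(\by^*))<1$ into a genuine contraction factor $\tfrac{1+c}{2}<1$ on a forward-invariant ball, from which both Lyapunov and asymptotic stability follow. Part~2 is a correct plan but still a sketch; the cone/Chetaev-type argument you describe is indeed the standard way to finish, and the one point that needs explicit care is that the complementary spectral set $\{\lvert\mu\rvert\le 1\}$ may contain eigenvalues of modulus exactly $1$, so the adapted norm on that subspace only gives a growth bound $1+\varepsilon'$ rather than a contraction; you must choose $\varepsilon'$ small enough that the expansion rate $\kappa>1$ on $\ran(\PR)$ strictly dominates $1+\varepsilon'$, which is precisely what makes the cone $\{V\ge 0\}$ forward-invariant under the perturbed map. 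The escape conclusion is then cleanest by contradiction: if an orbit with $V(\by^0)>0$ (e.g.\ $\by^0=\by^*+t\bv$ with $\bv$ in the unstable subspace) remained in a small ball for all $n$, then $V(\by^n)\ge q^nV(\by^0)\to\infty$ would contradict the boundedness of $V$ on that ball.
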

This theorem does not provide us with a criterion for the analysis of fixed points that are stable but not asymptotically stable, which must belong to the class of non-hyperbolic fixed points. Nevertheless, it gives a sufficient condition for detecting unstable fixed points. Still, the case of $\rho(\bD\bg(\by^*))=1$ is not yet covered by Theorem \ref{Thm:_Asym_und_Instabil}, and thus, we use the following theorem from \cite{IKMSys}. For a better reading flow we first introduce some notation.

Since we have assumed that the system $\by'=\bA\by$ has exactly $k\geq 1$ linear invariants, there exist $\bn_1,\dotsc,\bn_k$ forming a basis of $\ker(\bA^T)$. Similar to \cite{IKMSys}, we introduce
\begin{equation*}\label{eq:N}
	\bN=\begin{pmatrix}
		\bn_1^T\\
		\vdots\\
		\bn_k^T
	\end{pmatrix}\in \R^{k\times N} 
\end{equation*}
as well as the set
\begin{equation}
	H=\{\by\in \R^N\mid \bN\by=\bN\by^*\}\label{eq:H},
\end{equation}
and repeat the following theorem on the stability of an iteration in its explicit form, i.\,e.\ $\by^{n+1}=\bg(\by^n)$.
\begin{thm}[\cite{IKMSys}]\label{th_stable}
	Let $\bA\in \R^{N\times N}$ such that $\ker(\bA)=\Span(\bv_1,\dotsc,\bv_k)$ represents a $k$-dimensional subspace of $\R^N$ with $k\geq 1$. Also, let $\by^*\in \ker(\bA)$ be a fixed point of a map $\bg\from D\to D$ where $D\tm \R^N$ contains a neighborhood $\mathcal D$ of $\by^*$. Moreover, let any element of $C=\ker(\bA)\cap \mathcal D$ be a fixed point of $\bg$ and suppose that $\bg\big\vert_{\mathcal{D}}\in \mathcal C^1$ as well as that the first derivatives of $\bg$ are Lipschitz continuous on $\mathcal{D}$. Then $\bD\bg(\by^*)\bv_i=\bv_i$ for $i=1,\dotsc, k$ and the following statements hold.
	\begin{enumerate}
		\item\label{it:Thma} If the remaining $N-k$ eigenvalues of $\bD\bg(\by^*)$ have absolute values smaller than $1$, then $\by^*$ is stable.\label{It:Thm_Stab_a}
		\item\label{it:Thmb} Let $H$ be defined by \eqref{eq:H} and $\bg$ conserve all linear invariants, which means that $\bg(\by)\in H\cap D$ for all $\by\in H\cap D$. If additionally the assumption of \ref{It:Thm_Stab_a} is satisfied, then there exists a $\delta>0$ such that $\by^0\in H\cap D$ and $\norm{\by^0-\by^*}<\delta$ imply $\by^n\to \by^*$ as $n\to \infty$.
	\end{enumerate}
\end{thm}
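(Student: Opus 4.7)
The plan is to exploit the fact that a whole $k$-dimensional manifold $C$ of fixed points of $\bg$ passes through $\by^*$: this forces $\ker(\bA)$ to be the center subspace of $\bD\bg(\by^*)$, while the hypothesis pushes the remaining $N-k$ eigenvalues strictly inside the unit disk. First, for each $i$ and sufficiently small $t$, the point $\by^*+t\bv_i$ lies in $C$ and is hence fixed by $\bg$. Differentiating $\bg(\by^*+t\bv_i)=\by^*+t\bv_i$ in $t$ at $t=0$ yields $\bD\bg(\by^*)\bv_i=\bv_i$, so $1$ is an eigenvalue of $\bD\bg(\by^*)$ of geometric multiplicity at least $k$, spanned by $\bv_1,\dotsc,\bv_k$.

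For part \ref{it:Thma}, I would pick a $\bD\bg(\by^*)$-invariant complement $W\tm\R^N$ of $\ker(\bA)$ corresponding to the $N-k$ contracting eigenvalues; such a $W$ exists by spectral separation from $1$. In the splitting $\R^N=\ker(\bA)\oplus W$, the Jacobian takes the block form $\diag(\bI_k,\bR)$ with $\rho(\bR)<1$, so $W$ can be equipped with an operator norm satisfying $\|\bR\|\leq\rho<1$. Writing $\by=\by^*+\bv+\bw$ with $\bv\in\ker(\bA)$, $\bw\in W$, the identity $\bg(\by^*+\bv)=\by^*+\bv$ on $C$ combined with the fundamental theorem of calculus gives
\begin{equation*}
\bg(\by^*+\bv+\bw)-\by^*-\bv-\bR\bw=\int_0^1\bigl(\bD\bg(\by^*+\bv+s\bw)-\bD\bg(\by^*)\bigr)\bw\,ds,
\end{equation*}
which, together with the Lipschitz continuity of $\bD\bg$, leads to the remainder estimate
\begin{equation*}
\|\bg(\by^*+\bv+\bw)-\by^*-\bv-\bR\bw\|\leq L\|\bw\|\bigl(\|\bv\|+\|\bw\|\bigr)
\end{equation*}
on a sufficiently small neighborhood. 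A short induction on a small ball then shows that $\|\bw^n\|$ contracts at rate essentially $\rho$ while $\|\bv^n-\bv^0\|$ remains bounded by a constant multiple of $\|\bw^0\|$, giving Lyapunov stability.

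For part \ref{it:Thmb}, I would pin down the correct complement explicitly. Since the rows of $\bN$ are a basis of $\ker(\bA^T)=\ran(\bA)^\perp$, we have $\ker(\bN)=\ran(\bA)$, and the hypothesis $\mu_\bA(0)=\gamma_\bA(0)=k$ yields the splitting $\R^N=\ker(\bA)\oplus\ran(\bA)$. In particular $\bN\vert_{\ker(\bA)}$ is a linear bijection onto $\R^k$, so $H\cap\ker(\bA)=\{\by^*\}$, and the invariance of $H$ under $\bg$ combined with differentiation at $\by^*$ gives $\bD\bg(\by^*)\ker(\bN)\tm\ker(\bN)$. Thus $W:=\ker(\bN)=\ran(\bA)$ is a valid choice in part \ref{it:Thma}. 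For $\by^0\in H\cap D$ close to $\by^*$, the $\ker(\bA)$-component $\bv^n$ is identically zero, so the iteration reduces to $\bw^{n+1}=\bR\bw^n+\bm\varphi(\bzero,\bw^n)$ with $\|\bm\varphi(\bzero,\bw)\|\leq L\|\bw\|^2$; this contracts to $\bzero$ on a sufficiently small ball, which is exactly $\by^n\to\by^*$.

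The delicate step is part \ref{it:Thma}: because $1\in\sigma(\bD\bg(\by^*))$, Theorem~\ref{Thm:_Asym_und_Instabil} is silent and the usual linearization argument breaks down. The trick is that the explicit manifold $C$ of fixed points plays the role of a center manifold: because $C$ is tangent to $\ker(\bA)$ and consists entirely of fixed points, the nonlinear drift along $\ker(\bA)$ is suppressed to second order by the Lipschitz bound on $\bD\bg$, and this is precisely what allows the contraction on $W$ to dominate the degenerate direction in the induction.
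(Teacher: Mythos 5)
This theorem is imported verbatim from \cite{IKMSys}; the paper you were given contains no proof of it, so there is nothing in-document to compare against line by line. Measured against the argument in the cited reference, which establishes stability by invoking the center manifold theorem for maps and the associated reduction principle (the set $C$ of fixed points is shown to be a local center manifold on which the dynamics are trivial), your proof takes a genuinely more elementary route: because the center manifold is here \emph{explicitly} the affine piece $C=\ker(\bA)\cap\mathcal D$ of fixed points, you bypass the existence theory entirely and run a direct contraction estimate. Your steps are sound: $\bD\bg(\by^*)\bv_i=\bv_i$ by differentiating along $C$; the hypothesis that the remaining $N-k$ eigenvalues lie strictly inside the unit disk forces the eigenvalue $1$ to be semisimple of multiplicity exactly $k$, so the spectral complement $W$ exists and carries an adapted norm with $\norm{\bR}\le\rho<1$; the Taylor identity with integral remainder is correct since $\bg(\by^*+\bv)=\by^*+\bv$, and the Lipschitz bound on $\bD\bg$ gives the quadratic-type remainder $L\norm{\bw}(\norm{\bv}+\norm{\bw})$ (up to the norms of the projections onto $\ker(\bA)$ and $W$, which you should mention but which only affect constants); the induction then yields geometric decay of $\bw^n$ and summable drift of $\bv^n$, hence Lyapunov stability. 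For part \ref{it:Thmb}, your identification $\ker(\bN)=\ran(\bA)$ and the invariance $\bD\bg(\by^*)\ker(\bN)\tm\ker(\bN)$ obtained by differentiating the $H$-invariance are both correct, and once $\ker(\bN)$ is an invariant complement of $\ker(\bA)$ it is automatically the contracting spectral subspace, so $\bv^n\equiv\bzero$ and the iteration contracts to $\by^*$.

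One point deserves explicit acknowledgement rather than a passing mention: the splitting $\R^N=\ker(\bA)\oplus\ran(\bA)$ used in part \ref{it:Thmb} requires $\mu_\bA(0)=\gamma_\bA(0)=k$, which is \emph{not} among the hypotheses written in the theorem statement (only $\dim\ker(\bA)=k$ is). It is, however, part of the paper's standing assumptions \eqref{eq:PDS_Sys}, and it is genuinely indispensable: if $\ker(\bA)\cap\ker(\bN)\neq\{\bzero\}$, then $H$ contains fixed points of $C$ arbitrarily close to $\by^*$ and the convergence claim of part \ref{it:Thmb} is simply false. So you should state that you are reading this semisimplicity condition as an implicit hypothesis inherited from the setting in which the theorem is applied; with that caveat recorded, your proof is complete and correct.
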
 We want to note that according to \cite[Remark 2.4]{huang2022stability}, $\bg\in \mathcal C^2(D)$ is enough to conclude that $\bg\big\vert_\mathcal{D}\in \mathcal C^1$ has Lipschitz continuous derivatives for an appropriate neighborhood $\mathcal D$.

\section{Stability investigation of modified Patankar schemes}\label{se_MPRK}
In \cite{IKM2D, IKMSys},  a novel stability investigation has been developed for conservative and positive time integration schemes. To that end, Theorem \ref{th_stable} was applied to a family of second order MPRK methods and it was shown that all fixed points are unconditional stable, i.\,e.\ stable for all $\dt>0$. Recently, also families of SSPMPRK schemes have been investigated inside this framework \cite{huang2022stability}.  In this section, we  extend those investigations and consider modified Patankar schemes based on several Runge--Kutta methods. Thereby, the schemes are linearly implicit, so that the function $\bg$ satisfying $\by^{n+1}=\bg(\by^n)$ is in all these cases implicitly given by solving linear systems. By the same reasoning as in \cite{huang2022stability}, one can verify that $\bg\in \mathcal C^2$ for all schemes under consideration. The main idea for proving this is to show that the functions $\bm \Phi_i$ defined by the numerical scheme are in $\mathcal{C}^2$ and that $\bg$ is obtained by solving only linear systems with a unique solution. 

Similarly, all schemes preserve positive steady states with the same arguments as in \cite{huang2022stability} or \cite[Prop. 2.3]{torlo2021issues}. Thus, in order to apply Theorem \ref{Thm:_Asym_und_Instabil} and part \ref{it:Thma} of Theorem \ref{th_stable}, we need to compute the eigenvalues of the Jacobian of $\bg$ evaluated at some steady state $\by^*\in \ker(\bA)\cap D^\circ$. However, to use also part \ref{it:Thmb} of Theorem \ref{th_stable}, we need to prove that $\bg$ additionally conserves all linear invariants.

In order to compute $\bD\bg(\by^*)$ we follow the approach as in \cite{IKMSys, huang2022stability} introducing functions $\bm \Phi_i$ and several Jacobians.
The functions $\bm \Phi_i$ arise from rearranging the equations for the $s$ stages of the numerical method leading to
\begin{equation}\label{eq:Schemes:Phi_k}
	\begin{aligned}
		\bzero&=\bm \Phi_j(\by^n,\by^{(1)}(\by^n),\dotsc, \by^{(j)}(\by^n)),\quad j=1,\dotsc,s\\
		\bzero&=\bm \Phi_{n+1}(\by^n,\by^{(1)}(\by^n),\dotsc, \by^{(s)}(\by^n),\bg(\by^n)).
	\end{aligned}
\end{equation}
Note that $\bm \Phi_j(\bx_0,\dotsc,\bx_j)$ is a function of $j+1$ vector-valued variables while $\bm \Phi_{n+1}(\bx_0,\dotsc,\bx_s,\by)$ depends on $s+2$ variables. As mentioned above, in all cases we will find $\bm \Phi_k,\bm\Phi_{n+1}$ are in $\mathcal C^2$, so that we may define 
\begin{equation}\label{eq:jacobians1}
	\begin{aligned}
		\bD_{n}\bm\Phi_j&=\frac{\partial}{\partial \bx_0}\bm\Phi_j, &	\bD_{l}\bm\Phi_j&=\frac{\partial}{\partial \bx_l}\bm\Phi_j,
	\end{aligned}
\end{equation}
for $j,l=1,\dotsc,s$ with $l\leq j$, and
\begin{equation}\label{eq:jacobians2}
	\begin{aligned}
		\bD_{n}\bm\Phi_{n+1}&=\frac{\partial}{\partial \bx_0}\bm\Phi_{n+1}, &\bD_{l}\bm\Phi_{n+1}&=\frac{\partial}{\partial \bx_l}\bm\Phi_{n+1},& &\bD_{n+1}\bm\Phi_{n+1}=\frac{\partial}{\partial \by}\bm\Phi_{n+1}.
	\end{aligned}
\end{equation}
for $l=1,\dotsc, s$. Moreover, the operators $\bD_i^*$ indicate the evaluation of the corresponding Jacobian at $\by^*,\by^{(1)}(\by^*)$ et cetera, e.\,g.\ $\bD^*_n\bm \Phi_2=\bD_n\bm \Phi_2(\by^*,\by^{(1)}(\by^*),\by^{(2)}(\by^*))$. As we interpret $\by^{(k)}=\by^{(k)}(\by^n)$ we also introduce the Jacobian 
\begin{equation*}
	\bD^*\by^{(k)}=\bD\by^{(k)}(\by^*).
\end{equation*}
With that we can derive a formula for computing $\bD\bg(\by^*)$, where $\by^{n+1}=\bg(\by^n)$ is the unique solution to \eqref{eq:Schemes:Phi_k}. 
The chain rule yields
\begin{equation*}
	\begin{aligned}
		\bzero&=\bD^*_{n}\bm\Phi_j+\sum_{l=1}^{j}\bD_{l}^*\bm \Phi_j \bD^*\by^{(l)},\quad j=1,\dotsc, s,\\
		\bzero&=\bD^*_{n}\bm\Phi_{n+1}+\sum_{l=1}^{s}\bD_{l}^*\bm \Phi_{s+1} \bD^*\by^{(l)}+ \bD_{n+1}^*\bm\Phi_{n+1}\bD\bg(\by^*),
	\end{aligned}
\end{equation*}
which can be rewritten to
\begin{equation}\label{eq:FormularJacobian}
	\begin{aligned}
		\bD^*\by^{(j)}&=-\left(\bD^*_{j}\bm\Phi_j\right)^{-1}\left(\bD^*_{n}\bm\Phi_j+\sum_{l=1}^{j-1}\bD_{l}^*\bm \Phi_j \bD^*\by^{(l)}\right),\quad j=1,\dotsc, s,\\
		\bD\bg(\by^*)&=-\left(\bD^*_{n+1}\bm\Phi_{n+1}\right)^{-1}\left(\bD^*_{n}\bm\Phi_{n+1}+\sum_{l=1}^{s}\bD_{l}^*\bm \Phi_{n+1} \bD^*\by^{(l)}\right),
	\end{aligned}
\end{equation}
if all occurring inverses exist. Also, in order to avoid long formulas in the following, we may omit to write the functions $\bm\Phi_i$ together with all their arguments.
\subsection*{Stability of MPRK(3,2)}

In \cite{torlo2021issues} a novel three stage, second order MPRK method has been developed and investigated numerically. The scheme has shown favorable properties and is given by 
\begin{equation}
	\label{eq:MPRK32}
	\begin{aligned}
		y^{(1)}_i &= y^{n}_i
		+ \dt \sum_j \left(
		\prod_{ij}\bigl( \by^n \bigr) \frac{y^{(1)}_j}{y^n_j}
		- \dest_{ij}\bigl( \by^n \bigr) \frac{y^{(1)}_i}{y^n_i}
		\right),
		\\
		y^{(2)}_i &= y^{n}_i
		+ \dt \sum_j \left(
		\frac{\prod_{ij}\bigl( \by^n \bigr) + \prod_{ij}\bigl( \by^{(1)} \bigr)}{4} \frac{y^{(2)}_j}{y^{(1)}_j}
		- \frac{\dest_{ij}\bigl( \by^n \bigr) + \dest_{ij}\bigl( \by^{(1)} \bigr)}{4} \frac{y^{(2)}_i}{y^{(1)}_i}
		\right),
		\\
		y^{n+1} &= y^{n}_i
		+ \dt \sum_j \Biggl(
		\frac{\prod_{ij}\bigl( \by^n \bigr) + \prod_{ij}\bigl(\by^{(1)} \bigr) + 4 \prod_{ij}\bigl( \by^{(2)} \bigr)}{6} \frac{y^{n+1}_j}{y^{(1)}_j}- \frac{\dest_{ij}\bigl( \by^n \bigr) + \dest_{ij}\bigl( \by^{(1)} \bigr) + 4 \dest_{ij}\bigl(\by^{(2)} \bigr)}{6} \frac{y^{n+1}_i}{y^{(1)}_i}
		\Biggr).
	\end{aligned}
\end{equation}
Note that this scheme is steady state preserving as the stages are uniquely determined and $\by^{n+1}=\by^{(2)}=\by^{(1)}=\by^*$ is a valid solution. Hence, writing $\by^{n+1}=\bg(\by^*)$ we obtain $\bg(\by^*)=\by^{(2)}(\by^*)=\by^{(1)}(\by^*)=\by^*$.

Next, we apply MPRK(3,2) to \eqref{eq:PDS_Sys}
which, due to \eqref{eq:-sum(dij)}, yields in matrix vector notation the following identity
\[\bzero=(\bI-\dt\bA)\by^{(1)}-\by^n=\bm \Phi_1\]
and we obtain
\begin{equation}
	\bD_n^*\bm \Phi_1=-\bI\quad \text{ and } \quad  \bD_{1}^*\bm \Phi_1=\bI-\dt\bA.
\end{equation}
The second stage can be written as
the linear system 
\begin{equation}
	\bzero=\by^n+\frac{\dt}{4}\bA\diag(\by^{(2)})(\diag(\by^{(1)}))^{-1}(\by^n+\by^{(1)})-\by^{(2)}=\bm \Phi_2.\label{eq:Phi3yn}
\end{equation}
This equation can be transformed using the fact that diagonal matrices commute and $\diag(\bv)\bw=\diag(\bw)\bv$ for all $\bv,\bw\in \R^N$, yielding
\begin{align}
	\bm \Phi_2&=\by^n+\frac{\dt}{4}\bA\diag(\by^n+\by^{(1)})(\diag(\by^{(1)}))^{-1}\by^{(2)}-\by^{(2)}\label{eq:Phi3y3}\\
	&=\by^n+\frac{\dt}{4}\bA\diag(\by^{(2)})\left(\diag(\by^{n})(\by^{(1)})^{-1}+\bm 1\right)-\by^{(2)},\label{eq:Phi3y2}
\end{align}
where $\bm 1=(1,\dotsc,1)^T\in \R^N$ as well as $(\by^{-1})_i=y_i^{-1}$ for $\by\in \R^N_{>0}$ and $i=1,\dotsc,N$. Hence, we find
\begin{equation}
	\begin{aligned}
		\bD_n^*\bm \Phi_2&\overset{\eqref{eq:Phi3yn}}{=}\bI+\frac{\dt}{4}\bA, \qquad
		\bD_{1}^*\bm \Phi_2&\overset{\eqref{eq:Phi3y2}}{=}-\frac{\dt}{4}\bA,\qquad
		\bD_{2}^*\bm \Phi_2&\overset{\eqref{eq:Phi3y3}}{=}\frac{\dt}{2}\bA-\bI.
	\end{aligned}
\end{equation}
With analogous steps as before, the final step reads
\begin{equation}
	\bzero=\by^n+\frac{\dt}{6}\bA\diag(\by^{n+1})(\diag(\by^{(1)})^{-1}(\by^n+\by^{(1)}+4\by^{(2)})-\by^{n+1}=\bm \Phi_{n+1}. \label{eq:Phin+1yn}
\end{equation}
Using again the above mentioned calculation rules, we get
\begin{align}
	\bm \Phi_{n+1}&=\by^n+\frac{\dt}{6}\bA\diag(\by^n+\by^{(1)}+4\by^{(2)})(\diag(\by^{(1)}))^{-1}\by^{n+1}-\by^{n+1}\label{eq:Phi4yn+1}\\
	&=\by^n+\frac{\dt}{6}\bA\diag(\by^{n+1})\left(\diag(\by^{n}+4\by^{(2)})(\by^{(1)})^{-1}+\bm 1\right)-\by^{n+1}.\label{eq:Phi4y3}
\end{align}
Therefore,
\begin{equation}
	\begin{aligned}
		\bD_n^*\bm \Phi_{n+1}&\overset{\eqref{eq:Phin+1yn}}{=}\bI+\frac{\dt}{6}\bA, &&\quad &
		\bD_{1}^*\bm \Phi_{n+1}&\overset{\eqref{eq:Phi4y3}}{=}-\frac{5}{6}\dt\bA,\\
		\bD_{2}^*\bm \Phi_{n+1}&\overset{\eqref{eq:Phin+1yn}}{=}\frac23\dt\bA, &&\quad &
		\bD_{n+1}^*\bm \Phi_{n+1}&\overset{\eqref{eq:Phi4yn+1}}{=}\dt\bA-\bI.
	\end{aligned}
\end{equation}
Since $\sigma(\bA)\tm \C^-$, we conclude that the inverses of the following matrices exist. We obtain from \eqref{eq:FormularJacobian}:
\begin{equation*}
	\begin{aligned}
		\bD^*\by^{(1)}&=-\left(\bD^*_{1}\bm\Phi_1\right)^{-1}\bD^*_{n}\bm\Phi_1=(\bI-\dt\bA)^{-1},\\
		\bD^*\by^{(2)}&=-\left(\bD^*_{2}\bm\Phi_2\right)^{-1}\left(\bD^*_{n}\bm\Phi_2+\bD_{1}^*\bm \Phi_2 \bD^*\by^{(1)}\right)=-\left(\frac{\dt}{2}\bA-\bI\right)^{-1}\left(\bI+\frac{\dt}{4}\bA-\frac{\dt}{4}\bA(\bI-\dt\bA)^{-1}\right),\\
		\bD\bg(\by^*)
		&=-(\bD_{n+1}^*\bm \Phi_{n+1})^{-1}(\bD_{n}^*\bm \Phi_{n+1}+\bD_{1}^*\bm \Phi_{n+1}\bD^*\by^{(1)}+\bD_{2}^*\bm \Phi_{n+1}\bD^*\by^{(2)})\\
		&=
		(\bI-\dt\bA)^{-1}\Biggl(\bI+\frac{\dt}{6}\bA-\frac{5}{6}\dt\bA(\bI-\dt\bA)^{-1}\\&\hspace{2cm} -\frac23 \dt\bA\left(\frac{\dt}{2}\bA-\bI\right)^{-1}
		\left(\bI+\frac{\dt}{4}\bA-\frac{\dt}{4}\bA(\bI-\dt\bA)^{-1}\right)\Biggr).
	\end{aligned}
\end{equation*}
Hence, eigenvectors of $\bA$ are eigenvectors of the Jacobian $\bD\bg(\by^*)$.  By substituting $z= \lambda \dt$, we obtain the stability function
$R(z)$ for MPRK(3,2) with
\begin{equation}
	\begin{aligned}
		R(z)&=\frac{1}{1-z}\left(1+\frac{z}{6}-\frac{5z}{6(1-z)}-\frac{4z}{3 (z-2)}\left(1+\frac{z}{4}-\frac{z}{4(1-z)}\right)\right)
		=\frac{z^3+18z-12}{6(1-z)^2(z-2)},
	\end{aligned}
\end{equation}
so that \[\sigma(\bD\bg(\by^*))=\{R(\lambda\dt)\mid \lambda\in \sigma(\bA)\}. \]
Before we analyze this stability function, we want to point out that, due to \eqref{eq:Phi4yn+1}, \[\bn^T\bg(\by^n)=\bn^T\by^{n+1}=\bn^T\by^n\] holds true for all $\bn\in \ker(\bA)$ as $\bn^T\bA=\bzero$. Hence, the map $\bg$ generating the iterates of the MPRK(3,2) scheme conserves all linear invariants of the linear test problem. 
As a consequence, all conditions for applying Theorem \ref{Thm:_Asym_und_Instabil} and Theorem \ref{th_stable} are met. It remains to prove that $\abs{R(z)}<1$ holds in order to conclude the stability of the fixed points as well as the local convergence towards the corresponding steady state solution $\by^*$ satisfying $\bn^T\by^0=\bn^T\by^*$ for all $\bn\in \ker(\bA^T)$.
\begin{prop}\label{prop:Stab_MPRK32}
	The stability function $R(z)=\frac{z^3+18z-12}{6(1-z)^2(z-2)}$ satisfies $R(0)=1$ and $\abs{R(z)}< 1$ for $z\in \C^-\setminus \{0\}$.  
\end{prop}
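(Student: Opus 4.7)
The value $R(0)=\frac{-12}{6\cdot 1\cdot(-2)}=1$ is immediate by direct substitution. For the inequality $\lvert R(z)\rvert<1$ on $\mathbb{C}^{-}\setminus\{0\}$, the plan is to argue by the maximum modulus principle after observing that $R$ is a rational function whose poles lie only at $z=1$ (double) and $z=2$, both of which are located in the open right half-plane. Consequently, $R$ is holomorphic on the closed left half-plane, and since $\deg(\text{num})=\deg(\text{denom})=3$ it extends continuously to the point at infinity with value $R(\infty)=\tfrac{1}{6}$. Therefore $R$ defines a continuous function on the compactified closed left half-plane, and its modulus attains its maximum on the boundary, which consists of the imaginary axis together with $\infty$.

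The main computational step is to verify $\lvert R(iy)\rvert\leq 1$ for all $y\in\mathbb{R}$, with equality only at $y=0$. I would substitute $z=iy$ and compute
\begin{equation*}
  \lvert(iy)^3+18(iy)-12\rvert^{2}=144+(18y-y^{3})^{2},\qquad \lvert 1-iy\rvert^{4}\lvert iy-2\rvert^{2}=(1+y^{2})^{2}(y^{2}+4),
\end{equation*}
and then reduce the desired inequality $\lvert R(iy)\rvert^{2}\leq 1$ to the polynomial statement
\begin{equation*}
  36(1+y^{2})^{2}(y^{2}+4)-144-(18y-y^{3})^{2}\geq 0.
\end{equation*}
Expanding both sides yields the difference $252y^{4}+35y^{6}=7y^{4}(36+5y^{2})$, which is nonnegative and vanishes if and only if $y=0$. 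Together with $R(\infty)=\tfrac{1}{6}$, this shows that $\lvert R\rvert\leq 1$ on the entire boundary of the closed left half-plane, with equality attained only at $z=0$.

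Finally, I invoke the maximum modulus principle: since $R$ is holomorphic and nonconstant on the open left half-plane, $\lvert R\rvert$ cannot attain its maximum in the interior; hence $\lvert R(z)\rvert<1$ for every $z$ in the open left half-plane. Combined with the strict inequality $\lvert R(iy)\rvert<1$ for $y\neq 0$ established above, this gives $\lvert R(z)\rvert<1$ for all $z\in\mathbb{C}^{-}\setminus\{0\}$, as claimed. The only step requiring genuine care is the polynomial inequality on the imaginary axis; the rest is structural, relying on the location of the poles and a standard maximum principle argument.
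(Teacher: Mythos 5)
Your proof is correct, and the computational core is identical to the paper's: both reduce $\lvert R(\ii y)\rvert\leq 1$ to a polynomial inequality in $y$ with only even powers (your difference $7y^4(36+5y^2)$ is exactly the paper's $(144+324y^2+216y^4+36y^6)-(144+324y^2-36y^4+y^6)$). The one place you diverge is in how the boundary estimate is propagated into the open half-plane: the paper invokes the Phragm\'en--Lindel\"of principle on the interior of $\C^-$ (justified by the sub-exponential growth of a rational function), whereas you compactify the closed left half-plane by adjoining the point at infinity, use $\deg(\text{num})=\deg(\text{denom})$ to get $R(\infty)=\tfrac16$, and then apply the ordinary maximum modulus principle on a compact domain. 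Your route is slightly more elementary and self-contained for this specific $R$, since it avoids quoting a growth-condition theorem and makes the behavior at infinity explicit; the paper's Phragm\'en--Lindel\"of formulation has the advantage of applying verbatim to any stability function $R=N/D$ with $\deg N\leq\deg D$ and $D\neq 0$ on $\C^-$, which the authors reuse as a template in their Remark 3.4 for the later schemes. Both arguments conclude with the same final step: strictness in the interior because a nonconstant holomorphic function cannot attain its maximum modulus there.
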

\begin{proof}
	Obviously, it is $R(0)=1$. Then, we investigate $\abs{R(z)}^2$ for $z=\ii y$ and $y\in \R$. Elementary calculations yield for the denominator of $\abs{R(z)}^2$:  
	\begin{equation*}
		\abs{6(1-z)^2(z-2)}^2= 144+324y^2+216y^4+36y^6,
	\end{equation*}
	where the numerator gives 
	\begin{equation*}
		\abs{z^3+18z-12}^2= 144+324y^2-36y^4+y^6.
	\end{equation*}
	Due to the even powers of $y$, it is obviously that for $y\neq 0$, we have $\abs{R(\ii y)}<1$. Since $R(z)$ is a rational function with poles\footnote{Note that the stability function is always well defined on $\C^-$ as otherwise a Jacobian of some $\bm \Phi_i$ must have been singular.} at $z=1,2$, we see that $R(z)$ is holomorphic for all $z \in \C^-$. We can apply the Phgramén-Lindelöf principle\footnote{A special case of the principle is concerned with a holomorphic function $f$ on the interior of $\C^-$, which additionally is continuous on the imaginary axis and growths slower than the function $\exp(C\lvert z\rvert ^\rho)$ for some $C>0$ and $\rho\in[0,1)$. For such a function it is possible to conclude $\lvert f(z)\rvert\leq 1$ for all $z\in\C^-$ if this inequality holds true on the imaginary axis \cite{SS03,T39}.} on the union of the origin and the interior of $\C^-$ and deduce that $\abs{R(z)}\leq 1$ for all $z\in \C^-$.
	With the maximum principle, we know that $\abs{R(z)}=1$ is only possible on the imaginary axis since $R$ is not constant. Therefore, it follows $\abs{R(z)}<1$
	for all $z$ with $\Re(z)<0$.
\end{proof}

From this result, we can follow as a direct consequence of Theorem \ref{th_stable}:
\begin{cor}
	\begin{enumerate}
		\item Let $\by^*$ be a positive steady state of the differential equation \eqref{eq_basic}. Then,  $\by^*$ is a stable fixed point of the MPRK(3,2) for all $\dt>0$.
		\item Let $\by^*$ be the unique steady state of the initial value problem \eqref{eq_basic}, \eqref{eq_inital}. Then,  the scheme MPRK(3,2) converges towards $\by^*$ for all $\dt>0$, if $\by^0$ is close enough to $\by^*$.
	\end{enumerate}
\end{cor}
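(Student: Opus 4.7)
The plan is to deduce both parts of the corollary as direct applications of Theorem \ref{th_stable}, using Proposition \ref{prop:Stab_MPRK32} to bound the nontrivial eigenvalues and invoking the preparatory work already carried out for MPRK(3,2). First, I would verify the hypotheses of Theorem \ref{th_stable}: the map $\bg$ generated by \eqref{eq:MPRK32} is $\mathcal C^2$ on a neighborhood $\mathcal D$ of $\by^*$ by the general linear-system argument recalled at the beginning of Section \ref{se_MPRK}; every element of $\ker(\bA)\cap\mathcal D$ is a fixed point of $\bg$ by the steady-state preservation noted directly below \eqref{eq:MPRK32}; and $\bg$ conserves all linear invariants, since multiplying the relation $\bm\Phi_{n+1}=\bzero$ from \eqref{eq:Phi4yn+1} by $\bn^T$ with $\bn\in\ker(\bA^T)$ yields $\bn^T\bg(\by^n)=\bn^T\by^n$.

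Next, I would analyze the spectrum of $\bD\bg(\by^*)$ using the identity $\sigma(\bD\bg(\by^*))=\{R(\lambda\dt)\mid \lambda\in\sigma(\bA)\}$ established above. By the hypothesis \eqref{eq:PDS_Sys}, the eigenvalue $0$ of $\bA$ has algebraic and geometric multiplicity $k$, so a basis $\bv_1,\dotsc,\bv_k$ of $\ker(\bA)$ consists of eigenvectors of $\bD\bg(\by^*)$ corresponding to $R(0)=1$. Because $R$ is a rational function of $\dt\bA$, the remaining $N-k$ eigenvalues of $\bD\bg(\by^*)$ (counted with algebraic multiplicity) are precisely the values $R(\lambda\dt)$ for $\lambda\in\sigma(\bA)\setminus\{0\}$; for these eigenvalues we have $\Re(\lambda)<0$, and Proposition \ref{prop:Stab_MPRK32} supplies $\lvert R(\lambda\dt)\rvert<1$ for every $\dt>0$.

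With this eigenvalue picture in hand, part 1 follows immediately from part \ref{it:Thma} of Theorem \ref{th_stable}, and part 2 follows from part \ref{it:Thmb}: the conservation of linear invariants together with uniqueness of the steady state ensures that $\by^0$ satisfies $\bN\by^0=\bN\by^*$, so $\by^0\in H$, and the convergence conclusion applies once $\by^0$ is sufficiently close to $\by^*$.

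The only subtle point I anticipate is confirming that the multiplicity bookkeeping really yields exactly $k$ eigenvalues of $\bD\bg(\by^*)$ equal to $1$ and $N-k$ eigenvalues of modulus strictly less than $1$, as required by part \ref{it:Thma} of Theorem \ref{th_stable}; this is guaranteed by the assumption $\mu_\bA(0)=\gamma_\bA(0)=k$ in \eqref{eq:PDS_Sys} combined with the rational-function form of the stability function $R$. Every other step is a direct invocation of results already proved in this section.
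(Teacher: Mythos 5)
Your proposal is correct and follows essentially the same route as the paper: the text preceding the corollary already verifies the hypotheses of Theorem \ref{th_stable} (smoothness, steady-state preservation, conservation of linear invariants via \eqref{eq:Phi4yn+1}, and the spectral identity $\sigma(\bD\bg(\by^*))=\{R(\lambda\dt)\mid\lambda\in\sigma(\bA)\}$), and the corollary is then stated as a direct consequence of Proposition \ref{prop:Stab_MPRK32} combined with Theorem \ref{th_stable}. Your extra remark on the multiplicity bookkeeping for the eigenvalue $1$ is a sound elaboration of what the paper leaves implicit.
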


\begin{rem} \label{rem:Phragmen}
	We want to mention that as long as the stability function $R(z)=\frac{N(z)}{D(z)}$ with polynomials $N,D$ satisfying $\deg(N)\leq\deg(D)$ and $D(z)\neq 0$ for all $z\in \C^-$, we can conclude $\lvert R(z)\rvert<1$ for $\Re(z)<0$ whenever $\lvert R(z)\rvert\leq1$ holds on the imaginary axis with the same reasoning as in the proof of Proposition \ref{prop:Stab_MPRK32}.
	
	Moreover, we point out that the Phragmén-Lindelöf principle can also be applied to different sectors of $\C^-$. 
	Indeed, according to \cite{StabMetz}, all eigenvalues of a proper Metzler matrix $\bA\in \R^{N\times N}$ with $0\in \sigma(\bA)\tm \C^-$ and $N\leq 4$ lie in the sector $S=\left\{z\in \C^-\mid \arg(z)\in(\tfrac34\pi,\tfrac54\pi)\right\}$. Hence, for investigating a scheme when applied to a system with at most four equations, it suffices to investigate the stability function on the boundary of $S$ rather than the imaginary axis. This approach might be useful to give first insights into the stability of a scheme whose analysis for the general case is more involved.
\end{rem}

\subsection*{Stability of MPRK43($\alpha, \beta$)}

In the following part, the third order accurate MPRK43-family from \cite{kopecz2018unconditionally, kopecz2019existence} will be analyzed, which can be written as
{\small{\begin{equation}
	\label{eq:MPRK43-family}
	\begin{aligned}
		y^{(1)}_i &= y^n_i
		+  a_{21} \dt \sum_{j=1}^N \left(
		\prod_{ij}\bigl( \by^n \bigr) \frac{y^{(1)}_j}{y^n_j}
		- \dest_{ij}\bigl( \by^n \bigr) \frac{y^{(1)}_i}{y^n_i}
		\right),
		\\
		y^{(2)}_i &= y^n_i
		+\dt \sum_{j=1}^N
		\Biggl(\left(a_{31} \prod_{ij}\bigl(\by^n\bigr)+ a_{32} \prod_{ij} \bigl(\by^{(1)}\bigr) \right)  \frac{  y_j^{(2)}
		}{\bigl(y_j^{(1)}\bigr)^{\frac1p } \bigl(y_j^n\bigr)^{1-\frac1p} }
		-\left(a_{31} \dest_{ij}\bigl(\by^n\bigr)+ a_{32} \dest_{ij} \bigl(\by^{(1)}\bigr) \right)  \frac{  y_i^{(2)}
		}{\bigl(y_i^{(1)}\bigr)^{\frac1p } \bigl(y_i^n\bigr)^{1-\frac1p} }
		\Biggr),
		\\
		y^{(3)}_i &= y_i^n + \dt \sum_{j=1}^N
		\Biggl(\left( \beta_1 \prod_{ij} \bigl( \by^n\bigr) +\beta_2 \prod_{ij} \bigl(\by^{(1)}\bigr)  \right) \frac{y^{(3)}_j}{\bigl(y_j^{(1)} \bigr)^{\frac1q}
			\bigl(y_j^n\bigr)^{1-\frac1q}}
		-
		\left( \beta_1 \dest_{ij} \bigl( \by^n\bigr) +\beta_2 \dest_{ij} \bigl(\by^{(1)}\bigr)  \right) \frac{y^{(3)}_i}{\bigl(y_i^{(1)} \bigr)^{\frac1q}
			\bigl(y_i^n\bigr)^{1-\frac1q}}\Biggr)
		\\
		y^{n+1}_i &= y^n_i
		+ \dt \sum_{j=1}^N \Biggl(
		\left( b_1 \prod_{ij}\bigl( \by^n \bigr) +b_2\prod_{ij}\bigl( \by^{(1)} \bigr)
		+ b_3 \prod_{ij}\bigl( \by^{(2)} \bigr)
		\right) \frac{y^{n+1}_j}{y^{(3)}_j}
				- \left( b_1 \dest_{ij}\bigl( \by^n \bigr) +b_2\dest_{ij}\bigl( \by^{(1)} \bigr)
		+ b_3 \dest_{ij}\bigl( \by^{(2)} \bigr)
		\right) \frac{y^{n+1}_i}{y^{(3)}_i}
		\Biggr),
	\end{aligned}
\end{equation}}}
where $p=3 a_{21}\left(a_{31}+a_{32} \right)b_3,\; q=a_{21},\;\beta_2=\frac{1}{2a_{21}}$ and $\beta_1= 1-\beta_2$. 

We want to note that in \cite{kopecz2018unconditionally} the stages are labeled $\by^{(1)},\by^{(2)},\by^{(3)}$ and $\bm \sigma$, however, the first stage reads $\by^{(1)}=\by^n$ and can directly be inserted into the further stages. As a result, the first non-trivial stage is the second one, which we denote by $\by^{(1)}$ here. As a result of this shift in superscript numbering, the stage $\bm\sigma$ corresponds to $\by^{(3)}$ in our framework. We performed these changes in order to be consistent with our notation and to reduce the number of functions $\bm \Phi_i$ by one. Also note that by the same reasoning as above, this scheme is also steady state preserving so that $\by^n=\by^*>\bzero$ yields $\by^{(i)}(\by^n)=\by^*$ for $i=1,2,3$ and $\bg(\by^n)=\by^{n+1}(\by^n)=\by^*$.

The Butcher tableau with respect to the two parameters is defined by
\begin{equation}
	\begin{aligned}
		\def\arraystretch{1.2}
		\begin{array}{c|ccc}
			0 &  & & \\
			\alpha & \alpha & & \\
			\beta & \frac{3\alpha\beta (1-\alpha)-\beta^2}{\alpha(2-3\alpha)}& \frac{\beta (\beta-\alpha)}{\alpha(2-3\alpha)}& \\
			\hline
			& 1+\frac{2-3(\alpha+\beta)}{6 \alpha \beta } &\frac{3 \beta-2}{6\alpha (\beta-\alpha)} & \frac{2-3\alpha}{6\beta(\beta-\alpha)}
		\end{array}
	\end{aligned}
\end{equation}
with
\begin{equation}\label{eq:cond:alpha,beta}
	\begin{rcases}
		2/3 \leq \beta \leq 3\alpha(1-\alpha)\\
		3\alpha(1-\alpha)\leq\beta \leq 2/3 \\
		\tfrac{3\alpha-2}{6\alpha-3}\leq \beta \leq 2/3
	\end{rcases}
	\text{ for }
	\begin{cases}
		1/3 \leq \alpha<\frac23,\\
		2/3 \leq \alpha<\alpha_0,\\
		\alpha>\alpha_0,
	\end{cases}
\end{equation}
and $\alpha_0\approx 0.89255.$ \\

We  follow the steps from MPRK32 and obtain 
\begin{equation*}
	\bzero=(\bI-a_{21}\dt\bA)\by^{(1)}-\by^n=\bm \Phi_1.
\end{equation*}
The Jacobians of $\bm\Phi_1$ with respect to $\by^n$ and $\by^{(1)}$ are
\begin{equation}
	\bD_n^*\bm \Phi_1=-\bI\quad \text{ and } \quad  \bD_{1}^*\bm \Phi_1=\bI-a_{21}\dt\bA.
\end{equation}
For the second step, we obtain the linear system
\begin{equation}\label{eq:Phi3yn_2}
	\begin{aligned}
		\bzero=&\dt \bA\diag(\by^{(2)})((\diag(\by^{(1)}))^{-1})^{\frac1p}
		((\diag(\by^{n}))^{-1})^{1-\frac1p}(a_{31}\by^n+a_{32}\by^{(1)})	+\by^n-\by^{(2)}=\bm \Phi_2.
	\end{aligned}
\end{equation}
In order to compute the Jacobians of $\bm \Phi_2$ with respect to $\by^n$ and $\by^{(1)}$, we introduce \[\bbf(\bx,\by(\bx))=\diag(\bx)^k(a_1\bx+a_2\by(\bx))\] for some $k,a_1,a_2\in \R$, where $\by(\bx^*)=\bx^*$. We see that 
\begin{equation}\label{eq:ProductRuleDiag}
	\begin{aligned}
		\left(\frac{\partial}{\partial \bx}\bbf(\bx^*,\by(\bx^*))\right)_{ij}&= \frac{\partial}{\partial x_j}\left((x_i)^k(a_1x_i+a_2y_i(\bx)))\right)\Big|_{\bx=\bx^*}\\&=\delta_{ij}\left(k(x_i^*)^{k-1}(a_1+a_2)x_i^*+(x_i^*)^ka_1\right)\\&=\left(\diag(\bx^*)^{k}\right)_{ij}(k(a_1+a_2)+a_1).
	\end{aligned}
\end{equation}
Hence, the derivatives of $\bm\Phi_2$ are given by the following terms, using $a_{31}+a_{32}=\beta$:
\begin{equation}
	\begin{aligned}
		\bD_n^*\bm \Phi_2&=\bI + \dt \bA \left( \left(\frac1p-1\right)\beta +  a_{31}\right), \\
		\bD_{1}^*\bm \Phi_2&= \dt \bA \left( -\frac{\beta}{p}  +  a_{32}\right), \\
		\bD_{2}^*\bm \Phi_2&=-\bI+\beta\dt\bA.
	\end{aligned}
\end{equation}
Next, $\bm \Phi_3$ satisfies 
\begin{equation}\label{eq:intermedia}
	\begin{aligned}
		\bzero=& \dt\bA\diag(\by^{(3)})((\diag(\by^{(1)}))^{-1})^{\frac1q}
		((\diag(\by^{n}))^{-1})^{1-\frac1q}(\beta_1\by^n+\beta_2\by^{(1)})+\by^n-\by^{(3)}=\bm \Phi_3.
	\end{aligned}
\end{equation}
The derivatives are given by the following terms, using $\beta_1+\beta_2=1$ and \eqref{eq:ProductRuleDiag}:
\begin{equation}
	\begin{aligned}
		\bD_n^*\bm \Phi_3&=\bI + \dt \bA  \left( \frac1q-1 +  \beta_1\right), \\
		\bD_{1}^*\bm \Phi_3&= \dt \bA  \left( -\frac1q +  \beta_2\right), \\
		\bD_{2}^*\bm \Phi_3&= \bzero, \\
		\bD_{3}^*\bm \Phi_3&=-\bI+\dt\bA.
	\end{aligned}
\end{equation}
Finally, the last step can be written as 

\begin{equation}\label{eq:Phi4yn}
	\begin{aligned}
		\bzero=\by^n+\dt\bA\diag(\by^{n+1})((\diag\by^{(3)}))^{-1})
		(b_1\by^n+b_2\by^{(1)}+b_3 \by^{(2)})-\by^{n+1}=\bm \Phi_{n+1}.
	\end{aligned}
\end{equation}
Note that from this equation we can conclude that the generating map $\bg$ conserves all linear invariants as $\bn^T\bA=\bzero$ implies $\bn^T\by^n=\bn^T\by^{n+1}.$
Furthermore, the derivatives are given by the following terms, using \mbox{$b_1+b_2+b_3=1$:}
\begin{equation}
	\begin{aligned}
		\bD_n^*\bm \Phi_{n+1}&=\bI + b_1\dt \bA,  \\
		\bD_{1}^*\bm \Phi_{n+1}&= b_2\dt \bA,  \\
		\bD_{2}^*\bm \Phi_{n+1}&= b_3\dt \bA,  \\
		\bD_{3}^*\bm \Phi_{n+1}&=-\dt\bA,\\
		\bD_{n+1}^*\bm \Phi_{n+1}&=\dt \bA -\bI.
	\end{aligned}
\end{equation}
With $\sigma(\bA)\tm \C^-$ the inverses needed for \eqref{eq:FormularJacobian} exist and we get 
\begin{equation}
	\begin{aligned}
		\bD^*\by^{(1)}&=-\left(\bD^*_{1}\bm\Phi_1\right)^{-1}\bD^*_{n}\bm\Phi_1=(\bI-\alpha\dt\bA)^{-1},\\
		\bD^*\by^{(2)}&=-\left(\bD^*_{2}\bm\Phi_2\right)^{-1}\left(\bD^*_{n}\bm\Phi_2+\bD_{1}^*\bm \Phi_2 \bD^*\by^{(1)}\right)\\
		&=\left(\bI-\beta\dt\bA\right)^{-1}\Biggl(\bI + \dt \bA \left( \left(\frac1p-1\right)\beta +  a_{31}\right)+\dt \bA \left( -\frac{\beta}{p}  +  a_{32}\right)(\bI-\alpha\dt\bA)^{-1}\Biggr),\\
		\bD^*\by^{(3)}&=-\left(\bD^*_{3}\bm\Phi_3\right)^{-1}\left(\bD^*_{n}\bm\Phi_3+\bD_{1}^*\bm \Phi_3 \bD^*\by^{(1)}+\bD_{2}^*\bm \Phi_3 \bD^*\by^{(2)}\right)\\
		&=\left(\bI-\dt\bA\right)^{-1}\Biggl(\bI + \dt \bA  \left(\frac1q-1 +  \beta_1\right)+\dt \bA  \left(-\frac1q +  \beta_2\right)(\bI-\alpha\dt\bA)^{-1}\Biggr),\\
		\bD\bg(\by^*)&=-\left(\bD^*_{n+1}\bm\Phi_{n+1}\right)^{-1}\left(\bD^*_{n}\bm\Phi_{n+1}+\sum_{l=1}^{3}\bD_{l}^*\bm \Phi_{n+1} \bD^*\by^{(l)}\right).
	\end{aligned}
\end{equation}
The stability function can be computed similar to the one of MPRK(3,2) and is given by
\begin{equation*}
	\begin{aligned}
		R(z)=&\frac{1+b_1 z+\frac{b_2 z}{1-\alpha z} +\frac{b_3 z\left(1+z\left( \left(\frac1p-1\right)\beta +  a_{31}\right)+\frac{z\left(-\frac{\beta}{q} +a_{32}\right)}{1-\alpha z}\right)}{1-\beta z}-\frac{z\left(1+z\left(\frac1q -1+\beta_1\right)+\frac{z(-\frac1q+\beta_2)}{1-\alpha z} \right)}{1-z}}{1-z}.
	\end{aligned}
\end{equation*} 
It can be reproduced in our repository \cite{IOEgit}, that this stability function can be rewritten in the form
\begin{equation}\label{eq:Stab_fun_MPRK43(alpha,beta)}
	R(z)=\frac{\sum_{j=0}^4n_jz^j}{\sum_{j=0}^4d_jz^j},
\end{equation}
where
\begin{equation*}
	\begin{aligned}
		n_0&=1, &&&d_0&=1,\\
		n_1&=-(1+\alpha+\beta),&&&d_1&=-(\alpha+\beta+2),
	\end{aligned}
\end{equation*}
\begin{equation*}
	\begin{aligned}
		n_2&=\frac{-6(1+\beta)\alpha^3+(6\beta^2+3)\alpha^2+(6\beta^2-3\beta+1)\alpha+2(\beta-1)}{6\alpha(\beta-\alpha)},\\
		n_3&=\frac{(6\beta-3)\alpha^3+(6\beta^2+1)\alpha^2+(3\beta^2+\beta-1)\alpha-2(\beta-1)}{6\alpha(\beta-\alpha)},
	\end{aligned}
\end{equation*}
\begin{equation*}
	\begin{aligned}
		d_2&=(\beta+2)\alpha+2\beta+1,&&&d_3&=-(\alpha+\beta+2\alpha\beta),\\
		n_4&=-\left(\frac16+\left(\beta-\frac12\right)\alpha\right),&&&d_4&=\alpha\beta.
	\end{aligned}
\end{equation*}
An analytical investigation of the stability function of MPRK43($\alpha,\beta$) for arbitrary finite sized systems $N\times N$ is outside the scope of this paper. This is due to the fact  $R$ is a rational map in $\alpha,\beta$ and $z\in \C^-$ and by expanding $R$ such that the coefficients of $z^j$ are polynomials in $\alpha$ and $\beta$, we see that the maximum total degree appearing in the numerator and denominator is $\deg(\alpha^2\beta^2z^4)=8$. But it can be seen that $\abs{R(z)}<1$ is not true for all $(\alpha,\beta)$ satisfying \eqref{eq:cond:alpha,beta}, see for instance Figure \ref{fig:MPRK43Stab}, where the stability region is highlighted in grey for $(\alpha,\beta)=(\tfrac12,\tfrac23)$. 
\begin{figure}[tb]
	\begin{subfigure}[b]{0.5\textwidth}
		\includegraphics[height=7.8cm]{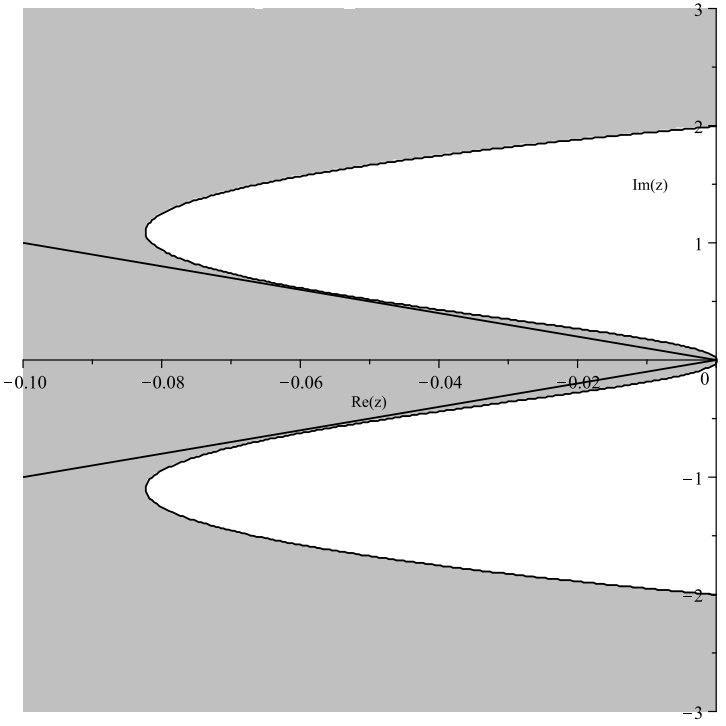}
		\caption{$I_1=[-0.1,0]\times [-3,3]$}
		\label{fig:GL_1}
	\end{subfigure}%
	\begin{subfigure}[b]{0.5\textwidth}
		\includegraphics[height=8cm]{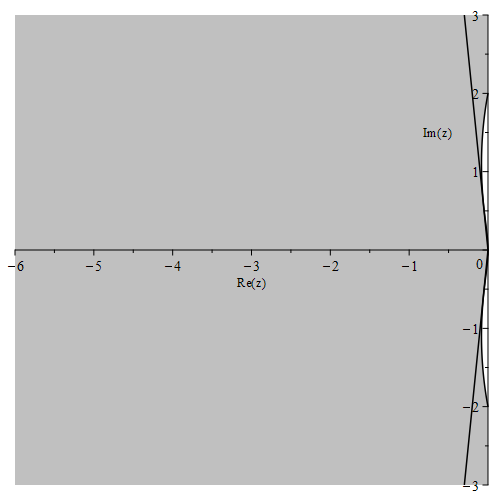} 
		\caption{$I_2=[-6,0]\times [-3,3]$}
		\label{fig:ESP_1}
	\end{subfigure}%
	\caption{The stability domain of MPRK43($\tfrac12,\tfrac23)$ is highlighted in grey and depicted on two rectangles $I_1$ and $I_2$. The two linear segments are given by \mbox{$\Re(z)=-10\abs{\Im(z)}$}. The angle between the two linear segments is approximately $168.58^\circ$.}
	\label{fig:MPRK43Stab}
\end{figure}  

Nevertheless,  if we restrict ourselves to the case outlined in Remark \ref{rem:Phragmen}, i.\,e.\ we investigate $R$ for systems with $N\leq 4$, we can prove that MPKR43($\alpha,\beta$) schemes are stable as we will find that $\abs{R(z)}<1$ on \[S^\circ=\left\{z\in \C^-\setminus\{0\}\mid \arg(z)\in(\tfrac34\pi,\tfrac54\pi)\right\}.\]
Note that for $(\alpha,\beta)=(\tfrac12,\tfrac23)$ the largest cone inside the stability region  contains the sector $S$, see Figure \ref{fig:MPRK43Stab}. Hence, to see the instability properties of MPRK43$(\tfrac12,\tfrac23)$ numerically using a linear positive PDS \eqref{eq:PDS_Sys}, Theorem \cite[Theorem 10]{StabMetz} and a small calculation reveals that we would have to consider a proper Metzler matrix $\bA\in \R^{N\times N}$ with $N\geq 32$. 
\begin{prop}\label{prop:MPRK43(alpha,beta)}
	Let $R$ be defined by \eqref{eq:Stab_fun_MPRK43(alpha,beta)}. Then $R(0)=1$, and, if \begin{equation}\label{eq:assumption.prop.MPRK43(alpha,beta)}
		\begin{aligned} d_3d_4-n_3n_4&\leq 0,&&&
			n_3^2-d_3^2&\leq 0,\\
			n_1n_4-n_2n_3-d_1d_4+d_2d_3&\leq 0&&&\\
			n_2^2-d_2^2-2n_4+2d_4&\leq 0,&&&
			-n_1n_2+d_1d_2+n_3-d_3&\leq 0,
		\end{aligned}
	\end{equation}
	then $\lvert R(z)\rvert<1$ holds true for all $z\in S^\circ=\{z\in \C^-\setminus\{0\}\mid \arg(z)\in(\frac34\pi,\frac54\pi)\}$.
\end{prop}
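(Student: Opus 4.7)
The plan is to mimic the proof of Proposition \ref{prop:Stab_MPRK32} but apply the Phragmén--Lindelöf principle to the open sector $S^\circ$ instead of the full left half-plane, as suggested by Remark \ref{rem:Phragmen}. The value $R(0)=1$ is immediate since $n_0=d_0=1$. The denominator of $R$ vanishes only at $z\in\{1/\alpha,1/\beta,1\}\tm \R_{>0}$, so $R$ is holomorphic on a neighborhood of $\overline{S^\circ}$ and bounded at infinity (as $\deg N\leq\deg D$). Hence, once $\lvert R(z)\rvert\leq 1$ is established on the two boundary rays $\{te^{\pm 3\pi i/4}\mid t\geq 0\}$, Phragmén--Lindelöf transfers the estimate to all of $S^\circ$, and the maximum principle strengthens it to $\lvert R(z)\rvert<1$ since $R$ is non-constant. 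Because the coefficients $n_j,d_j$ are real, $R(\bar z)=\overline{R(z)}$ and it suffices to inspect one ray.

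Parameterizing the ray as $z=t(-1+i)$ with $t\geq 0$, the relevant powers simplify to $z^2=-2it^2$, $z^3=2t^3(1+i)$, and $z^4=-4t^4$. Splitting $N(z)=\sum n_j z^j$ and $D(z)=\sum d_j z^j$ into real and imaginary parts therefore gives a real polynomial
\[
P(t)=\lvert D(z)\rvert^2-\lvert N(z)\rvert^2=\sum_{k=0}^{8}c_k t^k,
\]
and it suffices to prove $P(t)\geq 0$ for all $t\geq 0$. I would do so in the strongest possible sense, namely by showing $c_k\geq 0$ coefficient by coefficient.

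A direct expansion yields $c_0=0$, $c_1=2(n_1-d_1)=2$, and $c_2=2(d_1-n_1)(d_1+n_1)=2(2\alpha+2\beta+3)>0$ independently of the assumptions. The five intermediate coefficients turn out to be positive multiples of the expressions in \eqref{eq:assumption.prop.MPRK43(alpha,beta)}:
\begin{align*}
	c_3&=4\bigl((d_3-n_3)+(n_1n_2-d_1d_2)\bigr),\\
	c_4&=-4(n_2^2-d_2^2-2n_4+2d_4),\\
	c_5&=-8(n_1n_4-n_2n_3-d_1d_4+d_2d_3),\\
	c_6&=-8(n_3^2-d_3^2),\\
	c_7&=-16(d_3d_4-n_3n_4),
\end{align*}
so that the hypotheses immediately give $c_k\geq 0$ for $k=3,\dotsc,7$. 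For the top coefficient $c_8=16(d_4-n_4)(d_4+n_4)$ I would use the explicit forms $d_4=\alpha\beta$, $n_4=-\bigl(\tfrac16+(\beta-\tfrac12)\alpha\bigr)$ to compute $d_4+n_4=\tfrac\alpha2-\tfrac16$ and $d_4-n_4=2\alpha\beta-\tfrac\alpha2+\tfrac16$; both are nonnegative because every branch of \eqref{eq:cond:alpha,beta} enforces $\alpha\geq \tfrac13$, so $c_8\geq 0$ as well.

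The only real obstacle is bookkeeping: expanding $\lvert D(z)\rvert^2-\lvert N(z)\rvert^2$ in the coefficients $n_j,d_j$ involves many cross terms and one must carefully identify each $c_k$ with one of the prescribed combinations. The calculation is mechanical and can be checked symbolically, as in the companion repository. Once coefficient-wise nonnegativity is in hand, $P(t)\geq 0$ combined with the Phragmén--Lindelöf and maximum-principle argument from Proposition \ref{prop:Stab_MPRK32} concludes $\lvert R(z)\rvert<1$ on $S^\circ$.
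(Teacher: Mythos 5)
Your proof follows essentially the same route as the paper's: restrict to the boundary rays of the sector, expand the difference of the squared moduli of denominator and numerator as a polynomial in the radial variable, identify each coefficient with one of the assumed sign conditions (your $c_k$ agree with the paper's up to the rescaling $r=\sqrt{2}\,t$ and an overall sign, since the paper works with $\lvert N\rvert^2-\lvert D\rvert^2<0$), and then invoke the Phragm\'en--Lindel\"of principle together with the maximum principle exactly as in Proposition \ref{prop:Stab_MPRK32}. One small imprecision: for $d_4-n_4=\alpha\bigl(2\beta-\tfrac12\bigr)+\tfrac16\geq 0$ the bound $\alpha\geq\tfrac13$ alone does not suffice --- you also need $\beta\geq\tfrac14$, which does follow from \eqref{eq:cond:alpha,beta} and is the bound the paper uses explicitly at the corresponding step of its argument.
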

\begin{proof}
	The following rather technical computations can be reproduced using our repository \cite{IOEgit}.
	First, $n_0=d_0=1$ yields $R(0)=1.$ Next, we make use of Remark \ref{rem:Phragmen} and investigate $R$ on $\partial S$, i.\,e.\ we substitute $z=re^{\ii\varphi}$ with $r>0$ and $\varphi\in \{\frac34\pi,\frac54\pi\}$.
	We obtain\begin{equation*}
		\abs{R(re^{\ii\varphi})}^2=\frac{\left(\sum_{j=0}^4r^jn_j\cos(j\varphi)\right)^2+\left(\sum_{j=0}^4r^jn_j\sin(j\varphi)\right)^2}{\left(\sum_{j=0}^4r^jd_j\cos(j\varphi)\right)^2+\left(\sum_{j=0}^4r^jd_j\sin(j\varphi)\right)^2}.
	\end{equation*}
	Next, note that $\abs{R(re^{\ii\varphi})}^2<1$ is equivalent to 
	\begin{equation} \label{eq:largesum}
		\begin{aligned}
			&\left(\sum_{j=0}^4r^jn_j\cos(j\varphi)\right)^2+\left(\sum_{j=0}^4r^jn_j\sin(j\varphi)\right)^2 -\left(\sum_{j=0}^4r^jd_j\cos(j\varphi)\right)^2-\left(\sum_{j=0}^4r^jd_j\sin(j\varphi)\right)^2\\
			=&(n_4^2 - d_4^2)r^8 + (2n_3n_4 - 2d_3d_4)\cos(\varphi)r^7 + (4(n_2n_4 - d_2d_4)\cos(\varphi)^2 \\&- 2n_2n_4 + n_3^2 + 2d_2d_4 - d_3^2)r^6 + (8(n_1n_4 - d_1d_4)\cos(\varphi)^3 \\&+ (-6n_1n_4 + 2n_2n_3 + 6d_1d_4 - 2d_2d_3)\cos(\varphi))r^5 + (16(n_4 - d_4)\cos(\varphi)^4 \\&+ 4(n_1n_3 - d_1d_3 - 4n_4 + 4d_4)\cos(\varphi)^2 - 2n_1n_3 + n_2^2 + 2d_1d_3 - d_2^2 + 2n_4 - 2d_4)r^4 \\&+ (8(n_3 - d_3)\cos(\varphi)^3 + (2n_1n_2 - 2d_1d_2 - 6n_3 + 6d_3)\cos(\varphi))r^3 \\&+ (4(n_2 - d_2)\cos(\varphi)^2 + n_1^2 - d_1^2 - 2n_2 + 2d_2)r^2 + (2n_1 - 2d_1)\cos(\varphi)r<0.
		\end{aligned}
	\end{equation}
	Dividing by $r>0$ and exploiting $\cos(\tfrac{3}{4}\pi)=\cos(\tfrac54\pi)=-\sqrt{\tfrac12}$ we have to prove that
	\begin{equation*}
		\begin{aligned}
			&(n_4^2 - d_4^2)r^7 + (-n_3n_4 + d_3d_4)\sqrt{2}r^6 + (n_3^2 - d_3^2)r^5 + (n_1n_4 - n_2n_3 - d_1d_4 + d_2d_3)\sqrt2r^4\\
			&+ (n_2^2 - d_2^2 - 2n_4 + 2d_4)r^3 + (-n_1n_2 + d_1d_2 + n_3 - d_3)\sqrt2r^2 \\&+ (n_1^2 - d_1^2)r + (-n_1 + d_1)\sqrt2<0.
		\end{aligned}
	\end{equation*}
	The coefficients of $r^j$ can be written in terms of $\alpha$ and $\beta$ resulting in
	\begin{equation}\label{eq:cvalues}
		\begin{aligned}
			c_7&=n_4^2-d_4^2=\left(\frac13\alpha-\alpha^2\right)\beta+\frac{\alpha^2}{4}-\frac{\alpha}{6}+\frac{1}{36},&&&
			c_6&=d_3d_4-n_3n_4,\\
			c_5&=n_3^2-d_3^2,&&&
			c_4&=n_1n_4-n_2n_3-d_1d_4+d_2d_3,\\
			c_3&=n_2^2-d_2^2-2n_4+2d_4,&&&	
			c_2&=-n_1n_2+d_1d_2+n_3-d_3,\\
			c_1&=n_1^2-d_1^2=-(2\alpha+2\beta+3)<0,&&&
			c_0&=d_1-n_1=-1<0.
		\end{aligned}
	\end{equation}
	Let us consider $c_7$, which is linear in $\beta$. The coefficient of $\beta$ is a parabola in $\alpha$ and vanishes for $\alpha=0$ or $\alpha=\frac13.$ Since for $\alpha=\frac14\in (0,\frac13)$, the coefficient reads $\frac{1}{12}-\frac{1}{16}>0$, we can conclude that the coefficient of $\beta$ is negative for all $\alpha\geq\frac13.$ Now since $\beta\geq\frac14$, the biggest value of $c_7$ can be obtained for $\beta=\frac{1}{4}$, resulting in
	\begin{equation*}
		c_7\left(\alpha,\frac14\right)=-\frac{\alpha}{12}+\frac{1}{36}\leq -\frac{\frac13}{12}+\frac{1}{36}=0.
	\end{equation*}
	Additionally, we assumed that $c_2,\dotsc,c_6\leq0$, so that $\abs{R(z)}<1$ on $S^\circ $ follows from Remark \ref{rem:Phragmen}.
\end{proof}
\begin{rem}
	We want to note that the assumption \eqref{eq:assumption.prop.MPRK43(alpha,beta)} of Proposition \ref{prop:MPRK43(alpha,beta)} can be easily verified for a given $\alpha$ and $\beta$. Indeed, it can be demonstrated that the assumption is always met for $\alpha,\beta$ satisfying \eqref{eq:cond:alpha,beta}, however, an analytical proof is outside the scope of this paper. Instead we present 3D plots of \eqref{eq:assumption.prop.MPRK43(alpha,beta)} in the Appendix \ref{se_Appendix}, see Figures \ref{fig:c2} - \ref{fig:c6}.
\end{rem}
From this result, we can follow as a direct consequence of Theorem \ref{th_stable}:
\begin{cor}
	\begin{enumerate}
		\item Let $\by^*$ be a positive steady state of the differential equation \eqref{eq_basic} with $N\leq 4$. Then, $\by^*$ is a stable fixed point of MPRK($\alpha,\beta$) for all $\dt>0$ and $\alpha,\beta$ satisfying \eqref{eq:cond:alpha,beta} and \eqref{eq:assumption.prop.MPRK43(alpha,beta)}.
		\item If $\by^*$ is  the unique steady state of the initial value problem \eqref{eq_basic} , \eqref{eq_inital} with $N\leq 4$ and if $\by^0$ is close enough to $\by^*$,  then  the MPRK($\alpha,\beta$) iterates converge towards $\by^*$ for  all $\dt>0$  and $\alpha,\beta$ satisfying \eqref{eq:cond:alpha,beta} and \eqref{eq:assumption.prop.MPRK43(alpha,beta)}.
	\end{enumerate}
\end{cor}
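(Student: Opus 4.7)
The plan is to mimic the strategy used for MPRK(3,2) in Proposition \ref{prop:Stab_MPRK32}, but this time invoking the sector version of the Phragm\'en--Lindel\"of principle highlighted in Remark \ref{rem:Phragmen}. I would first observe that the denominator of $R$, when written as a product of the factors $(1-\alpha z)$, $(1-\beta z)$ and $(1-z)$ appearing in the unsimplified expression above \eqref{eq:Stab_fun_MPRK43(alpha,beta)}, cannot vanish on $\overline S$ because $\alpha,\beta>0$ and $\Re(z)\leq 0$ on $S$. Hence $R$ is holomorphic and bounded on $S^\circ$, so if I can show $|R(z)|\leq 1$ on the two boundary rays $\partial S$, the maximum principle (with $R$ nonconstant) will upgrade this to $|R(z)|<1$ throughout $S^\circ$.

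The equality $R(0)=1$ is immediate from $n_0=d_0=1$. For the boundary I would parametrize $z=re^{\ii\varphi}$ with $r>0$ and $\varphi\in\{\tfrac34\pi,\tfrac54\pi\}$ and express $|R(re^{\ii\varphi})|^2<1$ by expanding the squared magnitudes of numerator and denominator of \eqref{eq:Stab_fun_MPRK43(alpha,beta)} into real and imaginary parts. Since $\cos(\tfrac34\pi)=\cos(\tfrac54\pi)=-\tfrac{1}{\sqrt2}$ and, after reducing $\cos(j\varphi)$ to a polynomial in $\cos\varphi$ via Chebyshev identities, the inequality depends only on $\cos\varphi$, both rays produce the same polynomial inequality in $r$ of the form $\sum_{j=0}^{7}c_j(\alpha,\beta)\,r^j<0$, with $c_j$ as in \eqref{eq:cvalues}.

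The task then reduces to proving $c_j\leq 0$ for $j=0,\dots,7$, with strict inequality at least somewhere. Here $c_0=-1$ and $c_1=-(2\alpha+2\beta+3)$ are clearly negative, and the five hypotheses \eqref{eq:assumption.prop.MPRK43(alpha,beta)} directly supply $c_2,\dots,c_6\leq 0$. The main obstacle is to show unconditionally, using only the admissible range \eqref{eq:cond:alpha,beta}, that the leading coefficient $c_7(\alpha,\beta)=\bigl(\tfrac{\alpha}{3}-\alpha^2\bigr)\beta+\tfrac{\alpha^2}{4}-\tfrac{\alpha}{6}+\tfrac{1}{36}$ is nonpositive.

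To deal with $c_7$, I would exploit its linearity in $\beta$: the $\beta$-slope $\alpha(\tfrac13-\alpha)$ is nonpositive for $\alpha\geq\tfrac13$, precisely the range enforced by \eqref{eq:cond:alpha,beta}. Consequently $c_7$ is maximized over admissible $\beta$ at the smallest such $\beta$, and the uniform bound $\beta\geq\tfrac14$ (which is readily verified by inspecting the three cases in \eqref{eq:cond:alpha,beta}) is sufficient; substituting $\beta=\tfrac14$ yields $c_7\leq\tfrac{1}{36}-\tfrac{\alpha}{12}\leq 0$ for $\alpha\geq\tfrac13$. Combined with the Phragm\'en--Lindel\"of reduction, this would finish the proof.
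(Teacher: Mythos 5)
Your argument is, in substance, the paper's own proof of Proposition \ref{prop:MPRK43(alpha,beta)}: the same reduction to the two boundary rays of $S$ via the sector version of the Phragm\'en--Lindel\"of principle from Remark \ref{rem:Phragmen}, the same polynomial inequality in $r$ with the coefficients $c_0,\dotsc,c_7$ of \eqref{eq:cvalues}, the same use of the hypotheses \eqref{eq:assumption.prop.MPRK43(alpha,beta)} to dispose of $c_2,\dotsc,c_6$, and the same treatment of $c_7$ (nonpositive $\beta$-slope for $\alpha\geq\tfrac13$, then $\beta\geq\tfrac14$ gives $c_7\leq\tfrac1{36}-\tfrac{\alpha}{12}\leq 0$). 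That analytic core is correct.

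The gap is that $\lvert R(z)\rvert<1$ on $S^\circ$ is the Proposition, not the Corollary you were asked to prove: the Corollary asserts stability of the fixed point and local convergence of the iterates, and your proposal never makes that transition. Three ingredients are needed and none appears. (i) The identification $\sigma(\bD\bg(\by^*))=\{R(\lambda\dt)\mid\lambda\in\sigma(\bA)\}$, so that the bound on $R$ controls the $N-k$ eigenvalues of the Jacobian other than the eigenvalue $1$ coming from $\ker(\bA)$. (ii) The reason a bound on the sector $S$ suffices at all: for a proper Metzler matrix with $\sigma(\bA)\tm\C^-$ and $N\leq 4$, every nonzero eigenvalue lies in $S$, so $\lambda\dt\in S^\circ$ for all $\dt>0$. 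This is the only place the hypothesis $N\leq 4$ enters the Corollary, and you never invoke it. (iii) For part 2, part \ref{it:Thmb} of Theorem \ref{th_stable} additionally requires that $\bg$ conserves all linear invariants, which for this scheme follows from \eqref{eq:Phi4yn} via $\bn^T\bA=\bzero$; you do not address this. With (i)--(iii) supplied, both claims follow from Theorem \ref{th_stable} exactly as the paper indicates; without them the Corollary is not established.
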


\subsection*{Stability of MPRK43($\gamma$)}
Already in \cite{kopecz2019existence} another MPRK43($\gamma$) has been proposed. It is based on
\begin{equation}
	\begin{aligned}
		\def\arraystretch{1.2}
		\begin{array}{c|ccc}
			0 &  & & \\
			\frac23 & \frac23 & & \\
			\frac23 & \frac23-\frac{1}{4\gamma}& \frac{1}{4\gamma}& \\
			\hline
			& \frac14 &\frac34-\gamma & \gamma
		\end{array}
	\end{aligned}
\end{equation}
with $\frac38\leq \gamma \leq \frac34.$ The scheme reads
{\small{\begin{equation}
	\label{eq:MPRK43-family_delta}
	\begin{aligned}
	 y^{(1)}_i &= y^n_i
		+  a_{21} \dt \sum_{j=1}^N \left( \prod_{ij}\bigl( \by^n \bigr)   \frac{y^{(1)}_j}{y^n_j }
		- \dest_{ij}\bigl( \by^n \bigr) \frac{y^{(1)}_i}{y^n_i}
		\right),
		\\
		y^{(2)}_i &= y^n_i
		+\dt \sum_{j=1}^N
		\Biggl(\left(a_{31} \prod_{ij}\bigl(\by^n\bigr)+ a_{32} \prod_{ij} \bigl(\by^{(1)}\bigr) \right) \frac{  y_j^{(2)}
		}{\bigl(y_j^{(1)}\bigr)^{\frac1p } \bigl(y_j^n\bigr)^{1-\frac1p} }-\left(a_{31} \dest_{ij}\bigl(\by^n\bigr)+ a_{32} \dest_{ij} \bigl(\by^{(1)}\bigr) \right) \frac{  y_i^{(2)}
		}{\bigl(y_i^{(1)}\bigr)^{\frac1p } \bigl(y_i^n\bigr)^{1-\frac1p} }\Biggr)
		\\
		y^{(3)}_i &= y_i^n + \dt \sum_{j=1}^N 
		\Biggl(\left(
		\beta_1 \prod_{ij} \bigl( \by^n\bigr) +\beta_2 \prod_{ij} \bigl(\by^{(1)}\bigr) \right)
		\frac{y^{(3)}_j}{\bigl(y_j^{(1)} \bigr)^{\frac1q}
			\bigl(y_j^n\bigr)^{1-\frac1q}}
	-
		\left( \beta_1 \dest_{ij} \bigl( \by^n\bigr) +\beta_2 \dest_{ij} \bigl(\by^{(1)}\bigr)  \right) \frac{y^{(3)}_i}{\bigl(y_i^{(1)} \bigr)^{\frac1q}
			\bigl(y_i^n\bigr)^{1-\frac1q}}\Biggr)
		\\
		y^{n+1}_i &= y^n_i
		+ \dt \sum_{j=1}^N \Biggl(
		\left( b_1 \prod_{ij}\bigl( \by^n \bigr) +b_2\prod_{ij}\bigl( \by^{(1)} \bigr)
		+ b_3 \prod_{ij}\bigl( \by^{(2)} \bigr)
		\right) \frac{y^{n+1}_j}{y^{(3)}_j}
				- \left( b_1 \dest_{ij}\bigl( \by^n \bigr) +b_2\dest_{ij}\bigl( \by^{(1)} \bigr)
		+ b_3 \dest_{ij}\bigl( \by^{(2)} \bigr)
		\right) \frac{y^{n+1}_i}{y^{(3)}_i}
		\Biggr),
	\end{aligned}
\end{equation}}}
where we again renamed the stages to be consistent with our framework. Moreover, the parameters above are given by
\begin{equation*}
	\begin{aligned}
		p&=3a_{21}(a_{31}+a_{32})b_3=\frac43\gamma,&
		q&=a_{21}=\frac23,\\
		\beta_1&=1-\frac{1}{2a_{21}}=\frac14,& \beta_2&=1-\beta_1=\frac34.
	\end{aligned}
\end{equation*}

Following exactly the steps from before, we obtain $\bm\Phi_i=\bzero$ for $i\in\{1,2,3,n+1\}$ where
\begin{equation*}
	\begin{aligned}
		\bm \Phi_1&=\by^n+a_{21}\dt\bA\by^{(1)}-\by^{(1)},\\
		\bm \Phi_2&=\by^n+\dt\bA \diag(\by^{(2)})(\diag(\by^{(1)}))^{-\frac1p}(\diag(\by^{n}))^{\frac1p-1}(a_{31}\by^n+a_{32}\by^{(1)})-\by^{(2)},\\
		\bm \Phi_{3}&=\by^n+\dt\bA \diag(\by^{(3)})(\diag(\by^{(1)}))^{-\frac1q}(\diag(\by^{n}))^{\frac1q-1}(\beta_1\by^n+\beta_2\by^{(1)})-\by^{(3)},\\
		\bm \Phi_{n+1}&=\by^n+\dt\bA \diag(\by^{n+1})(\diag(\by^{(3)}))^{-1}(b_1\by^n+b_{2}\by^{(1)}+b_3\by^{(2)})-\by^{n+1}.
	\end{aligned}
\end{equation*}
Note that $\bn^T\bA=\bzero$ again leads to $\bn^T\by^n=\bn^T\by^{n+1}=\bn^T\bg(\by^n)$ proving that the scheme conserves all linear invariants. 
Furthermore, $\by^n=\by^{(k)}=\by^{n+1}=\by^*$ with $k=1,2,3$ solves $\bm\Phi_1=\bm\Phi_2=\bm\Phi_3=\bm\Phi_{n+1}=\bzero$ so that also this scheme is steady state preserving for positive $\by^*\in \ker(\bA)$. 
Hence, we need to compute $\bD\bg(\by^*)$ to apply Theorem \ref{th_stable} and Theorem \ref{Thm:_Asym_und_Instabil}. We get $
\bD^*_n\bm\Phi_1=\bI$ and $\bD^*_1\bm\Phi_1=a_{21}\dt\bA-\bI,$
which leads to
\[
\bD^*\by^{(1)}=(\bI-a_{21}\dt\bA)^{-1}.\]
Next, using \eqref{eq:ProductRuleDiag} we have
\begin{equation*}
	\begin{aligned}
		\bD^*_n\bm\Phi_2&=\bI+\dt\bA\left(\left(\frac1p-1\right)(a_{31}+a_{32})+a_{31}\right),\\ \bD^*_1\bm\Phi_2&=\dt\bA\left(-\frac1p(a_{31}+a_{32})+a_{32}\right),\\ \bD^*_2\bm\Phi_2&=\dt\bA(a_{31}+a_{32})-\bI,
	\end{aligned} 
\end{equation*}
which determines $\bD^*\by^{(2)}$ but we omit to write it down here for a better reading flow.
Furthermore, 
\begin{equation*}
	\begin{aligned}
		\bD^*_n\bm\Phi_{3}&=\bI+\dt\bA\left(\left(\frac1q-1\right)(\beta_1+\beta_2)+\beta_1\right),\\ \bD^*_1\bm\Phi_{3}&=\dt\bA\left(-\frac1q(\beta_1+\beta_2)+\beta_2\right),\\ \bD^*_{2}\bm\Phi_{3}&=\bzero,\\ \bD^*_{3}\bm\Phi_{3}&=\dt\bA(\beta_1+\beta_2)-\bI.
	\end{aligned} 
\end{equation*}
Finally
\begin{equation*}
	\begin{aligned}
		\bD^*_n\bm\Phi_{n+1}&=\bI+b_1\dt\bA,\quad \bD^*_1\bm\Phi_{n+1}=b_2\dt\bA,\quad \bD^*_2\bm\Phi_{n+1}=b_3\dt\bA\\
		\bD^*_{3}\bm\Phi_{n+1}&=-\dt\bA(b_1+b_2+b_3),\quad \bD^*_{n+1}\bm \Phi_{n+1}=\dt\bA(b_1+b_2+b_3)-\bI.
	\end{aligned}
\end{equation*}
The stability function is thus determined by \eqref{eq:FormularJacobian} reading
\begin{equation}\label{eq:Stab_fun_MPRK43(gamma)}
	\begin{aligned}
		R(z)=&\frac{1}{1-z}\Biggl(1+zb_1+\frac{zb_2}{1-za_{21}}+\frac{zb_3\Biggl(1+z\left(\left(\frac1p-1\right)(a_{31}+a_{32})+a_{31}\right)+\frac{z(-\frac1p(a_{31}+a_{32})+a_{32})}{1-za_{21}}  \Biggr)}{1-z(a_{31}+a_{32})}\\&\hspace{1cm}-\frac{z\left(1+z\left(\left(\frac1q-1\right)(\beta_1+\beta_2)+\beta_1\right)+\frac{z\left(-\frac1q\left(\beta_1+\beta_2\right)+\beta_2\right)}{1-za_{21}} \right)}{1-z(\beta_1+\beta_2)}\Biggr)\\
		=&\frac{1+\frac14z+\frac{z(\frac34-\gamma)}{1-\frac23z}+\frac{z\gamma}{1-\frac23z}\left(1+\frac{z}{4\gamma}+\frac{-\frac{z}{4\gamma}}{1-\frac23z}  \right)-\frac{z\left(1+\frac34z+\frac{-\frac34z}{1-\frac23z} \right)}{1-z}}{1-z}=\frac{-5z^4 + 7z^3 + 23z^2 - 42z + 18}{2(2z-3)^2(z-1)^2}.
	\end{aligned}
\end{equation}
Note that the stability function is independent of the parameter $\gamma$, so that the following investigation is valid for all $\frac38\leq \gamma\leq \frac34.$
\begin{prop}
	Let $R$ be defined by \eqref{eq:Stab_fun_MPRK43(gamma)}. Then $\lvert R(z)\rvert<1$ holds true for all \mbox{$z\in \C^-\setminus\{0\}$} and $R(0)=1$.
\end{prop}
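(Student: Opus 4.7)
The structure of the argument should exactly parallel the proof of Proposition 3.3 (Stability of MPRK(3,2)), using Remark 3.4 to reduce the stability analysis to the behavior of $R$ on the imaginary axis.

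First, a direct substitution gives $R(0) = \frac{18}{2 \cdot 9 \cdot 1} = 1$. Next, I would observe that the poles of $R$ lie at $z=1$ and $z=3/2$, both in the open right half-plane, so $R$ is holomorphic on $\mathbb{C}^-$. Since $R$ is a rational function with $\deg(N) = \deg(D) = 4$, it is also bounded at infinity, so the hypotheses of Remark \ref{rem:Phragmen} are satisfied. Consequently, it suffices to prove $\lvert R(z)\rvert \leq 1$ on the imaginary axis; combined with the maximum principle (using that $R$ is non-constant), this yields strict inequality $\lvert R(z)\rvert < 1$ throughout $\mathbb{C}^- \setminus \{0\}$.

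For the calculation on the imaginary axis, substitute $z = \ii y$ with $y \in \R$ and separate real and imaginary parts of the numerator:
\begin{equation*}
	-5(\ii y)^4 + 7(\ii y)^3 + 23(\ii y)^2 - 42(\ii y) + 18 = (18 - 23y^2 - 5y^4) - \ii(42 y + 7y^3).
\end{equation*}
For the denominator, I would exploit the factorization and use $\lvert 2\ii y - 3\rvert^2 = 9 + 4y^2$ and $\lvert \ii y - 1\rvert^2 = 1 + y^2$ to obtain
\begin{equation*}
	\lvert 2(2z-3)^2(z-1)^2\rvert^2\Big|_{z=\ii y} = 4(9+4y^2)^2(1+y^2)^2.
\end{equation*}
The inequality $\lvert R(\ii y)\rvert^2 \leq 1$ is then equivalent to the statement that
\begin{equation*}
	4(9+4y^2)^2(1+y^2)^2 - (18 - 23y^2 - 5y^4)^2 - (42 y + 7y^3)^2 \geq 0.
\end{equation*}
Both terms on the left expand to polynomials in $y^2$; after cancellation I expect the constant term and the coefficient of $y^2$ to vanish, leaving a polynomial in $y^2$ with nonnegative coefficients. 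Any positive coefficient on $y^4$ (or higher) then gives $\lvert R(\ii y)\rvert < 1$ for $y \neq 0$.

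The main (and essentially only) obstacle is the bookkeeping in this polynomial expansion; everything else is a routine application of the technique already established in the proof of Proposition \ref{prop:Stab_MPRK32}. Once the nonnegativity of the difference polynomial is verified, Phragmén-Lindelöf together with the maximum modulus principle finishes the proof just as in that earlier argument.
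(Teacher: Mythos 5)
Your proposal is correct and follows essentially the same route as the paper: the paper likewise reduces the problem to the sign of $\lvert N(\ii y)\rvert^2-\lvert D(\ii y)\rvert^2$ as a polynomial in $y^2$ on the imaginary axis (invoking the Phragm\'en--Lindel\"of reasoning of Remark \ref{rem:Phragmen} through a cited proposition), only organizing the bookkeeping via the generic coefficient formulas $c_{2k}$ rather than a direct expansion. For the record, the expansion you defer works out exactly as you predict: the constant and $y^2$ terms cancel and the difference equals $27y^4+137y^6+39y^8>0$ for $y\neq 0$, which matches the paper's values $c_4=-\tfrac{1}{12}$, $c_6=-\tfrac{137}{324}$, $c_8=-\tfrac{13}{108}$ after normalizing by $18^2$.
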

\begin{proof}
	A straightforward calculation yields
	\begin{equation*}
		R(z)=\frac{-\frac{5}{18}z^4 + \frac{7}{18}z^3 + \frac{23}{18}z^2 - \frac{42}{18}z + 1}{(\frac23z-1)^2(z-1)^2}=\frac{\sum_{j=0}^4a_jz^j}{\sum_{j=0}^4b_jz^j},
	\end{equation*}
	where
	\begin{equation*}
		\begin{aligned}
			a_0&=1,&
			a_1&=- \frac{7}{3},&
			a_2&=\frac{23}{18},&
			a_3&=\frac{7}{18},&
			a_4&=-\frac{5}{18},\\
			b_0&=1,&
			b_1&=-\frac{10}{3},&
			b_2&=\frac{37}{9},&
			b_3&=-\frac{20 }{9},&
			b_4&=\frac49.
		\end{aligned}
	\end{equation*}
	Hence $R(0)=1$ and, additionally, we follow the proof of \cite[Prop. 3.6]{huang2022stability}, i.\,e.\ all we have to prove is that
	\begin{equation*}
		\begin{aligned}
			(a_4^2-&b_4^2)y^8+(-2a_2a_4+a_3^2+2b_2b_4-b_3^2)y^6\\
			&+(2a_4-2a_1a_3+a_2^2+2b_1b_3-b_2^2-2b_4)y^4	+(-2a_2+a_1^2-b_1^2+2b_2)y^2<0
		\end{aligned}
	\end{equation*} for all $y\neq 0$. We denote by $c_k$ the above coefficient of $y^k$. A small calculation reveals
	\begin{equation*}
		\begin{aligned}
			c_8&=-\frac{13}{108}<0,&&
			c_6&=-\frac{137}{324}<0,&&
			c_4&=-\frac{1}{12}<0,&&
			c_2&=0,
		\end{aligned}
	\end{equation*}
	which finishes the proof due to the even powers of $y$.
\end{proof}

From this result, we can follow as a direct consequence of Theorem \ref{th_stable}:
\begin{cor}
	\begin{enumerate}
		\item Let $\by^*$ be a positive steady state of the differential equation \eqref{eq_basic}. Then, $\by^*$ is a stable fixed point of MPRK($\gamma$) for all $\dt>0$ and $\frac{3}{8} \leq \gamma \leq \frac{3}{4}$.
		\item If $\by^*$ is  the unique steady state of the initial value problem \eqref{eq_basic}, \eqref{eq_inital},  then,  the scheme MPRK($\gamma$)  converges towards $\by^*$ for  all $\dt>0$  and $\frac{3}{8} \leq \gamma \leq \frac{3}{4}$, if $\by^0$ is close enough to $\by^*$.
	\end{enumerate}
\end{cor}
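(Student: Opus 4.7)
The plan is to mimic the proof of Proposition \ref{prop:Stab_MPRK32} and exploit Remark \ref{rem:Phragmen}. First I would divide the numerator and denominator of $R$ by $18$ so as to write
\[R(z)=\frac{N(z)}{D(z)}=\frac{\sum_{j=0}^{4}a_jz^j}{\sum_{j=0}^{4}b_jz^j}\]
with $a_0=b_0=1$, which immediately gives $R(0)=1$. The zeros of the denominator $2(2z-3)^2(z-1)^2$ lie at $z=1$ and $z=\tfrac{3}{2}$, both strictly in the right half-plane, so $R$ is holomorphic on $\C^-$ and in particular $\deg N\leq\deg D$ with no singularities in the closed left half-plane other than where we need them.

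By Remark \ref{rem:Phragmen}, it is therefore enough to establish $\lvert R(z)\rvert\leq 1$ on the imaginary axis: the Phragmén--Lindelöf argument extends the bound to all of $\C^-$, and the maximum modulus principle upgrades it to the strict inequality $\lvert R(z)\rvert<1$ on $\C^-\setminus\{0\}$ since $R$ is non-constant. Setting $z=\ii y$ with $y\in\R$, the inequality $\lvert R(\ii y)\rvert^2\leq 1$ is equivalent to $\lvert N(\ii y)\rvert^2-\lvert D(\ii y)\rvert^2\leq 0$. Separating real and imaginary parts of $N(\ii y)$ and $D(\ii y)$ and squaring, every odd power of $y$ cancels and one obtains a polynomial in $y^2$ of degree at most $4$, namely
\[\lvert N(\ii y)\rvert^2-\lvert D(\ii y)\rvert^2=c_8y^8+c_6y^6+c_4y^4+c_2y^2,\]
where $c_8=a_4^2-b_4^2$, $c_6=-2a_2a_4+a_3^2+2b_2b_4-b_3^2$, and similar expressions for $c_4$ and $c_2$ (matching the pattern used in the proof of \cite[Prop.\ 3.6]{huang2022stability}).

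The remaining task is a direct coefficient check: plugging in the specific values of $a_j$ and $b_j$ obtained from $R$ and verifying $c_8,c_6,c_4,c_2\leq 0$. A quick sanity check on the leading coefficient already gives $c_8=(-5/18)^2-(4/9)^2=-13/108<0$, which is consistent and suggests the remaining $c_k$'s are indeed non-positive. I expect the main (and only real) obstacle to be purely computational bookkeeping at this step, with no analytic subtleties beyond what is already encapsulated in Remark \ref{rem:Phragmen}. Once non-positivity of all coefficients is confirmed, the conclusion follows immediately.
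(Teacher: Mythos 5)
Your proposal follows essentially the same route as the paper's proof of the preceding proposition: reduce to the imaginary axis via the Phragm\'en--Lindel\"of/maximum-principle argument of Remark \ref{rem:Phragmen} and check the signs of the even-power coefficients of $\lvert N(\ii y)\rvert^2-\lvert D(\ii y)\rvert^2$; the paper computes $c_8=-\tfrac{13}{108}$, $c_6=-\tfrac{137}{324}$, $c_4=-\tfrac{1}{12}$ and $c_2=0$, confirming your expectation (note that $c_2=0$, so strictness of $\lvert R(\ii y)\rvert<1$ for $y\neq 0$ rests on $c_8<0$, which you verified). The corollary then follows exactly as you indicate from Theorem \ref{th_stable}, since the conservation of linear invariants and the steady-state preservation of MPRK43($\gamma$) are established in the surrounding text.
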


\section{Stability Investigation of modified Patankar Deferred Correction schemes}\label{se_dec}

Arbitrarily high-order conservative and positive modified Patankar Deferred
Correction schemes (MPDeC) were introduced in \cite{offner2020arbitrary} which is based on the Deferred Correction approach developed in \cite{dutt2000spectral}. 
A time step $[t^n, t^{n+1}]$ is divided into $M$ subintervals, where $t^{n,0}=t^n$
and $t^{n,M}=t^{n+1}$. The idea of the scheme is to mimic the Picard--Lindelöf theorem
as follows. At each subtimestep $t^{n,m}$ and each correction step $k$, an approximation $y^{m,(k)}$ is calculated.
During each of the $K$ correction steps the approximation of the numerical solution is increased by one
order of accuracy.
The modified Patankar trick is introduced inside the basic scheme to guarantee  positivity and conservation of the intermediate approximations.
Using the fact that initial states $y_i^{0,(k)}=y_i^n$
are identical for any correction $k$, the MPDeC correction steps
can be rewritten for $k=1,\dots,K$, $m =1,\dots, M$ and $i=1,\dotsc,N$ as
\begin{equation}
	\label{eq:explicit_dec_correction}
	y_i^{m,(k)}-y_i^n -\sum_{r=0}^M \theta_r^m \dt  \sum_{j=1}^N
	\left( \prod_{ij}(y^{r,(k-1)})
	\frac{y^{m,(k)}_{\gamma(j,i, \theta_r^m)}}{y_{\gamma(j,i, \theta_r^m)}^{m,(k-1)}}
	- \dest_{ij}(y^{r,(k-1)})  \frac{y^{m,(k)}_{\gamma(i,j, \theta_r^m)}}{y_{\gamma(i,j, \theta_r^m)}^{m,(k-1)}} \right)=0,
\end{equation}
where $\theta_r^m$ are the correction weights and the index function $\gamma(j,i,\theta_r^m)$ is defined by
\[\gamma(j,i,\theta_r^m)=\begin{cases}
	j, &\theta_r^m\geq 0,\\
	i,&\theta_r^m< 0.
\end{cases}\]
We also want to note that $\by^{s,(0)}=\by^n$ holds true for all $s=0,\dotsc,M$ by definition. The new numerical solution when applied to \eqref{eq:PDS_Sys} is $\by^{n+1}=\by^{M,(K)}$.

By means of an affine linear transformation of the interval $[t^n,t^{n+1}]$ to $[0,1]$, the $M$ transformed subintervals are determined by $0=t^0< \dots < t^M=1$. Then the correction weights can be computed according to the formula \[\theta_r^m=\int_{0}^{t^m}\varphi_r(t)dt,\] where $\varphi_r$ is the $r$-th Lagrangian basis polynomials defined by the subtimenodes $\lbrace t^m \rbrace_{m=0}^M$. 

Due to the iterative correction process, MPDeC schemes are more complicated than the previous schemes, especially since the index function changes the weights of productive and destructive terms. Since $\gamma$ depends on the sign of $\theta_r^m$, we introduce the nonnegative part $\theta_{m,+}=\min\{0,\theta_r^m\}$ and nonpositive part $\theta_{m,-}=\max\{0,\theta_r^m\}$. It is worth mentioning that 
\begin{equation}
	\begin{aligned}
		\theta_{r,\pm}^m=\frac{\theta_r^m\pm\lvert \theta_r^m\rvert}{2}
	\end{aligned}
\end{equation}
and
\[\theta_r^m=\begin{cases}\theta_{r,-}^m,& \theta_r^m<0,\\
	\theta_{r,+}^m,& \theta_r^m\geq 0\end{cases}\]
as well as $\theta_{r,-}^m+\theta_{r,+}^m=\theta_r^m$.
With that, we split the sum appearing in \eqref{eq:explicit_dec_correction} into two sums containing $\theta_{r,+}^m$ and $\theta_{r,-}^m$, respectively. For the separated sums, we know the value of $\gamma(j,i,\theta_r^m)$ so that we introduce the positive part 
\begin{equation}\label{eq:prk}
	\begin{aligned}
\bp^{r,(k)}(\by^{r,(k-1)},\by^{m,(k-1)},\by^{m,(k)})
=\bA\diag(\by^{m,(k)})\left(\diag(\by^{m,(k-1)})\right)^{-1} \by^{r,(k-1)}
	\end{aligned}
\end{equation}
 as well as  the negative part $\bn^{r,(k)}$ given by
 \begin{equation}\label{eq:nrkpij}
 n_i^{r,(k)}(\by^{r,(k-1)},\by^{m,(k-1)},\by^{m,(k)})=\sum_{j=1}^N \left( \prod_{ij}(\by^{r,(k-1)})
 \frac{y^{m,(k)}_{i}}{y_{i}^{m,(k-1)}}
 - \dest_{ij}(\by^{r,(k-1)})  \frac{y^{m,(k)}_{j}}{y_{j}^{m,(k-1)}} \right)
 \end{equation}
for $i=1,\dotsc, N$, $r=0,\dotsc, M$ and $k=1,\dotsc,K$. Using $\prod_{ij}(\by)=\dest_{ji}(\by)=a_{ij}y_j$ for $i\neq j$ and $p_{ii}(\by)=d_{ii}(\by)=0$ this can be rewritten as
 \begin{equation}\label{eq:nrk}
	n_i^{r,(k)}(\by^{r,(k-1)},\by^{m,(k-1)},\by^{m,(k)})=	\frac{y^{m,(k)}_{i}}{y_{i}^{m,(k-1)}}\sum_{\substack{j=1\\j\neq i}}^N a_{ij}  y_j^{r,(k-1)}
	- y_i^{r,(k-1)}  \sum_{\substack{j=1\\j\neq i}}^N a_{ji}\frac{y^{m,(k)}_{j}}{y_{j}^{m,(k-1)}}.
\end{equation}
Utilizing these vector fields, the iterates from \eqref{eq:explicit_dec_correction} satisfy
\begin{equation}\label{eq:Phis_MPDeC}
	\begin{aligned}
		\bzero=\bm \Phi_k^m(\by^n,\by^{1,(k-1)},\dotsc,\by^{M,(k-1)},\by^{m,(k)})=\by^{m,(k)}-\by^n&-\sum_{r=0}^M \theta_{r,+}^m \dt\bp^{r,(k)}(\by^{r,(k-1)},\by^{m,(k-1)},\by^{m,(k)})\\&-\sum_{r=0}^M \theta_{r,-}^m \dt\bn^{r,(k)}(\by^{r,(k-1)},\by^{m,(k-1)},\by^{m,(k)})
	\end{aligned}
\end{equation}
for $k=1,\dotsc,K$ and $m=1,\dotsc,M$. 
Furthermore, analogously to the Jacobians introduced in \eqref{eq:jacobians1} and \eqref{eq:jacobians2}, we write $\bD^*_x\bm \Phi_k^m$ to represent the Jacobian with respect to the entries of the vector $\by^x$ for some $x$, evaluated at $(\by^*,\dotsc, \by^{m,(k)}(\by^*))$. Finally, we introduce similar notations for the Jacobians of $\bp^{r,(k)}$ and $\bn^{r,(k)}$ with respect to $\by^x$. 

Also note that  MPDeC schemes are steady state preserving as plugging in $\by^{m,(k)}=\by^n=\by^*\in \ker(\bA)$ into \eqref{eq:explicit_dec_correction} yields a true statement. Hence, $\by^{m,(k)}(\by^*)=\by^*$ for all $k=1,\dotsc,K$ and $m=1,\dotsc, M$.

With that, we are able to compute $\bD\bg(\by^*)$ for MPDeC schemes as the next theorem states.
\begin{thm}\label{thm:mpdec}
	Let $\bg:\R^N_{>0}\to\R^N_{>0}$, implicitly given by the solution of \eqref{eq:Phis_MPDeC}, be the generating map of the MPDeC iterates when applied to \eqref{eq:PDS_Sys}. Furthermore, let $\by^*\in \ker(\bA)\cap \R^N_{>0}$ be a steady state of \eqref{eq:PDS_Sys}.
		
	Then, $\bg\in \mathcal C^2$ and the Jacobian of $\bg$ evaluated at $\by^*$ is given by
	\begin{equation}\label{eq:Dg_MPDeC}
		\begin{aligned}
			\bD\bg(\by^*)&=\bD^*_{n}\by^{M,(K)},\\
			\bD^*_{n}\by^{m,(k)}&=-(\bD^*_{m,(k)}\bm \Phi_k^m)^{-1}\left(\bD^*_{n}\bm\Phi_k^m+(1-\delta_{k1})\sum_{r=1}^M\bD^*_{r,(k-1)}\bm \Phi_k^m\bD^*_{n}\by^{r,(k-1)}\right)
		\end{aligned}
	\end{equation}
	for $m=1,\dotsc,M$ and $k=1,\dotsc, K$. Thereby, $\delta_{ij}$ is the Kronecker delta and 
	\begin{equation}\label{eq:D*y0Phikm}
		\begin{aligned}
			\bD^*_{n}\bm \Phi^m_k= \begin{cases}
				-\left(\bI+\dt(\bA +\diag(\by^*)\bA^T(\diag(\by^*))^{-1}) \sum_{r=0}^M \theta_{r,-}^m\right), &k= 1,\\
				-(\bI+\theta_0^m\dt\bA), &k> 1,
			\end{cases}
		\end{aligned}
	\end{equation}
	as well as 
	\begin{equation}\label{eq:D*l,(s)Phikm}
		\begin{aligned}
			\bD^*_{l,(s)}\bm \Phi^m_k=\begin{cases}
				-\theta_l^m\dt\bA, &s=k- 1>0, 0<l\neq m,\\
				\sum_{\substack{r=0}}^M\theta_{r,+}^m\dt\bA-\sum_{\substack{r=0}}^M\theta_{r,-}^m\dt(\diag(\by^*)\bA^T(\diag(\by^*))^{-1})-\theta_{m}^m\dt \bA, &s=k-1>0, l=m,\\
				\bI-\sum_{\substack{r=0}}^M\theta_{r,+}^m\dt\bA+\sum_{\substack{r=0}}^M\theta_{r,-}^m\dt\diag(\by^*)\bA^T(\diag(\by^*))^{-1}, & s=k, l=m.
			\end{cases}
		\end{aligned}
	\end{equation}
\end{thm}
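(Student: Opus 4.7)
The plan is to proceed in three stages. First, I would justify that $\bg\in\mathcal{C}^2$ and derive the chain-rule recursion \eqref{eq:Dg_MPDeC}. Second, I would compute the partial Jacobians of the vector fields $\bp^{r,(k)}$ and $\bn^{r,(k)}$ at $\by^*$. Third, I would assemble these into \eqref{eq:D*y0Phikm} and \eqref{eq:D*l,(s)Phikm}, being careful about the multiple occurrences of each variable in \eqref{eq:Phis_MPDeC}.

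For the first stage, $\bp^{r,(k)}$ and $\bn^{r,(k)}$ are rational on the positive cone, hence $\mathcal{C}^\infty$, and a glance at \eqref{eq:prk}, \eqref{eq:nrk}, and \eqref{eq:Phis_MPDeC} shows that $\bm\Phi_k^m=\bzero$ is a linear system in the unknown $\by^{m,(k)}$ whose matrix is invertible at positive states by the same reasoning as in \cite{huang2022stability}. An induction over $(k,m)$ then gives $\by^{m,(k)}\in\mathcal{C}^2$ as a function of $\by^n$, so in particular $\bg=\by^{M,(K)}\in\mathcal{C}^2$. Implicit differentiation of $\bm\Phi_k^m(\by^n,\by^{1,(k-1)}(\by^n),\dotsc,\by^{M,(k-1)}(\by^n),\by^{m,(k)}(\by^n))=\bzero$ and solving for $\bD_n\by^{m,(k)}$ yields \eqref{eq:Dg_MPDeC}. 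The factor $(1-\delta_{k1})$ reflects that $\by^{r,(0)}=\by^n$ is not an independent argument of $\bm\Phi_1^m$ but is absorbed into its $\by^n$-slot.

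For the second stage, using $\diag(\bv)\bw=\diag(\bw)\bv$, a direct computation from \eqref{eq:prk} gives $\bD_1\bp^{r,(k)}|_*=\bA$, $\bD_2\bp^{r,(k)}|_*=-\bA$, and $\bD_3\bp^{r,(k)}|_*=\bA$, where the subscripts $1,2,3$ label the three vector arguments of $\bp^{r,(k)}$. The analogous entrywise computation from \eqref{eq:nrk} exploits $\bA\by^*=\bzero$, which yields $\sum_{j\neq i}a_{ij}y_j^*=-a_{ii}y_i^*$ and collapses the off-diagonal sums, producing $\bD_1\bn^{r,(k)}|_*=\bA$, $\bD_2\bn^{r,(k)}|_*=\diag(\by^*)\bA^T\diag(\by^*)^{-1}$, and $\bD_3\bn^{r,(k)}|_*=-\diag(\by^*)\bA^T\diag(\by^*)^{-1}$.

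For the third stage, I would collect these building blocks with the weights $\theta_{r,\pm}^m$. The cases $l\neq m$, $s=k-1$ and $l=m$, $s=k$ are immediate, since the relevant variable appears in only a single slot and $\theta_{l,+}^m+\theta_{l,-}^m=\theta_l^m$. The delicate case is $l=m$, $s=k-1$: here $\by^{m,(k-1)}$ appears as the second argument of every $\bp^{r,(k)}$ and $\bn^{r,(k)}$ and, for the single index $r=m$, simultaneously as the first. Adding first- and second-slot contributions gives $\bA+(-\bA)=\bzero$ for $\bp^{m,(k)}$---consistent with the observation that $\bp^{m,(k)}$ restricted to equal first two arguments reduces to $\bA\by^{m,(k)}$---and $\bA+\diag(\by^*)\bA^T\diag(\by^*)^{-1}$ for $\bn^{m,(k)}$; splitting the $r=m$ summand off from the full sum produces the extra $-\theta_m^m\dt\bA$ visible in the second branch of \eqref{eq:D*l,(s)Phikm}. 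An entirely analogous accounting for $k=1$, where $\by^n$ occupies the first and second slots of every $\bp^{r,(1)}$ and $\bn^{r,(1)}$ simultaneously, yields the first branch of \eqref{eq:D*y0Phikm}: the $\bp$-contributions cancel and only $\bn$ survives, whereas for $k>1$ the $\by^n$-dependence enters solely through $\by^{0,(k-1)}=\by^n$ and gives the second branch. The main obstacle is exactly this bookkeeping---ensuring that every shared occurrence is counted once and the signs from $\theta_{r,\pm}^m$ are tracked correctly; once the building-block derivatives and the identity $\bA\by^*=\bzero$ are in place, the remaining algebra is routine.
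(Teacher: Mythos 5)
Your proposal is correct and follows essentially the same route as the paper's proof: establish $\bg\in\mathcal C^2$ via the linear-system structure, obtain \eqref{eq:Dg_MPDeC} by implicit differentiation, compute the slot-wise Jacobians of $\bp^{r,(k)}$ and $\bn^{r,(k)}$ at $\by^*$ using $\bA\by^*=\bzero$ and $\bm 1\in\ker(\bA^T)$, and assemble them with the weights $\theta_{r,\pm}^m$, treating the coincidences of arguments (the $r=m$ summand for $l=m$, and all slots collapsing to $\by^n$ for $k=1$) exactly as the paper does. Your organization—generic building-block derivatives first, then case-by-case assembly—is only a cosmetic repackaging of the paper's computation, and all your stated building blocks and the resulting formulas check out.
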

\begin{proof}
	Since the $\theta_r^m$ are fixed for a given scheme, the functions $\bm\Phi_k^m$ are in $\mathcal C^2$ and as a consequence of solving only linear systems, the map $\bg$ is also in $\mathcal C^2$. Furthermore, the formula \eqref{eq:Dg_MPDeC} follows analogously to \eqref{eq:FormularJacobian}, whereby we want to point out that the sum appearing in \eqref{eq:Dg_MPDeC} is multiplied with $0$ for $k=1$ since $\by^{r,(k-1)}=\by^n$ in this case. Hence, we only have to prove the formulas \eqref{eq:D*y0Phikm} and \eqref{eq:D*l,(s)Phikm}. For this, we compute the Jacobians of each addend of the sums in \eqref{eq:Phis_MPDeC} separately by considering \eqref{eq:prk} and \eqref{eq:nrk}.

Let us start proving \eqref{eq:D*y0Phikm}, first considering $k=1$.  From \eqref{eq:prk}  and $\by^{s,(0)}=\by^n$ for all $s=0,\dotsc,M$ it follows that \[\bp^{r,(1)}(\by^n,\by^n,\by^{m,(1)})=\bp^{r,(1)}(\by^n,\by^{m,(1)})=\bA\by^{m,(1)}\] 
	and hence, $\bD^*_{n}\bp^{r,(1)}=\bzero$. 
	Moreover, \eqref{eq:nrk} for $k=1$ yields 
	\[ n_i^{r,(1)}(\by^{n}, \by^{n},\by^{m,(1)})=	n_i^{r,(1)}(\by^{n},\by^{m,(1)})=	\frac{y^{m,(1)}_{i}}{y_{i}^{n}}\sum_{\substack{j=1\\j\neq i}}^N a_{ij}  y_j^{n}
	- y_i^{n}  \sum_{\substack{j=1\\j\neq i}}^N a_{ji}\frac{y^{m,(1)}_{j}}{y_{j}^{n}}.\]
Hence, using $\bm 1\in \ker(\bA^T)$, we obtain
	\begin{equation}\label{eq:2aii}
		\begin{aligned}
\frac{\partial}{\partial y_i^n} n_i^{r,(1)}(\by^*,\by^*)&= -\frac{1}{y_{i}^*}\sum_{\substack{j=1\\j\neq i}}^N a_{ij}  y_j^{*}
- \sum_{\substack{j=1\\j\neq i}}^N a_{ji} =\frac{1}{y_{i}^*}\left(-\sum_{\substack{j=1\\j\neq i}}^N a_{ij}  y_j^{*}
+a_{ii}y^{*}_{i}\right)=\frac{1}{y_{i}^*}\left(-\underbrace{\sum_{\substack{j=1}}^N a_{ij}  y_j^{*}}_{(\bA\by^*)_i=0}
+2a_{ii}y^{*}_{i}\right)=2a_{ii},
		\end{aligned}
	\end{equation}
and for $q\neq i$ we find
	\begin{equation*}
	\begin{aligned}
		\frac{\partial}{\partial y_q^n} n_i^{r,(1)}(\by^*,\by^*)&= a_{iq}
		+ a_{qi}\frac{y^{*}_{i}}{y_{q}^{*}}. 
			\end{aligned}
\end{equation*} 
Altogether, we obtain 
\begin{equation}\label{eq:A+diagAdiag}
	\bD^*_{n}\bn^{r,(1)}=\bA+\begin{pmatrix*}
	a_{11} & a_{21}\tfrac{y_1^*}{y_2^*} &\dots & a_{N1}\tfrac{y_1^*}{y_N^*} \\
	a_{12}\tfrac{y_2^*}{y_1^*} & \ddots\hphantom{\tfrac{y_2^*}{y_1^*}}& & \vdots \\
	\vdots & & \ddots & \vdots\\ 
	a_{1N}\tfrac{y_N^*}{y_1^*}& \dots &\dots & a_{NN}
	\end{pmatrix*}=\bA +\diag(\by^*)\bA^T(\diag(\by^*))^{-1},
\end{equation}
and thus,
\begin{equation*}
	\bD^*_{n}\bm \Phi^m_1=-\left(\bI+(\bA +\diag(\by^*)\bA^T(\diag(\by^*))^{-1}) \Delta t\sum_{r=0}^M \theta_{r,-}^m\right).
\end{equation*}
Next, for $k> 1$ it follows from \eqref{eq:prk} that 
\[\bD^*_{n}\bp^{r,(k)}=\delta_{r0} \bA. \]
Similarly,  $\bD^*_{n}\bn^{r,(k)}=\bzero$ if $r\neq 0$. Furthermore,
\[	n_i^{0,(k)}(\by^{n},\by^{m,(k-1)},\by^{m,(k)})=	\frac{y^{m,(k)}_{i}}{y_{i}^{m,(k-1)}}\sum_{\substack{j=1\\j\neq i}}^N a_{ij}  y_j^{n}
- y_i^{n}  \sum_{\substack{j=1\\j\neq i}}^N a_{ji}\frac{y^{m,(k)}_{j}}{y_{j}^{m,(k-1)}}\]
yields
\begin{equation*}
	\begin{aligned}
		\frac{\partial}{\partial y_i^n} n_i^{0,(k)}(\by^*,\by^*,\by^*)&=- \sum_{\substack{j=1\\j\neq i}}^N a_{ji}=a_{ii}\\
		\frac{\partial}{\partial y_q^n} n_i^{0,(k)}(\by^*,\by^*,\by^*)&=a_{iq}, \quad i\neq q,
	\end{aligned}
\end{equation*}
so that $\bD^*_{n}\bn^{r,(k)}=\delta_{r0}\bA$. This results in
\begin{equation*}
	\bD^*_{n}\bm \Phi^m_k=-\left(\bI+ (\theta_{0,-}^m+\theta_{0,+}^m)\Delta t\bA\right)=-\left(\bI+ \theta_{0}^m\Delta t\bA\right),
\end{equation*}
proving  \eqref{eq:D*y0Phikm}. 

To derive \eqref{eq:D*l,(s)Phikm} consider first the case $s=k-1>0$ and $0<l\neq m$. From \eqref{eq:prk} it follows immediately that
\begin{equation*}
\bD^*_{l,(k-1)}\bp^{r,(k)}=\delta_{rl}\bA.
\end{equation*}
Moreover, \eqref{eq:nrk} yields
\begin{equation*}
	\begin{aligned}
		\frac{\partial}{\partial y_i^{l,(k-1)}} n_i^{r,(k)}(\by^*,\by^*,\by^*)&=-\delta_{rl} \sum_{\substack{j=1\\j\neq i}}^N a_{ji}=\delta_{rl}a_{ii}\\
		\frac{\partial}{\partial y_q^{l,(k-1)}} n_i^{r,(k)}(\by^*,\by^*,\by^*)&=\delta_{rl}a_{iq}, \quad i\neq q,
	\end{aligned}
\end{equation*} 
which means that $\bD^*_{l,(k-1)}\bn^{r,(k)}=\delta_{rl}\bA$ for $l\neq m$. In total \eqref{eq:Phis_MPDeC} gives us
\begin{equation*}
\bD^*_{l,(k-1)}\bm \Phi_k^m=-\dt (\theta_{l,+}^m+\theta_{l,-}^m)\bA=-\dt\theta_l^m\bA.
\end{equation*}

Next, we investigate the case of $s=k-1>0$ and $l=m$. Using $\diag(\bv)\bw=\diag(\bw)\bv$  and \eqref{eq:prk}, we obtain
\begin{equation*}
	\begin{aligned}
		\bD^*_{m,(k-1)}\bp^{r,(k)}=\bD^*_{m,(k-1)}\left(\bA\diag(\by^{m,(k)})\diag\left(\by^{m,(k-1)}\right)^{-1} \by^{r,(k-1)}\right)=-(1-\delta_{rm})\bA.
	\end{aligned}
\end{equation*}
Furthermore, recalling \eqref{eq:nrk}, i.\,e.\
 \begin{equation*}
	n_i^{r,(k)}(\by^{r,(k-1)},\by^{m,(k-1)},\by^{m,(k)})=	\frac{y^{m,(k)}_{i}}{y_{i}^{m,(k-1)}}\sum_{\substack{j=1\\j\neq i}}^N a_{ij}  y_j^{r,(k-1)}
	- y_i^{r,(k-1)}  \sum_{\substack{j=1\\j\neq i}}^N a_{ji}\frac{y^{m,(k)}_{j}}{y_{j}^{m,(k-1)}}.
\end{equation*}
we also distinguish between $r=m$ and $r\neq m$. In the first case we see $n_i^{m,(k)}=n_i^{m,(k)}(\by^{m,(k-1)},\by^{m,(k)})$ and 
\begin{equation*}
	\begin{aligned}
		\frac{\partial}{\partial y_i^{m,(k-1)}} n_i^{m,(k)}(\by^*,\by^*)&=-\frac{1}{y_i^*} \sum_{\substack{j=1\\j\neq i}}^N a_{ij}y_j^*-\sum_{\substack{j=1\\j\neq i}}^N a_{ji}\overset{\eqref{eq:2aii}}{=}2a_{ii}\\
		\frac{\partial}{\partial y_q^{m,(k-1)}} n_i^{m,(k)}(\by^*,\by^*)&=a_{iq}+a_{qi}\frac{y_i^*}{y_q^*}\overset{\eqref{eq:A+diagAdiag}}{=}(\bA+\diag(\by^*)\bA^T(\diag(\by^*))^{-1})_{iq}, \quad i\neq q,
	\end{aligned}
\end{equation*}
which means that $	\bD^*_{m,(k-1)}\bn^{m,(k)}=\bA +\diag(\by^*)\bA^T(\diag(\by^*))^{-1}$. Turning to the case $r\neq m$, we find
\begin{equation*}
	\begin{aligned}
		\frac{\partial}{\partial y_i^{m,(k-1)}} n_i^{r,(k)}(\by^*,\by^*,\by^*)&=-\frac{1}{y_i^*} \sum_{\substack{j=1\\j\neq i}}^N a_{ij}y_j^*\overset{\eqref{eq:2aii}}{=}a_{ii},\\
		\frac{\partial}{\partial y_q^{m,(k-1)}} n_i^{r,(k)}(\by^*,\by^*,\by^*)&=a_{qi}\frac{y_i^*}{y_q^*}\overset{\eqref{eq:A+diagAdiag}}{=}(\diag(\by^*)\bA^T(\diag(\by^*))^{-1})_{iq}, \quad i\neq q,
	\end{aligned}
\end{equation*}
resulting in  $	\bD^*_{m,(k-1)}\bn^{r,(k)}=\diag(\by^*)\bA^T(\diag(\by^*))^{-1}$ for $r\neq m$. Altogether, we thus end up with
\begin{equation*}
	\bD^*_{m,(k-1)}\bm \Phi^m_k=\sum_{\substack{r=0}}^M\theta_{r,+}^m\dt\bA-\sum_{\substack{r=0}}^M\theta_{r,-}^m\dt(\diag(\by^*)\bA^T(\diag(\by^*))^{-1})-\theta_{m}^m\dt \bA.
\end{equation*}

Finally, we have to consider the case $s=k$ and $l=m$, i.\,e.\ we have to compute $\bD^*_{m,(k)}\bm \Phi^m_k$.  Using $\diag(\bv)\bw=\diag(\bw)\bv$  and \eqref{eq:prk} once again we see that 
\[\bD^*_{m,(k)}\bp^{r,(k)}=\bA. \]
Furthermore, we obtain
\begin{equation*}
	\begin{aligned}
		\frac{\partial}{\partial y_i^{m,(k)}} n_i^{r,(k)}(\by^*,\by^*,\by^*)&=\frac{1}{y_i^*} \sum_{\substack{j=1\\j\neq i}}^N a_{ij}y_j^*\overset{\eqref{eq:2aii}}{=}-a_{ii},\\
		\frac{\partial}{\partial y_q^{m,(k)}} n_i^{r,(k)}(\by^*,\by^*,\by^*)&=-a_{qi}\frac{y_i^*}{y_q^*}\overset{\eqref{eq:A+diagAdiag}}{=}-(\diag(\by^*)\bA^T(\diag(\by^*))^{-1})_{iq}, \quad i\neq q,
	\end{aligned}
\end{equation*}
resulting in
\begin{equation*}
	\bD^*_{m,(k)}\bm \Phi^m_k=\bI-\sum_{\substack{r=0}}^M\theta_{r,+}^m\dt\bA+\sum_{\substack{r=0}}^M\theta_{r,-}^m\dt\diag(\by^*)\bA^T(\diag(\by^*))^{-1}
\end{equation*}
With this, we have finally proven Theorem \ref{thm:mpdec}.

\end{proof}
The distribution and number  $M$ of the subtimesteps as well as the number of iterations $K$ determines the order of accuracy of the scheme. In the following, we will use equispaced and Gauss--Lobatto points. To reach order $p$, we use $K=p$ corrections and $M=\max\{K-1,1\}$ subintervals for equispaced point distributions. Focusing on Gauss-Lobatto, a higher-order quadrature rule is applied\footnote{The $L^2$ operator inside the DeC framework is based on a collocation method with Lobatto nodes (also known as the RK Lobatto III A method). }. Here, we  use M= ceil(K/2) subintervals and K=p corrections. 
We will denote the $p$-th order MPDeC method by MPDeC$(p)$.
Note that MPDeC(1) is equivalent to the modified Patankar--Euler scheme  and MPDeC(2) is equivalent to MPRK(2,2,1) investigated in \cite{IKM2D}.
Due to $\by^{n+1}=\by^{M,(K)}$, MPDeC conserves all linear invariants, if $\theta_r^M\geq 0$ for all $r=0,\dotsc,M$ since in this case the index function yields $\gamma(j,i,\theta_r^M)=j$ and \eqref{eq:explicit_dec_correction} can be written as 
	\[\by^{n+1}-\by^n-\sum_{r=0}^M\theta_r^M\dt \bA \diag(\by^{n+1})(\diag(\by^{M,(K-1)})^{-1}\by^{r,(K-1)}=\bzero,\]
	which means that $\bn^T\by^{n+1}=\bn^T\by^n$ for all $\bn\in \ker(\bA^T)$. Indeed, for equispaced nodes, $\theta_r^M$ with $r=0,\dotsc,M$ are the weights of the closed Newton--Cotes formulas for integrals over $I=[0,1]$. Hence, a negative $\theta_r^M$ occurs for the first time at $M=7$, i.\,e.with MPDeC$(8)$. In this case, we also have to consider $\bn^{r,(K)}(\by^{r,(K-1)},\by^{M,(K-1)},\by^{n+1})$ given in \eqref{eq:nrkpij}, resulting in
	\begin{equation*}
		\begin{aligned}
		\bn^T\bn^{r,(K)}(\by^{r,(K-1)},\by^{M,(K-1)},\by^{n+1})&=\sum_{i,j=1}^Nn_i\frac{y_i^{n+1}}{y_i^{M,(K-1)}}p_{ij}(\by^{r,(K-1)})- \sum_{i,j=1}^Nn_i\frac{y_j^{n+1}}{y_j^{M,(K-1)}}d_{ij}(\by^{r,(K-1)})\\
		&=\sum_{i,j=1}^Nn_i\frac{y_i^{n+1}}{y_i^{M,(K-1)}}p_{ij}(\by^{r,(K-1)})- \sum_{i,j=1}^Nn_j\frac{y_i^{n+1}}{y_i^{M,(K-1)}}p_{ij}(\by^{r,(K-1)}),
		\end{aligned}
	\end{equation*}
where we switched indices and used $d_{ij}=p_{ji}$ for the last equality. We observe that $\bn^T\bn^{r,(k)}$ does not need to vanish for $\bn\notin\Span(\bm 1)$, so that the preservation of all linear invariants can not be guaranteed anymore for arbitrary systems \eqref{eq:PDS_Sys} and MPDeC($p$) with equispaced nodes and $p\geq 8$.

	Moreover, in the case of Gauss--Lobatto nodes, the values $2\theta_r^M$ for $r=0,\dotsc,M$ equal the weights of the corresponding Gauss-Lobatto quadrature, which are always positive. As a result, MPDeC with Gauss--Lobatto nodes conserve all linear invariants when applied to \eqref{eq:PDS_Sys}.
\begin{rem}\label{rem:Mpdec}
From Theorem \ref{thm:mpdec}, we see that the Jacobian in general depends on $\by^*$, if there exist negative correction weights $\theta_r^m$. For equispaced or Gauss--Lobatto points this is already the case for $K>2$. Hence, to study the stability of MPDeC schemes applied to general linear systems, one needs to locate the eigenvalues of the Jacobian, which possibly depend on $\by^*$ themselves. Such an analysis is outside the scope of this work, which is why we will focus on the following class of problems.

If $\bA$ is normal, then  $\bA$ and $\bA^T$ share the same eigenvectors and the corresponding eigenvalues are the complex conjugate of each other. Since $\bm 1\in \ker(\bA^T)$ this means that even $\bm1 \in \ker(\bA)$. Hence, we may discuss the stability of $\by^*=\bm 1$. Then, we find
\[\sigma\left(r_1(\bA)+r_2\left(\diag(\by^*)\bA^T(\diag(\by^*))^{-1}\right)\right)=\{r_1(\lambda)+r_2(\bar\lambda)\mid \lambda\in \sigma(\bA)\} \]
for any rational maps $r_1,r_2$, which means that the spectrum of the Jacobian of the map $\bg$ generating  the MPDeC iterates can be written only in terms of the eigenvalues of $\bA$. Using \eqref{eq:Dg_MPDeC}, \eqref{eq:D*y0Phikm} and \eqref{eq:D*l,(s)Phikm}, the stability function $R_p$ of MPDeC($p$) for normal matrices $\bA$ and $\by^*=\bm 1$ can be computed by
\begin{equation}\label{eq:Rmpdec}
	\begin{aligned}
		R^{m,(1)}(z)&=\frac{1+(z+\bar{z})\sum_{r=0}^M\theta_{r,-}^m}{1-\left(z \sum_{r=0}^M \theta_{r,+}^m-\bar{z}\sum_{r=0}^M \theta_{r,-}^m\right)},\\
		R^{m,(\hat k)}(z)&=\frac{1+\theta_0^mz+z\displaystyle\sum_{\substack{r=1\\r\neq m}}^M\theta_r^mR^{r,(\hat k-1)}(z)-\left( z \sum_{r=0}^M \theta_{r,+}^m-\bar{z} \sum_{r=0}^M \theta_{r,-}^m-z\theta_m^m\right)R^{m,(\hat k-1)}(z)}{1-\left(z \sum_{r=0}^M \theta_{r,+}^m-\bar{z}\sum_{r=0}^M \theta_{r,-}^m\right)},\\
		R_p(z)&=R^{M,(K)}(z),
	\end{aligned}
\end{equation}
for $\hat k=2,\dotsc,K$ and $m=1,\dotsc,M$. Note that if $\bA$ is symmetric it is also normal and we obtain $\sigma(\bA)\tm \R$, so that one can further simplify \eqref{eq:Rmpdec} to
\begin{equation}\label{eq:Rmpdecreduced}
	\begin{aligned}
		R^{m,(1)}(z)&=\frac{1+2z\sum_{r=0}^M\theta_{r,-}^m}{1-z \sum_{r=0}^M \abs{\theta_{r}^m}},\\
		R^{m,(\hat k)}(z)&=\frac{1+\theta_0^mz+z\displaystyle\sum_{\substack{r=1\\r\neq m}}^M\theta_r^mR^{r,(\hat k-1)}(z)-z\left(\sum_{r=0}^M \abs{\theta_{r}^m}-\theta_m^m\right)R^{m,(\hat k-1)}(z)}{1-z \sum_{r=0}^M \abs{\theta_{r}^m}},\\
		R_p(z)&=R^{M,(K)}(z),
	\end{aligned}
\end{equation}
using $\theta^m_{r,+}+\theta^m_{r,-}=\theta_r^m$. 
It is also worth mentioning that for the system matrix  
\begin{equation}\label{eq:A2x2}
\bA=\begin{pmatrix*}[r]
	-a & b\\a&-b
\end{pmatrix*}
\end{equation} with $a,b>0$, used in \cite{IKM2D,torlo2021issues}, we find that $\ker(\bA)=\Span(\by^*)$ with $\tfrac{y_1^*}{y_2^*}=\tfrac{b}{a}$, and thus
\[\diag(\by^*)\bA^T(\diag(\by^*))^{-1}=\begin{pmatrix*}[r]
	-a & a\frac{b}{a}\\
	 b\frac{a}{b}&-b
\end{pmatrix*}=\bA,\]
so that the stability function $R_p$ in this case is also given by \eqref{eq:Rmpdecreduced}. 
\end{rem}
Deferred Correction schemes are described by an iterative process which can be compared with classical RK schemes with more stages \cite{abgrall2022relaxation}. As MPDeC and DeC share the same amount of stages, we thus know that MPDeC(3) contains 5 stages using equispaced point distributions. Furthermore, MPDeC(4) has already 10 stages inside for equispaced points distributions, resulting in rational function with polynomial degree 10 in the numerator and denominator.
Using Gauss-Lobatto nodes decreases the number of stages. For an MPDeC(4) we would end up with seven stages. 
By going even to arbitrary high-order the stability analysis would even become more complicated. Therefore, 
we avoid  a theoretical analysis of the stability functions and investigate them numerically. 

To give a first insight in the stability properties of MPDeC, we analyze the reduced stability function \eqref{eq:Rmpdecreduced}. In both cases described in Remark \ref{rem:Mpdec}, the eigenvalues of $\bA$ leading to \eqref{eq:Rmpdecreduced} are real, so that it is no loss of generality to restrict ourselves to the $2\times 2$ case with $\bA$ given in \eqref{eq:A2x2}. 

In Figure \ref{fig:stability}, we present the absolute value of the stability function over real $z$. 
To obtain a stable scheme, the absolute value of $R(z)$ has to be always smaller than one (the black line). In \ref{fig:GL}, we investigate MPDeC from order 4 to 14 using Gauss--Lobatto points. As can be recognized all MPDeC methods are stable using Gauss--Lobatto points. In \ref{fig:ESP}, the stability functions of MPDeC schemes from 4th to 14th order are depicted for equispaced nodes. Here, we recognize that MPDeC(12) and MPDeC(14) are unstable but MPDeC(13) is stable.  However, this is not surprising since already in classical DeC using equidistant points has been problematic for high-order methods, cf.  \cite{dutt2000spectral,han2021dec,offner2020arbitrary, torlo2021issues} and references therein.   The reason for this is related with classical interpolation theory where it is known that equidistant points should be avoided. However, we would like to point out that our investigation supports as well the numerical investigation in \cite{torlo2021issues} where problems in MPDeC-equidistant have been also recognized. 

\begin{figure}[tb]
	\centering 
	\begin{subfigure}[b]{0.49\textwidth}
		\includegraphics[width=\textwidth]{%
			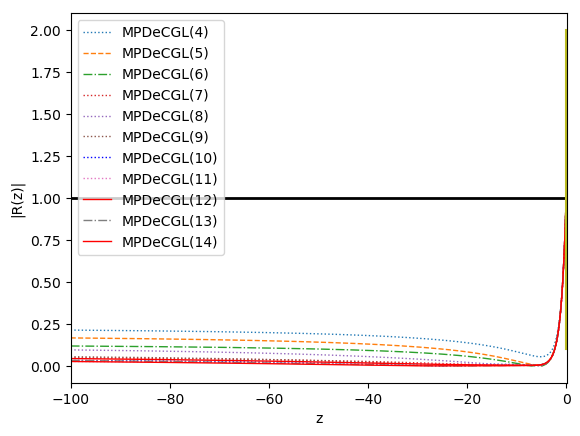} 
		\caption{Gauss--Lobtto points}
		\label{fig:GL_22}
	\end{subfigure}%
	~
	\begin{subfigure}[b]{0.49\textwidth}
		\includegraphics[width=\textwidth]{%
			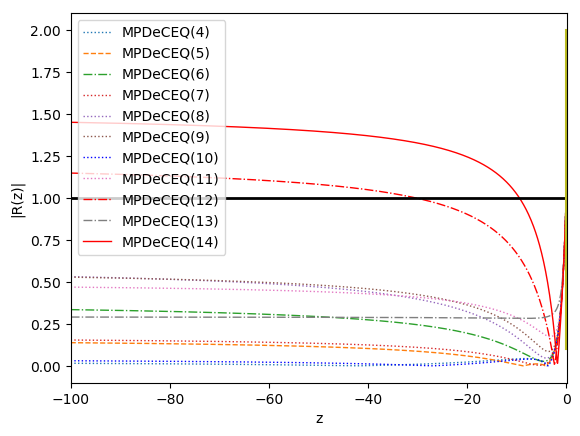} 
		\caption{Equispaced points in MPDeC}
		\label{fig:ESP_22}
	\end{subfigure}%
	\caption{Absolute value of the stability function over z}
	\label{fig:stability}
\end{figure}

\section{Numerical Experiments}\label{se_numerics}

In the following part, we verify the theoretical results of our considered modified Patankar schemes by numerical simulations. 
For the MPRK schemes we will consider more general systems where for the MPDeC schemes 
we restrict ourselves as described above to a $2\times 2$ system.

\subsection{Numerical Simulations of MPRK Schemes}

Due to our investigations in Section \ref{se_MPRK}, all the considered MPRK schemes are unconditionally stable theoretically
and our numerical experience will demonstrate this as well. 
For comparison, we will focus on the same tests as suggested and investigated in \cite{IKM2D,IKMSys,huang2022stability}. 
We consider the following three different settings:
\begin{enumerate}
	\item Test problem with exclusively real eigenvalues
	\item Test problem with complex eigenvalues 
	\item Test problem with double zero eigenvalues
\end{enumerate}

\subsection*{Test problem with exclusively real eigenvalues}
As the first test, we consider the linear initial  value problem (IVP)
\begin{equation}
	\by'=100\begin{pmatrix*}[r]
		-2 & 1 & 1\\
		1& -2& 1 \\
		1 & 3 &-2
	\end{pmatrix*} \by \text{ with } 
	\by(0)= \begin{pmatrix*}[r]
		1\\
		9\\
		5
	\end{pmatrix*}.
	\label{eq_numeris_real}
\end{equation}
The IVP \eqref{eq_numeris_real} has only positive off-diagonal elements inside the matrix and is therefore a Metzler matrix. Together with the positive initial values, this ensures that each component of the solution of the IVP is positive for all times. 
The eigenvalues are given by $\lambda_1=0, \lambda_2=-300$ and $\lambda_3=-500$ and the 
analytical solution is given by 
$$
\by(t)=c_1\begin{pmatrix*}[r]
	5\\
	3\\
	7
\end{pmatrix*}+c_2
\begin{pmatrix*}[r]
	-1\\
	0\\
	1
\end{pmatrix*}
\mathrm{e}^{-300t}
+c_3
\begin{pmatrix*}[r]
	0\\
	-1\\
	1
\end{pmatrix*}
\mathrm{e}^{-500t}
$$
with $(c_1,c_2,c_3)=(1,4,6)$. The steady state is given by $\by^*=(5,3,7)^T.$
The zero eigenvalue is simple and hence we obtain exactly one linear invariant given by $\mathbf{1}^T \by$. Therefore, the positive system is also conservative, e.g. $\sum_{i=1}^3 y_i^n=15$ for all $n\in \N_0$. To demonstrate the stability of the MPRK methods, we select three different types focusing on MPRK32, MPRK43(0.5) and MPRK43(0.9,0.6). 
In Figure \ref{fig:stability_real}, the analytical and numerical results are shown. 
In \ref{fig:short}, we plot it using to a comparably small time step of $\dt=0.05$, whereas
in \ref{fig:long} the time step is $\dt=25$. In both cases our approximated solution converges towards the fixed point even for a big time step. 
Note that we still have oscillations except for the MPRK32 scheme, we refer again to \cite{torlo2021issues} for a more detailed investigation on this topic. 

\begin{figure}[tb]
	\centering 
	\begin{subfigure}[b]{0.49\textwidth}
		\includegraphics[width=\textwidth]{%
			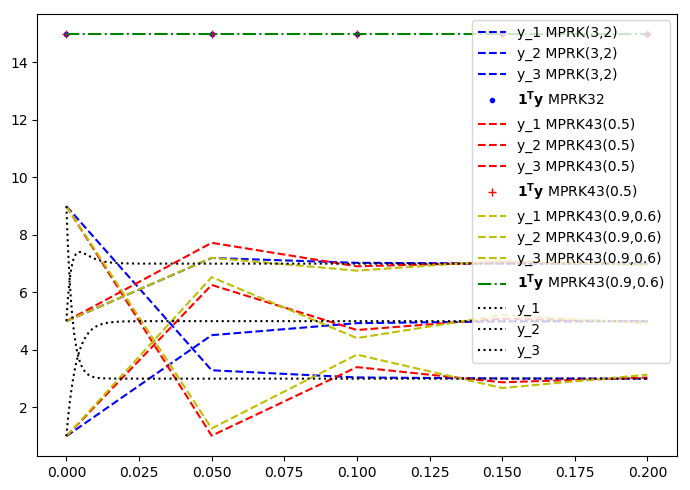} 
		\caption{$\dt=0.05$}
		\label{fig:short}
	\end{subfigure}%
	~
	\begin{subfigure}[b]{0.49\textwidth}
		\includegraphics[width=\textwidth]{%
			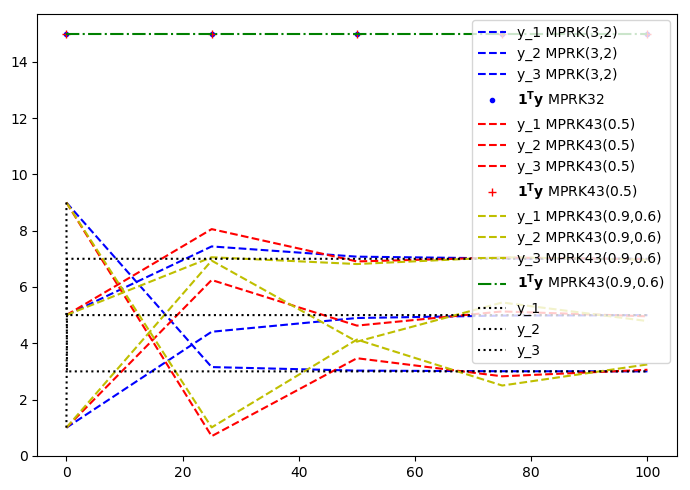} 
		\caption{$\dt=25$}
		\label{fig:long}
	\end{subfigure}%
	\caption{Approximation of \eqref{eq_numeris_real}}
	\label{fig:stability_real}
\end{figure}

\subsection*{Test problem with complex eigenvalues}

The second test is the IVP
\begin{equation}
	\by'=100\begin{pmatrix*}[r]
		-4 & 3 & 1\\
		2& -4& 3 \\
		2 & 1 &-4
	\end{pmatrix*} \by \text{ with } 
	\by(0)= \begin{pmatrix*}[r]
		9\\
		20\\
		8
	\end{pmatrix*}.
	\label{eq_numeris_complex}
\end{equation}
The system matrix in  \eqref{eq_numeris_complex}  is a  Metzler matrix
and we have the eigenvalues  $\lambda_1=0, \lambda_2=100(-6+\mathrm{i})$ and $\lambda_3=\overline{\lambda}$. The 
analytical solution is given by 
\begin{equation}
	\begin{aligned}
		\by(t)=\begin{pmatrix*}[r]
			13\\
			14\\
			10
		\end{pmatrix*}-&2\mathrm{e}^{-600t} \left( \cos(100t)
		\begin{pmatrix*}[r]
			-1\\
			0\\
			1
		\end{pmatrix*}
		-\sin (100t) 
		\begin{pmatrix*}[r]
			1\\
			-1\\
			9
		\end{pmatrix*}
		\right)\\
		-&6\mathrm{e}^{-600t} \left( \cos(100t)
		\begin{pmatrix*}[r]
			1\\
			-1\\
			0
		\end{pmatrix*}
		+\sin (100t) 
		\begin{pmatrix*}[r]
			-1\\
			0\\
			1
		\end{pmatrix*}
		\right)
	\end{aligned}\label{eq_solution}
\end{equation}
and the steady state is $\by^*= (13, 14,10)^T$. Also in this case a rapid decrease to $\by^*$ can be expected through the behaviour of \eqref{eq_solution}. 
In Figure \ref{fig:stability_complex}, we see again the numerical solution compared to the analytical one. All schemes are stable and converge towards the steady state solution. The conservation property is ensured  as well and can be recognized in \ref{fig:long_2}.

\begin{figure}[tb]
	\centering 
	\begin{subfigure}[b]{0.49\textwidth}
		\includegraphics[width=\textwidth]{%
			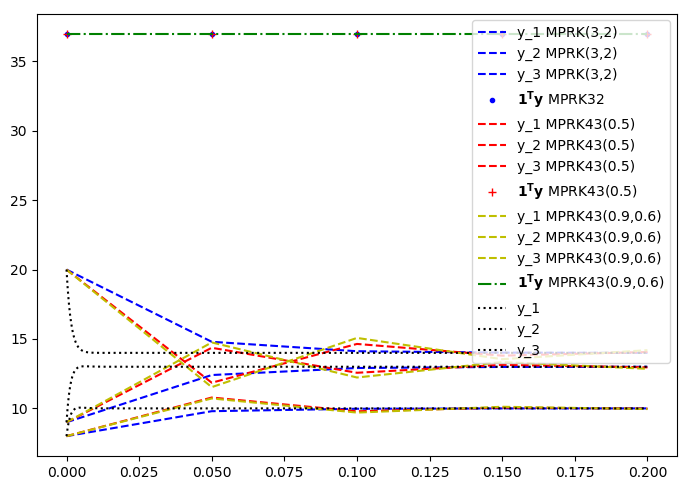} 
		\caption{$\dt=0.025$}
		\label{fig:short_1}
	\end{subfigure}%
	~
	\begin{subfigure}[b]{0.49\textwidth}
		\includegraphics[width=\textwidth]{%
			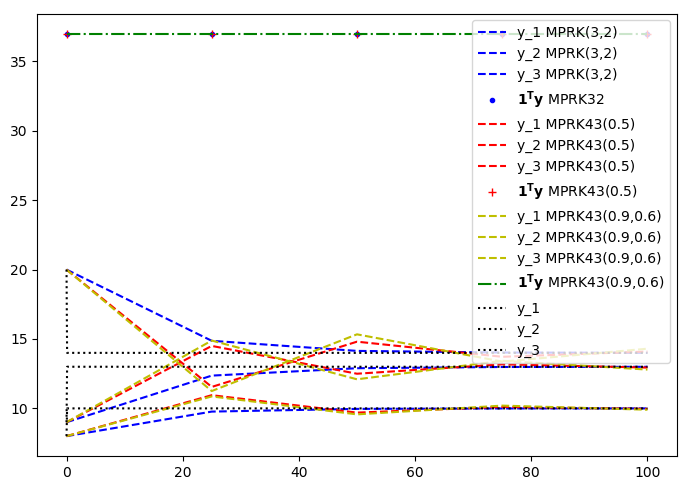} 
		\caption{$\dt=25$}
		\label{fig:long_2}
	\end{subfigure}%
	\caption{Approximation of \eqref{eq_numeris_complex}}
	\label{fig:stability_complex}
\end{figure}

\subsection*{Test problem with double zero eigenvalues}

The third test is the linear IVP
\begin{equation}
	\by'=100\begin{pmatrix*}[r]
		-2& 0 & 0& 1\\
		0& -4& 3 & 0 \\
		0 & 4 &-4& 0 \\
		2 & 0 & 0 &-1 
	\end{pmatrix*} \by \text{ with } 
	\by(0)= \begin{pmatrix*}[r]
		4\\
		1\\
		9 \\
		1
	\end{pmatrix*}.
	\label{eq_numeris_complexdouble}
\end{equation}
The system matrix in  \eqref{eq_numeris_complexdouble}  is a  Metzler matrix
with double zero eigenvalues  $\lambda_1= \lambda_2=0$.
Therefore, we obtain besides $\mathbf{1}^T \by$ (the conservative property), a second linear invariant $\mathbf{n}^T \by$ with $\mathbf{n}^T=(1,2,2,1)^T$.
Using the remaining eigenvalues $\lambda_3=-300$ and
$\lambda_4=-700$ and corresponding eigenvectors, the analytical solution is given by
\begin{equation*}
	\by(t)=c_1\begin{pmatrix*}[r]
		0\\
		1\\
		4/3\\
		0
	\end{pmatrix*}+c_2
	\begin{pmatrix*}[r]
		1\\
		0\\
		0\\
		2
	\end{pmatrix*}
	+
	c_3
	\mathrm{e}^{-700t}
	\begin{pmatrix*}[r]
		0 \\
		1\\
		-1\\
		0
	\end{pmatrix*}
	+c_4\mathrm{e}^{-300t}
	\begin{pmatrix*}[r]
		1\\
		0\\
		0\\1
	\end{pmatrix*}
\end{equation*}
with $(c_1,c_2,c_3,c_4)=(30/7,5/3, -23/7, 7/3)$.
The steady state will be reached quite fast and it is
given by $\by^*=\frac{1}{21}(7,90,120,70)^T$. In Figures \ref{fig:stability_2_short} and 
\ref{fig:stability_2_long}, we see the numerical results for our selected schemes for different $\dt$. All schemes converge to the steady state solutions and preserve both linear invariants as presented in the figures. 

\begin{figure}[tb]
	\centering 
	\begin{subfigure}[b]{0.33\textwidth}
		\includegraphics[width=\textwidth]{%
			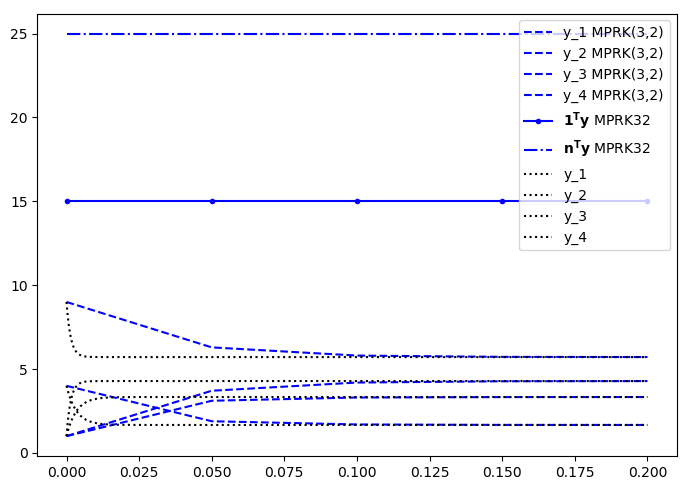} 
		\caption{MPRK32}
		\label{short_3_b_1}
	\end{subfigure}%
	~
	\begin{subfigure}[b]{0.33\textwidth}
		\includegraphics[width=\textwidth]{%
			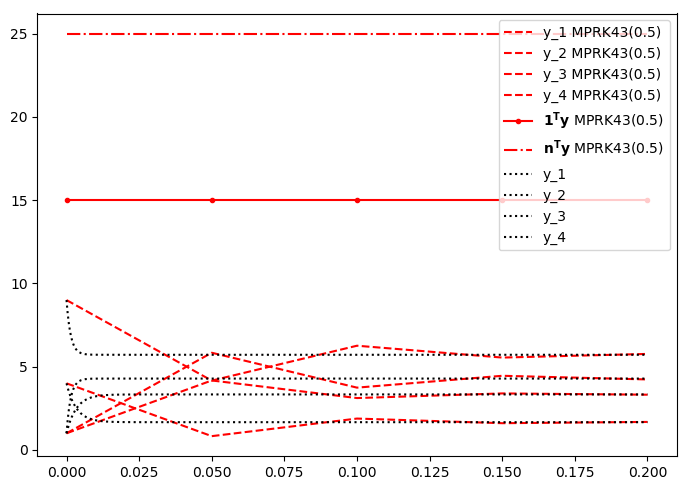} 
		\caption{MPRK43(0.5)}
		\label{short_3_bbb}
	\end{subfigure}%
	~
	\begin{subfigure}[b]{0.33\textwidth}
		\includegraphics[width=\textwidth]{%
			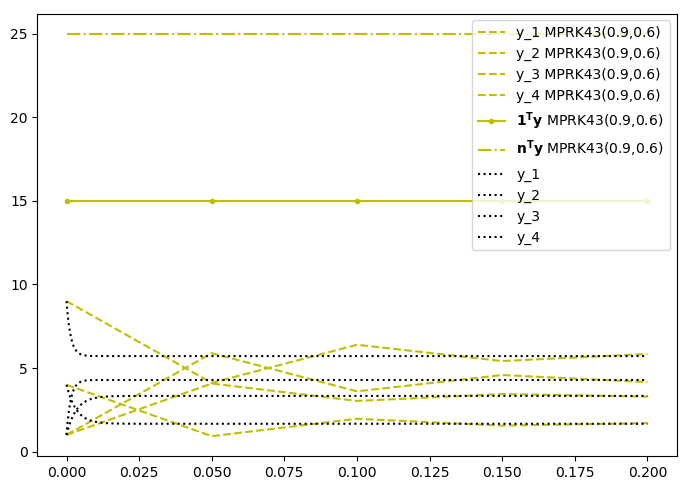} 
		\caption{MPRK43(0.9,0.6)}
		\label{short_3_b_3}
	\end{subfigure}%
	\caption{Approximation of \eqref{eq_numeris_complexdouble} with $\dt=0.05$}
	\label{fig:stability_2_short}
\end{figure}

\begin{figure}[tb]
	\centering 
	\begin{subfigure}[b]{0.33\textwidth}
		\includegraphics[width=\textwidth]{%
			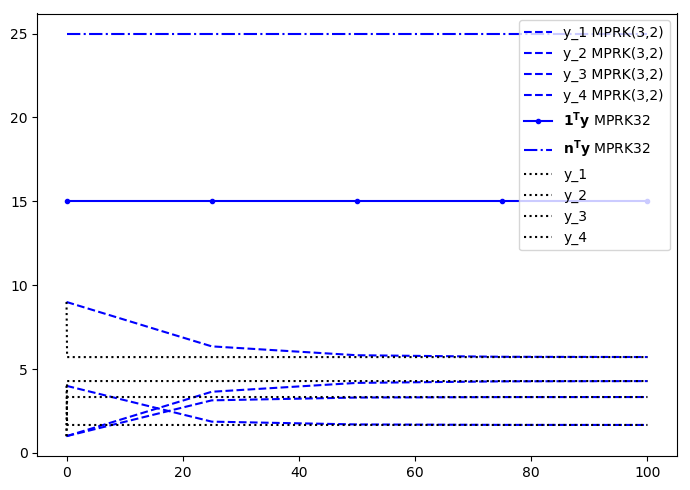} 
		\caption{MPRK32}
		\label{short_3_b_4}
	\end{subfigure}%
	~
	\begin{subfigure}[b]{0.33\textwidth}
		\includegraphics[width=\textwidth]{%
			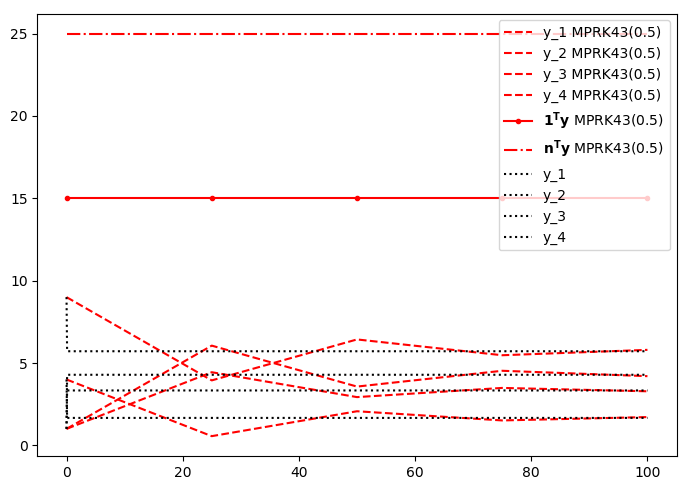} 
		\caption{MPRK43(0.5)}
		\label{short_3_b_5}
	\end{subfigure}%
	~
	\begin{subfigure}[b]{0.33\textwidth}
		\includegraphics[width=\textwidth]{%
			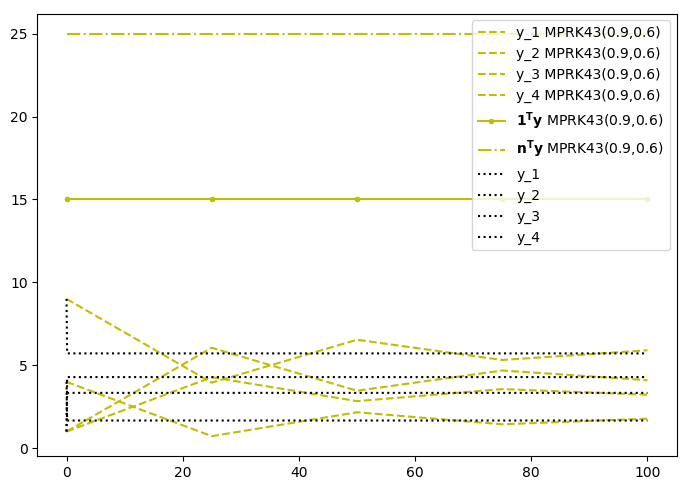} 
		\caption{MPRK43(0.9,0.6)}
		\label{short_3_b_6}
	\end{subfigure}%
	\caption{Approximation of \eqref{eq_numeris_complexdouble} with $\dt=25$}
	\label{fig:stability_2_long}
\end{figure}

\subsection{Numerical Simulations of MPDeC Schemes}
We consider the initial value problem 
\begin{equation}
	\by'=\begin{pmatrix*}[r]
		-25 & 25\\
		25  & -25 
	\end{pmatrix*} \by \text{ with } 
	\by(0)= \begin{pmatrix*}[r]
		0.998\\
		0.002
	\end{pmatrix*}.
	\label{eq_numeris_2_2}
\end{equation}
The nonzero eigenvalue $\lambda=-50$ and the analytical solutions is given by 
$$
\by(t)=\frac{1}{2}\begin{pmatrix*}[r]
	1\\
	1
\end{pmatrix*}+0.498 
\begin{pmatrix*}[r]
	1\\
	-1
\end{pmatrix*}
\mathrm{e}^{-50t}.
$$
The steady state is given by $\by^*=(0.5,0.5)^T.$
In Figure \ref{fig:stability_dec}, the numerical and analytical solutions are plotted for several different MPDeC schemes. 

\begin{figure}[tb]
	\centering 
	\begin{subfigure}[b]{0.49\textwidth}
		\includegraphics[width=\textwidth]{%
			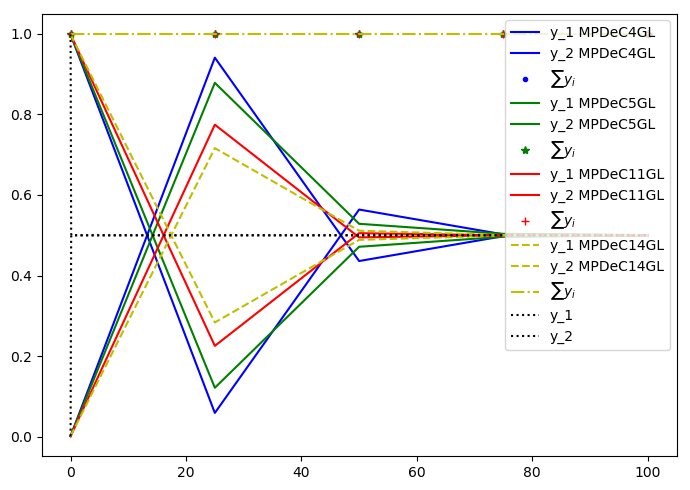} 
		\caption{Gauss--Lobtto points}
		\label{fig:GL_2}
	\end{subfigure}%
	~
	\begin{subfigure}[b]{0.49\textwidth}
		\includegraphics[width=\textwidth]{%
			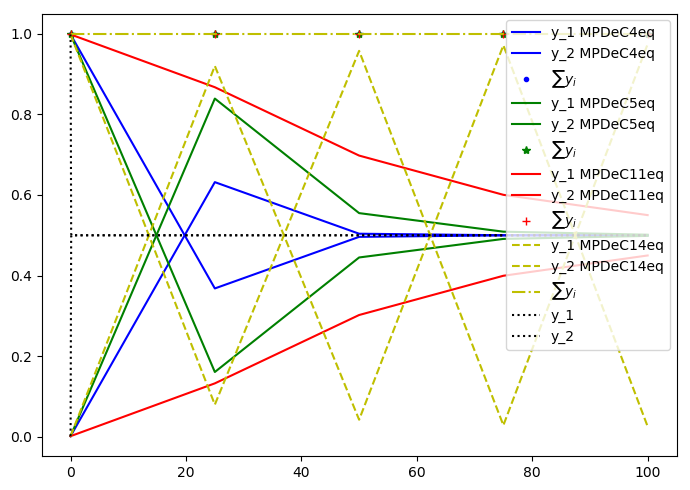} 
		\caption{Equispaced points in MPDeC}
		\label{fig:ESP_2}
	\end{subfigure}%
	\caption{Approximation with $\dt=25$ }
	\label{fig:stability_dec}
\end{figure}

From our investigation in Section \ref{se_dec}, 
we know that using Gauss--Lobatto points leads always to stable and conservative approximations as can be also recognized in \ref{fig:GL_2}, however, for equidistant points, we obtain problems inside the approximation for higher-order approximations. This can also be seen in our numerical approximation for the IVP \eqref{eq_numeris_2_2} in Figure \ref{fig:ESP_2}. As we mentioned earlier this behaviour is not surprising since already the classical DeC method (without using the modified Patankar trick) has problems for high-order approximations, i.\,e.\  for $8$-th order or higher, if equidistant points are used. Obviously, MPDeC(14) is unstable and alternates around the steady state solution. Nevertheless, we can obtain a stable discretization for this test case if we decrease the time step. We can estimate the time step using the eigenvalue $\lambda=50$ and the value $z^* \approx -9.403$ satisfying $\lvert R(z^*)\rvert=1$ in Figure \ref{fig:ESP}. From $z=\dt\lambda$ we thus find the bound $\dt \leq 0.188$. 
In Figure \ref{fig:stability_dec_2}, we see an unstable behaviour for $\dt =0.2$ whereas for $\dt\approx 0.17$ also MPDeC(14) converges to the steady state solution. Indeed, increasing $\dt$ up to $0.188$ to obtain stable approximations, however, the convergence rate will also be quite slow and we need more time to reach the steady state. 
\begin{figure}[tb]
	\centering 
	\begin{subfigure}[b]{0.49\textwidth}
		\includegraphics[width=\textwidth]{%
			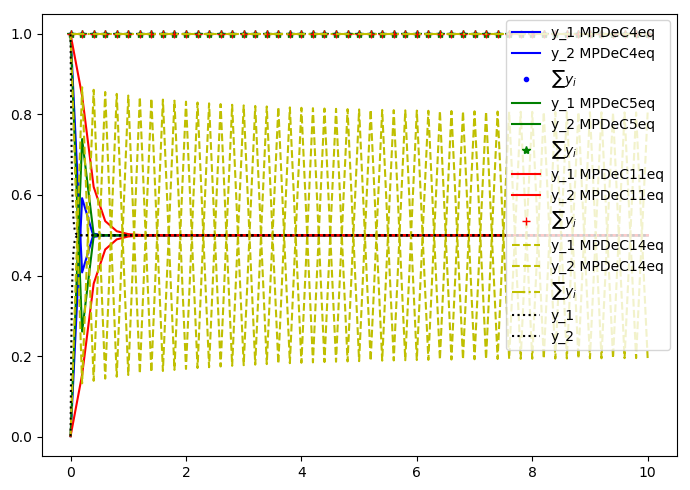} 
		\caption{$\dt=0.2$}
		\label{fig:ESP_Short}
	\end{subfigure}%
	~
	\begin{subfigure}[b]{0.49\textwidth}
		\includegraphics[width=\textwidth]{%
			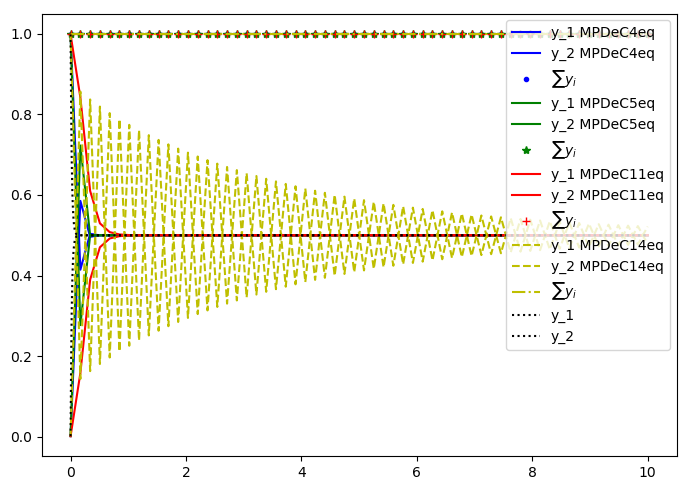} 
		\caption{$\dt\approx0.17$}
		\label{fig:ESP_short}
	\end{subfigure}%
	\caption{Numerical approximation of \eqref{eq_numeris_2_2} }
	\label{fig:stability_dec_2}
\end{figure}  
Note that the presented theory only claims a local convergence of the iterates towards the steady state. 
This can be seen, for instance, in the last example from \cite[Figure B.9]{torlo2021issues}, where it is shown that the stable MPDeC(8) scheme does not convergence to the correct steady-state. 
Nevertheless, if we violate the stability condition, we can start arbitrary close to the steady state solution, and still, the iterates will not converge to $\by^*$. 
To clarify this, we change our IVP \eqref{eq_numeris_2_2} to the following condition
\begin{equation}
	\by'=\begin{pmatrix*}[r]
		-0.5 & 0.5\\
		0.5  & -0.5
	\end{pmatrix*} \by \text{ with } 
	\by(0)=\by^*+ 10^{-6}\begin{pmatrix*}[r]
		1\\
		-1
	\end{pmatrix*}.
	\label{eq_numeris_2_3}
\end{equation}
In Figure \ref{fig:stability_dec_3}, we plot the numerical solution  $y_2$ calculated with order MPDeC(14) using equispaced points. The time step is selected with respect to the eigenvalue $\lambda_1=-1$, so that $\dt= \frac{z}{\lambda}=\lvert z\rvert$ holds. In \ref{convergence_2}, we see a slow convergence against the steady state. In Figures \ref{convergence_3}-\ref{convergence_4}, the conditions for convergence are not fulfilled and we see this also in our figures. A similar behaviour can be seen for $y_1$ analogously.  

\begin{figure}[tb]
	\centering 
	\begin{subfigure}[b]{0.24\textwidth}
		\includegraphics[width=\textwidth]{%
			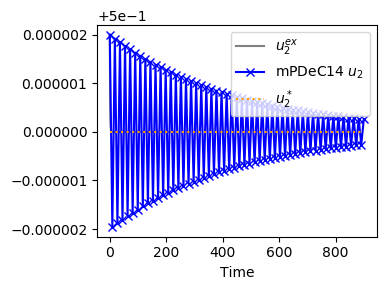} 
		\caption{$z=-9$}
		\label{convergence_1}
	\end{subfigure}%
	~
	\begin{subfigure}[b]{0.24\textwidth}
		\includegraphics[width=\textwidth]{%
			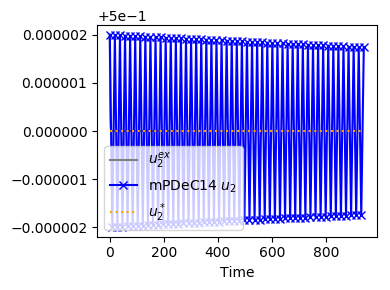} 
		\caption{$z=z_{c}$, $\lvert R(z_{c})\rvert<1$}
		\label{convergence_2}
	\end{subfigure}%
	~
	\begin{subfigure}[b]{0.24\textwidth}
		\includegraphics[width=\textwidth]{%
			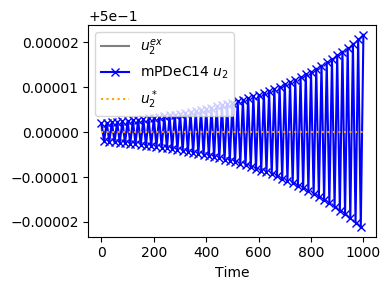} 
		\caption{$z=-10$}
		\label{convergence_3}
	\end{subfigure}%
	~
	\begin{subfigure}[b]{0.24\textwidth}
		\includegraphics[width=\textwidth]{%
			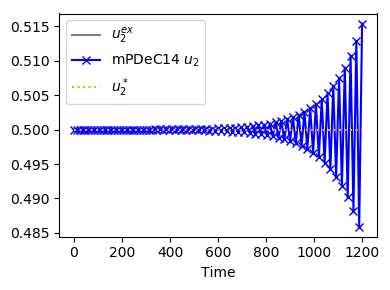} 
		\caption{$z=-12$}
		\label{convergence_4}
	\end{subfigure}%
	\caption{Numerical approximation of $y_2$ of \eqref{eq_numeris_2_3} }
	\label{fig:stability_dec_3}
\end{figure}  
\section{Conclusion}\label{se_conclusion}

In this paper, we have investigated the stability conditions for several modified Patankar schemes. We were able to demonstrate that the MPRK(3,2) scheme proposed in \cite{torlo2021issues} and the MPRK43($\gamma$) schemes from \cite{kopecz2018unconditionally} are stable considering  general positive linear $N\times N$ systems. Further, we  provided the stability functions for the MPRK43($\alpha, \beta)$ schemes and investigated it for $N\leq 4$. 
For a fixed method, the conditions \eqref{eq:assumption.prop.MPRK43(alpha,beta)} can easily be checked whereas through a numerical study we have concluded that these conditions should be as well fulfilled for the general  parameter selection. However, a formal proof is still missing. This remaining investigation will also be part of future research. 
We were  also able to compute the stability function for MPDeC schemes and investigated the schemes numerically. We have seen that using Gauss--Lobatto points, we always obtain a stable scheme whereas for equispaced points stability issues have been recognized for higher-order methods and large $\dt$. This fact is not surprising since already DeC methods show stability issues  if equispaced points are used and also the investigation  in \cite{torlo2021issues} confirms those problems. We have seen that the $\dt$ bound can be estimated from stability functions and eigenvalues.

In the future, we plan to continue our investigation in several directions. First, we aim to investigate MPRK43($\alpha,\beta$) and MPDeC in more general setups. Furthermore, we will consider non-conservative Patankar systems as the ones proposed in \cite{chertock2015steady}. Later, we combine MP schemes with proper selected space discretization and consider hyperbolic conservation/balance laws as it was already successfully done in \cite{ciallella2021arbitrary}.
 However, it is not clear what the stability and convergence properties of such combined schemes are.

\section{Appendix}\label{se_Appendix}
We perform a numerical validation that Assumptions \eqref{eq:assumption.prop.MPRK43(alpha,beta)} are always fulfilled for the general parameter selection. 
\begin{figure}[tb]
	\centering 
	\begin{subfigure}[b]{0.33\textwidth}
		\includegraphics[width=\textwidth]{%
			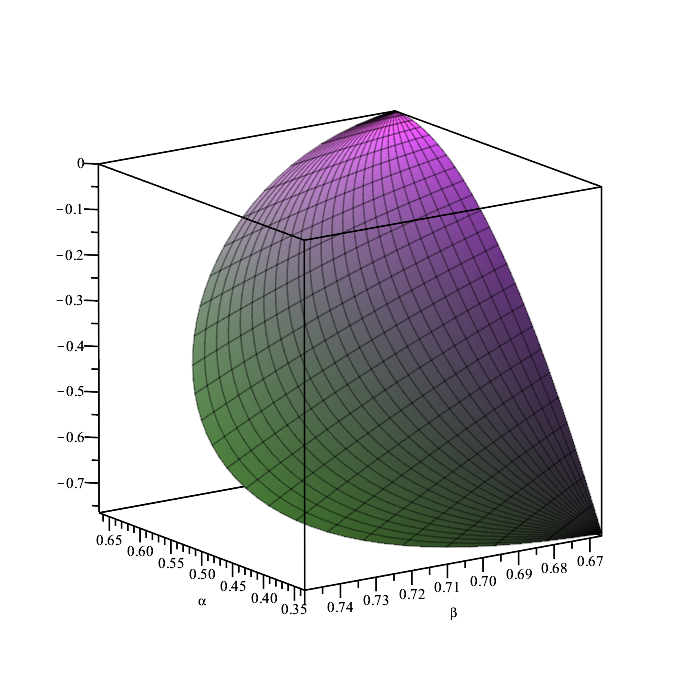} 
		\caption{$\frac13\leq \alpha\leq\frac23\leq \beta\leq 3\alpha(1-\alpha)$}
		\label{short_3_b_8}
	\end{subfigure}%
	~
	\begin{subfigure}[b]{0.33\textwidth}
		\includegraphics[width=\textwidth]{%
			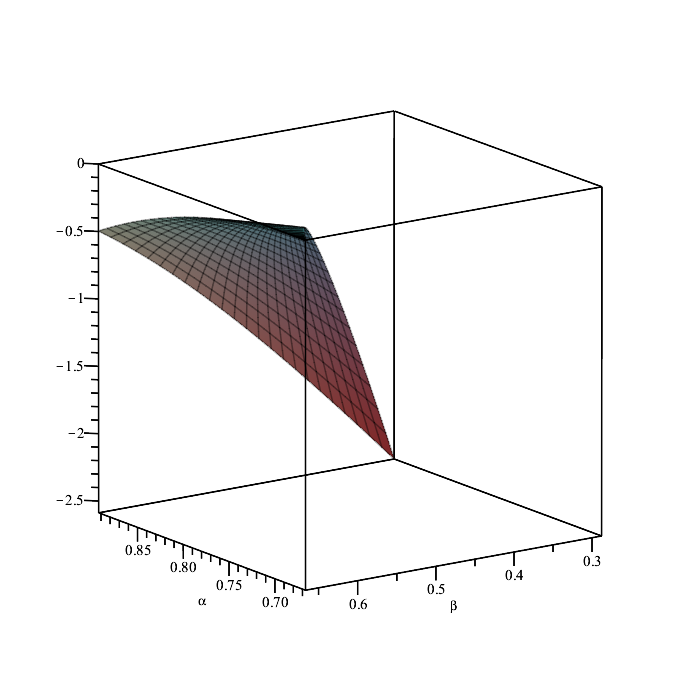} 
		\caption{$3\alpha(1-\alpha)\leq \beta \leq \frac23\leq \alpha<\alpha_0$}
		\label{short_3_b_9}
	\end{subfigure}%
	~
	\begin{subfigure}[b]{0.33\textwidth}
		\includegraphics[width=\textwidth]{%
			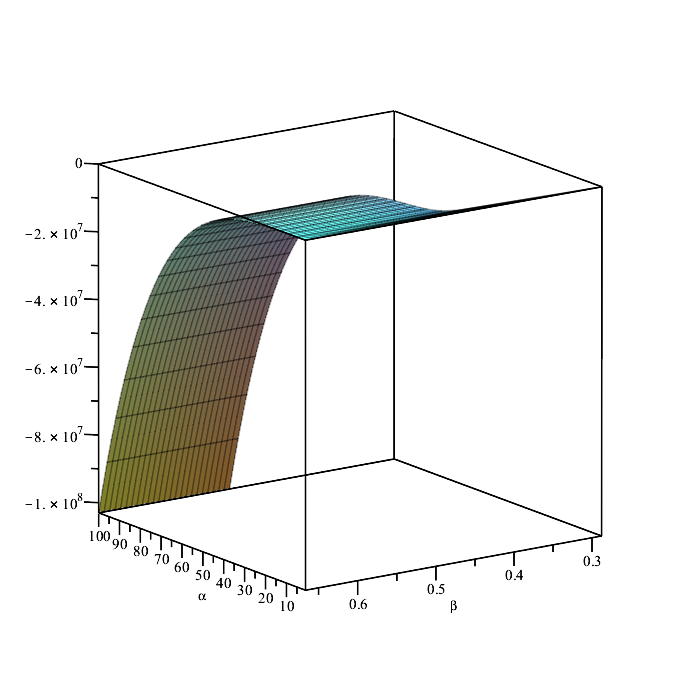} 
		\caption{$\tfrac{3\alpha-2}{6\alpha-3}\leq \beta \leq \frac23$, $\alpha>\alpha_0$}
		\label{short_3_b_10}
	\end{subfigure}%
	\caption{Plot of $(-a_1a_2+b_1b_2+a_3-b_3)(\beta-\alpha)^2$ from \eqref{eq:assumption.prop.MPRK43(alpha,beta)}.}
	\label{fig:c2}
\end{figure}

\begin{figure}[tb]
	\centering 
	\begin{subfigure}[b]{0.33\textwidth}
		\includegraphics[width=\textwidth]{%
			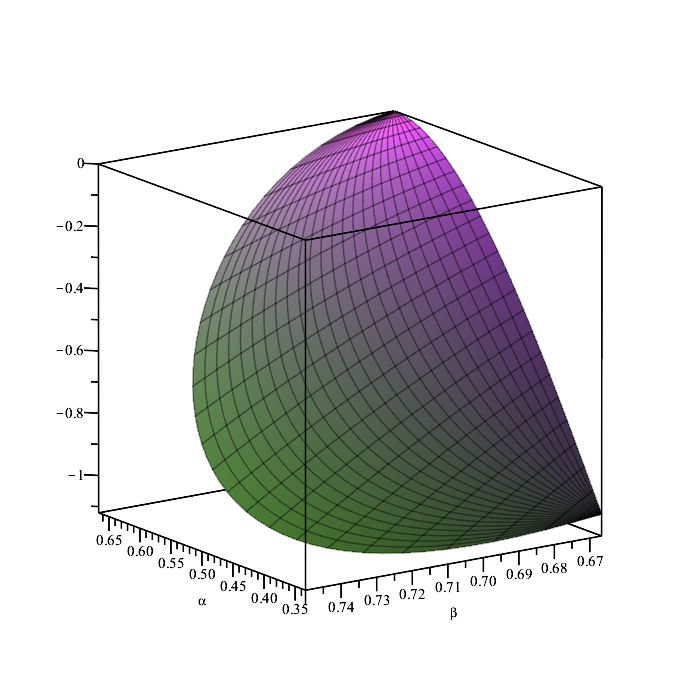} 
		\caption{$\frac13\leq \alpha\leq\frac23\leq \beta\leq 3\alpha(1-\alpha)$}
		\label{short_3_b_11}
	\end{subfigure}%
	~
	\begin{subfigure}[b]{0.33\textwidth}
		\includegraphics[width=\textwidth]{%
			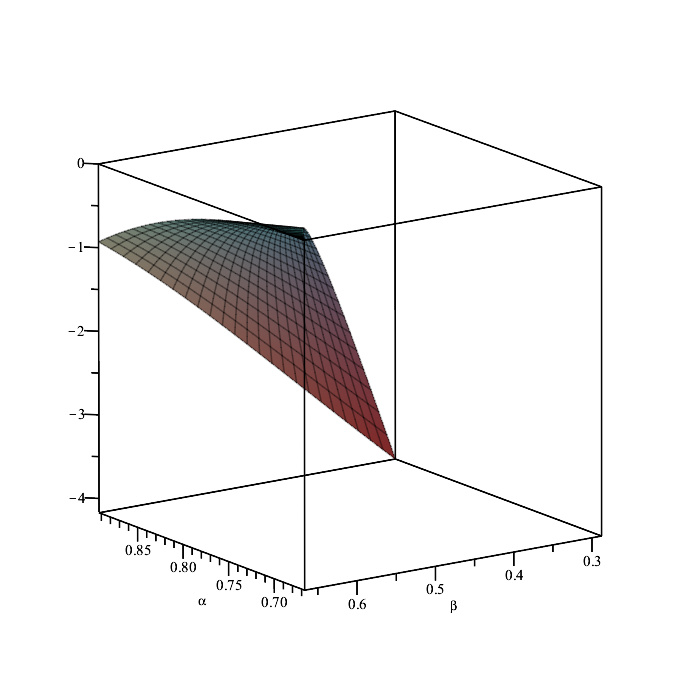} 
		\caption{$3\alpha(1-\alpha)\leq \beta \leq \frac23\leq \alpha<\alpha_0$}
		\label{short_3_b_12}
	\end{subfigure}%
	~
	\begin{subfigure}[b]{0.33\textwidth}
		\includegraphics[width=\textwidth]{%
			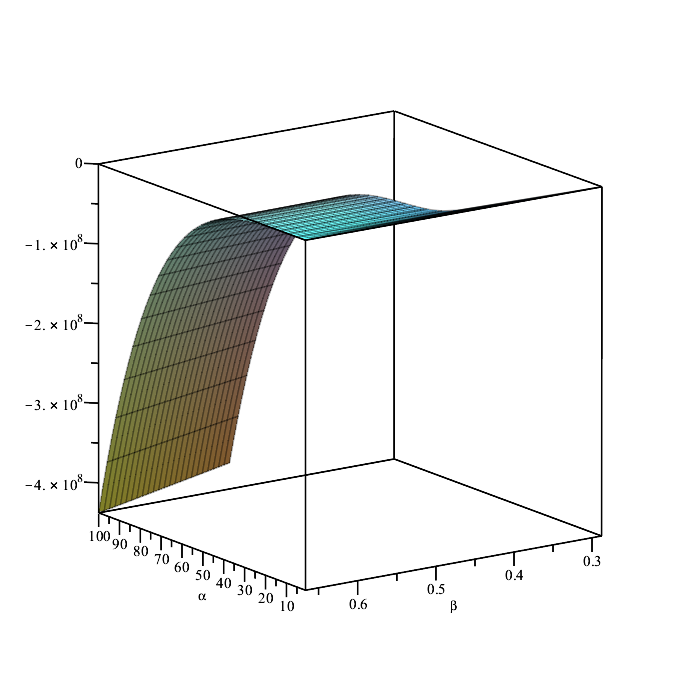} 
		\caption{$\tfrac{3\alpha-2}{6\alpha-3}\leq \beta \leq \frac23$, $\alpha>\alpha_0$}
		\label{short_3_b_13}
	\end{subfigure}%
	\caption{Plot of $(a_2^2-b_2^2-2a_4+2b_4)(\beta-\alpha)^2$ from \eqref{eq:assumption.prop.MPRK43(alpha,beta)}.}
	\label{fig:c3}
\end{figure}

\begin{figure}[tb]
	\centering 
	\begin{subfigure}[b]{0.33\textwidth}
		\includegraphics[width=\textwidth]{%
			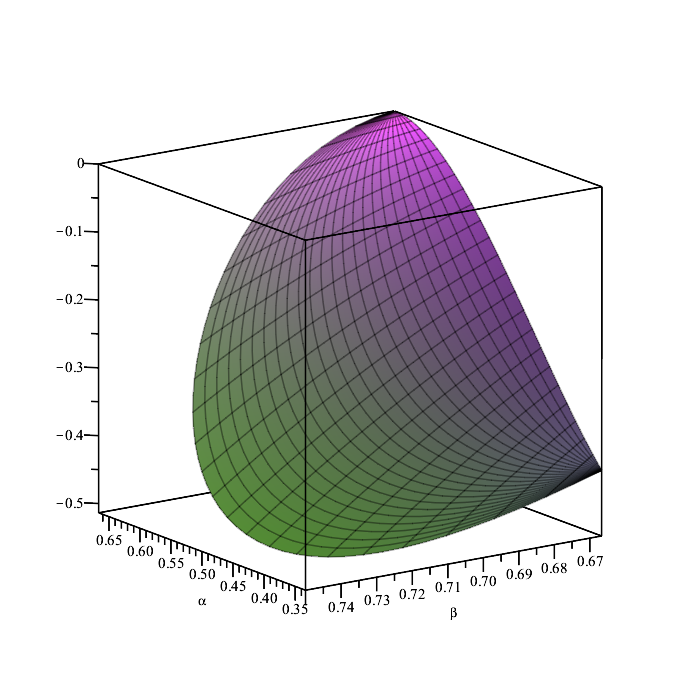} 
		\caption{$\frac13\leq \alpha\leq\frac23\leq \beta\leq 3\alpha(1-\alpha)$}
		\label{short_3_b_15}
	\end{subfigure}%
	~
	\begin{subfigure}[b]{0.33\textwidth}
		\includegraphics[width=\textwidth]{%
			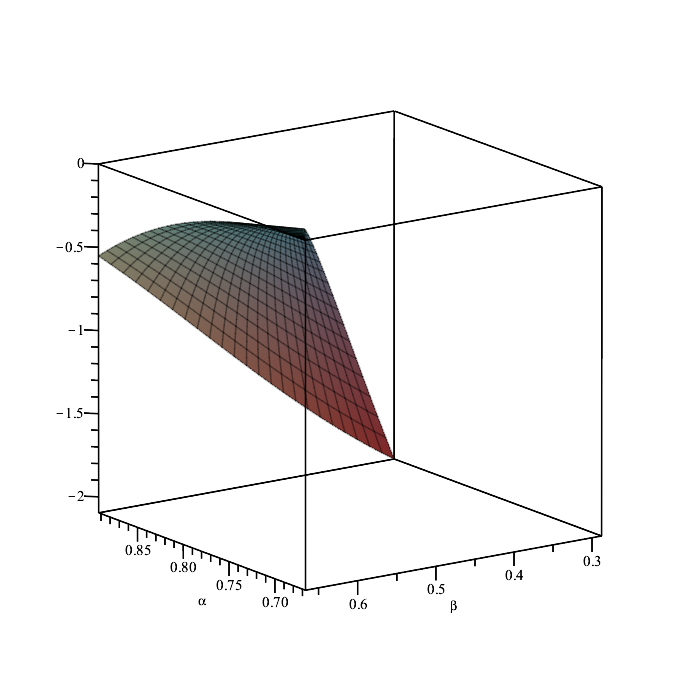} 
		\caption{$3\alpha(1-\alpha)\leq \beta \leq \frac23\leq \alpha<\alpha_0$}
		\label{short_3_b_16}
	\end{subfigure}%
	~
	\begin{subfigure}[b]{0.33\textwidth}
		\includegraphics[width=\textwidth]{%
			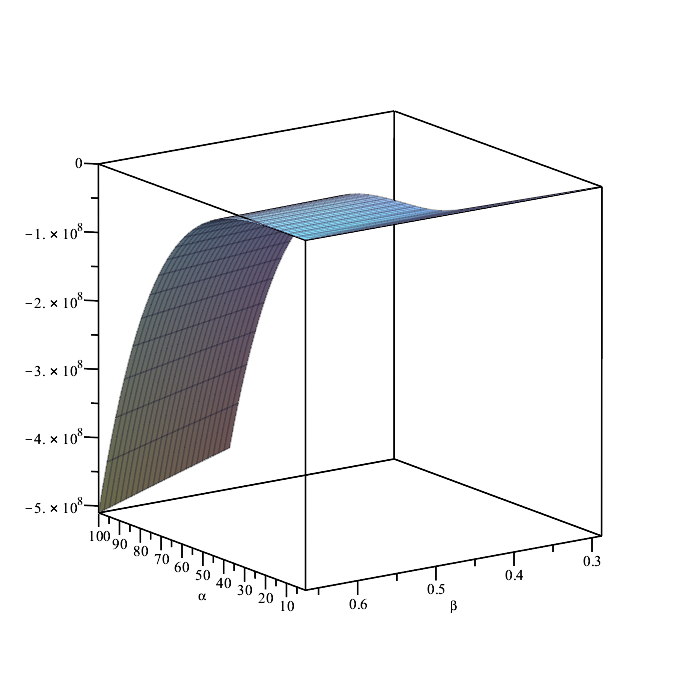} 
		\caption{$\tfrac{3\alpha-2}{6\alpha-3}\leq \beta \leq \frac23$, $\alpha>\alpha_0$}
		\label{short_3_b_17}
	\end{subfigure}%
	\caption{Plot of $(a_1a_4- a_2a_3-b_1b_4+b_2b_3)(\beta-\alpha)^2$ from \eqref{eq:assumption.prop.MPRK43(alpha,beta)}.}
	\label{fig:c4}
\end{figure}

\begin{figure}[tb]
	\centering 
	\begin{subfigure}[b]{0.33\textwidth}
		\includegraphics[width=\textwidth]{%
			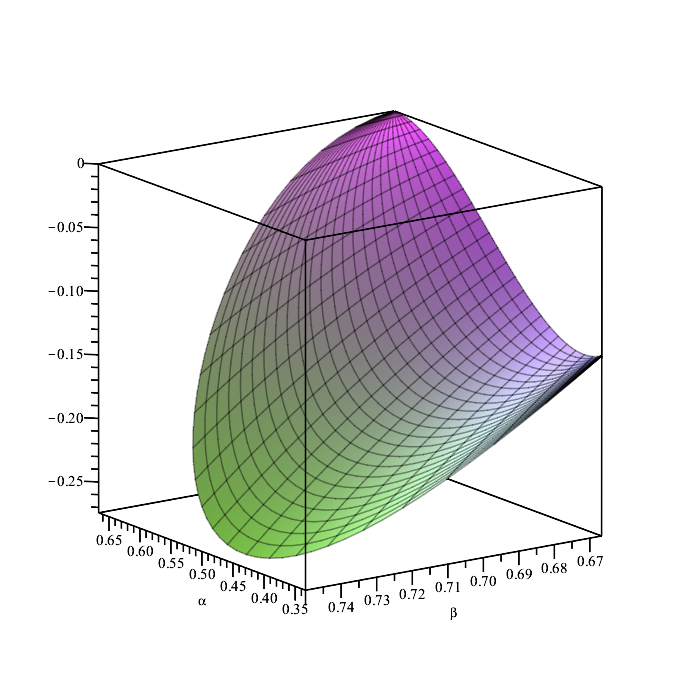} 
		\caption{$\frac13\leq \alpha\leq\frac23\leq \beta\leq 3\alpha(1-\alpha)$}
		\label{short_3_b_18}
	\end{subfigure}%
	~
	\begin{subfigure}[b]{0.33\textwidth}
		\includegraphics[width=\textwidth]{%
			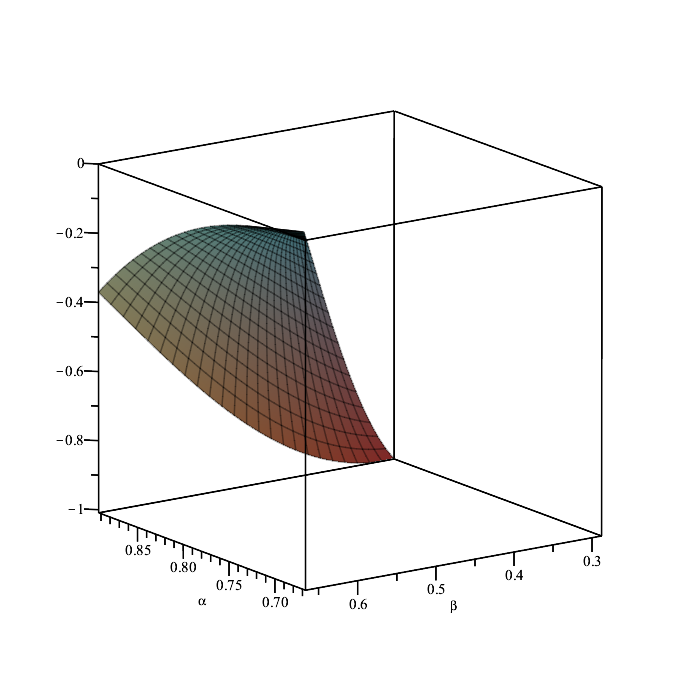} 
		\caption{$3\alpha(1-\alpha)\leq \beta \leq \frac23\leq \alpha<\alpha_0$}
		\label{short_3_b_19}
	\end{subfigure}%
	~
	\begin{subfigure}[b]{0.33\textwidth}
		\includegraphics[width=\textwidth]{%
			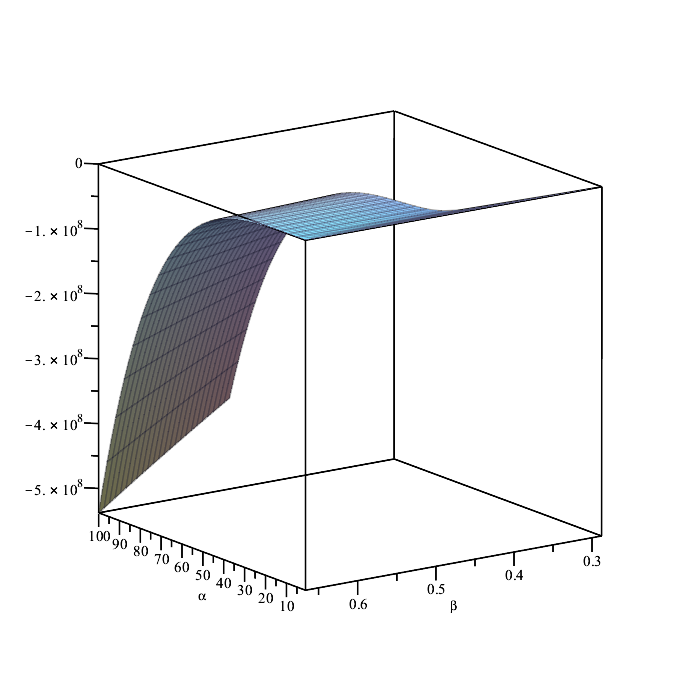} 
		\caption{$\tfrac{3\alpha-2}{6\alpha-3}\leq \beta \leq \frac23$, $\alpha>\alpha_0$}
		\label{short_3_b_20}
	\end{subfigure}%
	\caption{Plot of $(a_3^2-b_3^2)(\beta-\alpha)^2$ from \eqref{eq:assumption.prop.MPRK43(alpha,beta)}.}
	\label{fig:c5}
\end{figure}

\begin{figure}[tb]
	\centering 
	\begin{subfigure}[b]{0.33\textwidth}
		\includegraphics[width=\textwidth]{%
			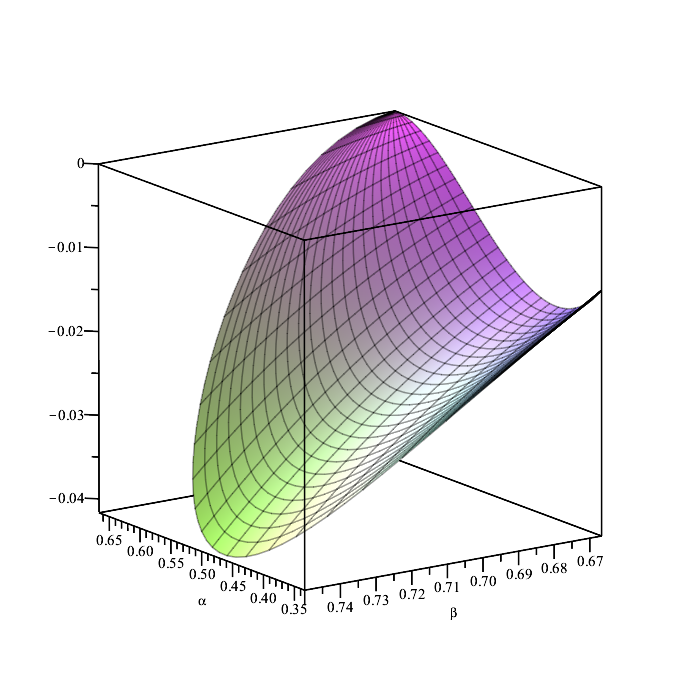} 
		\caption{$\frac13\leq \alpha\leq\frac23\leq \beta\leq 3\alpha(1-\alpha)$}
		\label{short_3_b_21}
	\end{subfigure}%
	~
	\begin{subfigure}[b]{0.33\textwidth}
		\includegraphics[width=\textwidth]{%
			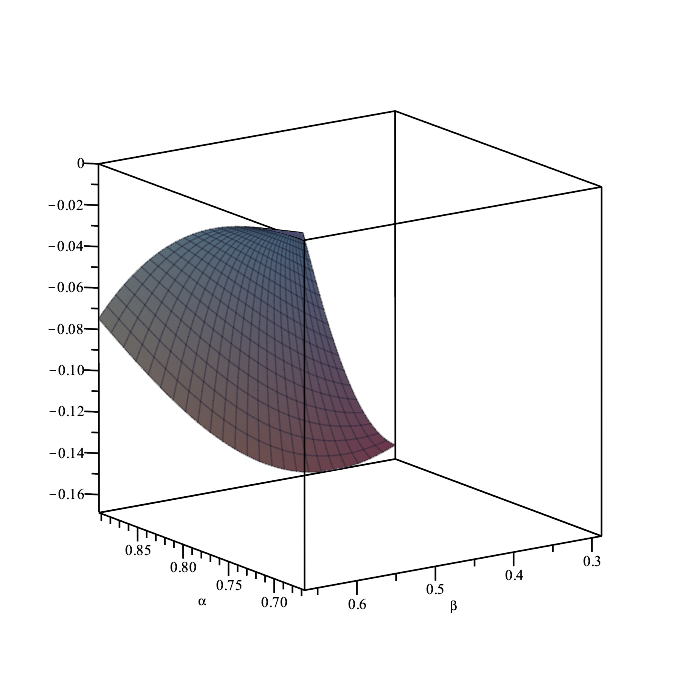} 
		\caption{$3\alpha(1-\alpha)\leq \beta \leq \frac23\leq \alpha<\alpha_0$}
		\label{short_3_b_22}
	\end{subfigure}%
	~
	\begin{subfigure}[b]{0.33\textwidth}
		\includegraphics[width=\textwidth]{%
			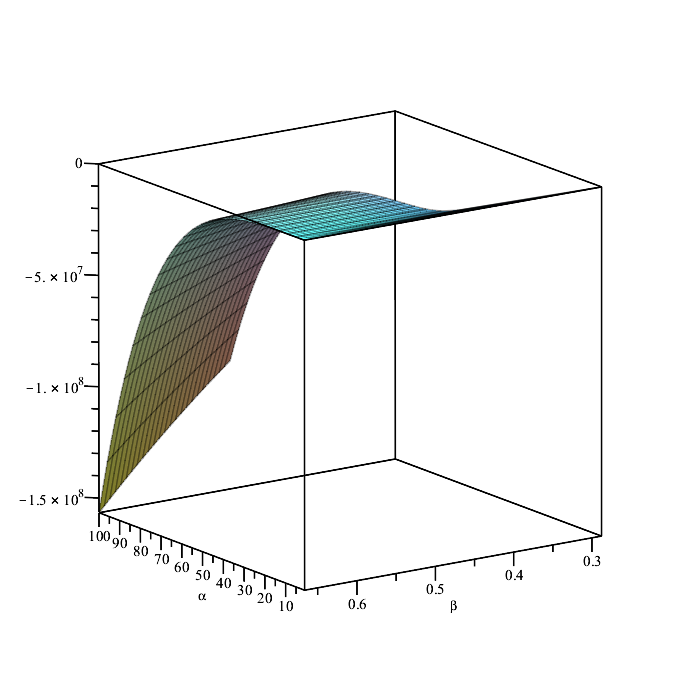} 
		\caption{$\tfrac{3\alpha-2}{6\alpha-3}\leq \beta \leq \frac23$, $\alpha>\alpha_0$}
		\label{short_3_b_23}
	\end{subfigure}%
	\caption{Plot of $(b_3b_4-a_3a_4)(\beta-\alpha)^2$ from \eqref{eq:assumption.prop.MPRK43(alpha,beta)}.}
	\label{fig:c6}
\end{figure}

\section*{Acknowledgements}
The author Th. Izgin gratefully acknowledges the financial support by the Deutsche Forschungsgemeinschaft (DFG) through grant ME 1889/10-1. P. \"Offner was supported by the Gutenberg Research College, JGU Mainz. P. \"Offner also wants to thank A. Meister (Kassel) for his invitation to Kassel.

\bibliographystyle{siamplain}
\bibliography{literature}

\end{document}